\title{The Tri-Pants Graph of the Twice-Punctured Torus}
\author{Katherine Betts}
\address{Katherine Betts: Furman University, Greenville, SC 29613}
\email{katherine.betts@furman.edu}
\author{Troy Larsen}
\address{Troy Larsen: Washington \& Lee University, Lexington, VA 24450}
\email{larsent22@mail.wlu.edu}
\author{Jeffrey Utley}
\address{Jeffrey Utley: University of Tennessee, Knoxville, TN 37996}
\email{jutley8@tennessee.edu}
\author{Avalon Vanis}
\address{Avalon Vanis: Bryn Mawr College, Bryn Mawr, PA 19010}
\email{avanis@brynmawr.edu}
\newcommand{\Z}{\mathbb{Z}}
\newcommand{\Q}{\mathbb{Q}}
\newcommand{\A}{\mathcal{A}}
\newcommand{\p}{\mathcal{P}}
\newcommand{\twicepuncturedtorus}{T^2\setminus\{\circ,\bullet\}}
\newcommand{\oncepuncturedtorus}{T^2\setminus\{\circ\}}
\newcommand{\tripantsgraph}{\mathcal{TP}}
\newcommand{\fareydual}{\mathcal{F}^*}
\newcommand{\homeoplus}{\text{Homeo}^+(\twicepuncturedtorus,\{\circ,\bullet\})}
\newcommand{\pmodtorus}{\text{PMod}(\twicepuncturedtorus)}
\newcommand{\oncepuncturedtorusblack}{T^2\setminus\{\bullet\}}
\newcommand{\Push}{\operatorname{Push}}
\newcommand{\gammahat}{\hat{\gamma}}
\newcommand{\alphahat}{\hat{\alpha}}
\newcommand{\betahat}{\hat{\beta}}
\newtheorem{thm}{Theorem}[section]
\newtheorem{lem}[thm]{Lemma}
\newtheorem{cor}[thm]{Corollary}
\newtheorem{theoremx}{Theorem}
\newenvironment{manualtheorem}[1]{%
    \manualtheoreminner
}{\endmanualtheoreminner}
\theoremstyle{definition}
\newtheorem{defn}[thm]{Definition}
\theoremstyle{remark}
\newtheorem{remark}[thm]{Remark}
\newtheorem{claim}[thm]{Claim}
\numberwithin{equation}{section}
\let\oldtocsection=\tocsection
\let\oldtocsubsection=\tocsubsection
\let\oldtocsubsubsection=\tocsubsubsection
\renewcommand{\tocsection}[2]{\hspace{0em}\oldtocsection{#1}{#2}}
\renewcommand{\tocsubsection}[2]{\hspace{2em}\oldtocsubsection{#1}{#2}}
\renewcommand{\tocsubsubsection}[2]{\hspace{3em}\oldtocsubsubsection{#1}{#2}}
\begin{document}
\begin{abstract}
We investigate the structure of the tri-pants graph, a simplicial graph introduced by Maloni and Palesi, whose vertices correspond to particular collections of homotopy classes of simple closed curves of the twice-punctured torus, called tri-pants, and whose edges connect two vertices whenever the corresponding pants differ by suitable elementary moves. In particular, by examining the relationship between the tri-pants graph and the dual of the Farey complex, we prove that the tri-pants graph is connected and it has infinite diameter.
\end{abstract}
\maketitle
\tableofcontents

\section*{Introduction}

Let $T^2$ be a fixed orientable closed surface of genus $1$, and let $\bullet, \circ$ be two distinct marked points of $T^2$. We will call the \emph{twice-punctured torus} the punctured surface $T^2 \setminus \{\circ, \bullet\}$. The aim of this paper is to study the structure of the \emph{tri-pants graph}, a simplicial graph introduced by Maloni and Palesi and defined in terms of particular sets of essential simple closed curves on the twice punctured torus, called \emph{tri-pants}. A tri-pant can be described as a collection $T = \{\alpha, \alpha', \beta, \beta', \gamma, \gamma' \}$ of six distinct homotopy classes of non-separating, essential, simple closed curves on the twice-punctured torus that satisfy the following properties:
\begin{enumerate}
    \item every pair $\{\alpha, \alpha'\}$, $\{\beta, \beta'\}$, $\{\gamma, \gamma' \}$ describes a \emph{pants decomposition} of $T^2 \setminus \{\circ, \bullet\}$, i.e. it can be represented by disjoint simple closed curves that cut the surface into two once-punctured annuli;
    \item every pair of homotopy classes of curves belonging to distinct pants decompositions have minimal intersection number equal to $1$.
\end{enumerate}
We say that two tri-pants $T$ and $T'$ \emph{differ by an elementary move} if they share two out of three pants decompositions. The tri-pants graph $\mathcal{TP}$ can then be described as the simplicial graph that has a vertex for each tri-pant of the twice-punctured torus, and has an edge joining two vertices when the corresponding tri-pants differ by an elementary move.

The tri-pants graph can be considered as a higher complexity analogue of a similar simplicial graph defined in terms of systems of curves on the once-punctured torus, namely the \emph{dual of the Farey complex} $\mathcal{F}^*$, in light of the following observations: a pants decomposition of the once-punctured torus is given by (the homotopy class of) a single essential simple closed curve; each vertex of $\mathcal{F}^*$ can be interpreted as a triple of homotopy classes of essential simple closed curves intersecting exactly once, and therefore as the data of three pants decompositions with minimal intersection; two vertices of $\mathcal{F}^*$ are connected by an edge whenever the corresponding triples of homotopy classes differ by exactly one element. The first result that we will describe concerns the topology of the tri-pants graph. In particular, we will prove:

\begin{theoremx}\label{thm: tri-pants graph connected}
    The tri-pants graph is connected.
\end{theoremx}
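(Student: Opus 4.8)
The plan is to leverage the relationship between $\tripantsgraph$ and the dual of the Farey complex $\fareydual$, whose connectivity is classical ($\fareydual$ is the trivalent tree dual to the Farey tessellation of $\mathbb{H}^2$, every one of whose edges is separating, hence a tree). The link between the two graphs is the forgetful map obtained by filling in the marked point $\bullet$, carrying $\twicepuncturedtorus$ to $\oncepuncturedtorus$. A non-separating essential simple closed curve on $\twicepuncturedtorus$ descends to an essential simple closed curve on $\oncepuncturedtorus$: it cannot become inessential, since a curve bounding a disk or a once-punctured disk is separating. Moreover, the two curves $\alpha,\alpha'$ of a pants decomposition of $\twicepuncturedtorus$ become isotopic after filling in $\bullet$ — the pair of pants of the decomposition containing $\bullet$ turns into an embedded annulus whose boundary is $\alpha\sqcup\alpha'$ — so they descend to a single slope. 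Hence a tri-pant $T=\{\alpha,\alpha',\beta,\beta',\gamma,\gamma'\}$ descends to an unordered triple of slopes $\{\bar\alpha,\bar\beta,\bar\gamma\}$ on $\oncepuncturedtorus$; since curves of $T$ from distinct pants decompositions meet exactly once, their images meet at most once, and being of pairwise distinct slopes they meet exactly once, so $\{\bar\alpha,\bar\beta,\bar\gamma\}$ is a vertex of $\fareydual$. This defines a map of vertex sets $\pitilde\colon\tripantsgraph\to\fareydual$.

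I would then record two properties of $\pitilde$, both proved by explicit curve surgery in a fundamental domain for $T^2$. First, \emph{surjectivity onto vertices}: given a Farey triple one constructs a tri-pant lying above it by choosing, for each slope, a compatible pants decomposition of $\twicepuncturedtorus$ and checking the intersection-number axioms directly. Second, \emph{edge realization}: for every edge $v\sim w$ of $\fareydual$ there is an edge $T\sim S$ of $\tripantsgraph$ with $\pitilde(T)=v$ and $\pitilde(S)=w$ — since $v$ and $w$ share two slopes and differ in the third, one starts from any tri-pant over $v$ and applies the elementary move that replaces the pants decomposition over the differing slope by one realizing the new slope, again verifying the axioms. (It is also true, and clarifying, that $\pitilde$ sends an edge of $\tripantsgraph$ to an edge of $\fareydual$ or to a single vertex, since sharing two pants decompositions forces sharing two slopes.)

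The crux is the following fiber lemma, which I expect to be the main obstacle: for every vertex $v$ of $\fareydual$, any two tri-pants in the fiber $\invpitilde(v)$ are joined by an edge-path in $\tripantsgraph$. The difficulty is that a tri-pant records more than its triple of slopes: once $\{\bar\alpha,\bar\beta,\bar\gamma\}$ is fixed, a lift to a tri-pant also records how the marked point $\bullet$ is positioned relative to the three pants decompositions, the ambiguity being governed by point-pushing of $\bullet$ (the kernel of the forgetful map on mapping class groups, a free group of rank two). The plan is to make this extra data explicit — for instance via winding parameters of the three pairs of curves about $\bullet$ — and to show that altering it by the elementary point-pushing generators is realized by elementary moves, so that $\invpitilde(v)$ is connected. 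Granting this, connectivity of $\tripantsgraph$ follows: for tri-pants $T,T'$, choose by connectivity of $\fareydual$ a Farey edge-path $\pitilde(T)=v_0\sim v_1\sim\cdots\sim v_n=\pitilde(T')$, realize each $v_i\sim v_{i+1}$ by an edge of $\tripantsgraph$ via edge realization, and splice these successive edges to one another — and to $T$ at the start and to $T'$ at the end — using paths inside the fibers $\invpitilde(v_i)$ furnished by the fiber lemma; the resulting concatenation is an edge-path in $\tripantsgraph$ from $T$ to $T'$.
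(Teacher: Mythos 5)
Your proposal follows essentially the same route as the paper: project $\tripantsgraph$ onto $\fareydual$ by filling in $\bullet$, use surjectivity of the projection together with connectivity of $\fareydual$, and splice lifted edges through the fibers, whose connectivity is the real content (the paper's Theorem~\ref{thm: fibers connected}). Your sketch of the fiber lemma — the ambiguity over a fixed Farey vertex is governed by point-pushing of $\bullet$ (the kernel of the Birman exact sequence), and point-pushes are realized by sequences of elementary moves (big flips) — is exactly how the paper proves it, via the Push homomorphism, the change-of-coordinates principle, and the fact that push maps act on tri-arcs by big flips.
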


\noindent Equivalently, every pair of tri-pants of the twice-punctured torus is connected by a finite sequence of elementary moves.

In fact, we can provide a more detailed description of the structure of the tri-pants graph in terms of a natural projection $\pi : \tripantsgraph \to \mathcal{F}^*$ onto the dual of the Farey complex. Let $\eta : \twicepuncturedtorus \to T^2 \setminus \{\circ\}$ denote the natural inclusion map. It turns out that, if $T = \{\alpha, \alpha', \beta, \beta', \gamma, \gamma' \}$ is a tri-pant, then the images of the curves in $T$ under $\eta$ provide a triple of essential simple closed curves of the once-punctured torus that intersect exactly once, up to homotopy. In particular, there is a natural projection from the set of vertices of $\tripantsgraph$ into $\mathcal{F}^*$. As we will describe in Section 3, adjacent vertices of the tri-pants graph are mapped under the projection onto vertices of $\mathcal{F}^*$ that are either equal, or adjacent. This allows us to construct a continuous surjective map $\pi : \tripantsgraph \to \mathcal{F}^*$ between the graphs, naturally associated to the inclusion $\eta : \twicepuncturedtorus \to T^2 \setminus \{\circ\}$. We will deduce Theorem \ref{thm: tri-pants graph connected} from the following result:

\begin{theoremx}\label{thm: fibers connected}
    For every vertex $v$ of $\mathcal{F}^*$, the fiber $\pi^{-1}(v)$ is a connected subgraph of $\mathcal{TP}$.
\end{theoremx}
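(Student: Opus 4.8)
The plan is to fix a vertex $v=\{a,b,c\}$ of $\fareydual$ and to connect every tri-pant lying over $v$, by elementary moves that never leave the fiber, to one fixed \emph{standard} tri-pant $T_v$ associated to $v$. The first point is to understand $\pi^{-1}(v)$ concretely. If $\{\alpha,\alpha'\}$ is a pants decomposition appearing in a tri-pant $T$, then cutting along it produces two once-punctured annuli, one containing $\circ$ and one containing $\bullet$; filling $\bullet$ turns the latter into an honest annulus with boundary $\alpha\cup\alpha'$, so $\eta(\alpha)$ and $\eta(\alpha')$ are isotopic. Hence $\pi(T)$ is precisely the triple $\{\eta(\alpha),\eta(\beta),\eta(\gamma)\}$, and $T\in\pi^{-1}(v)$ exactly when this triple equals $\{a,b,c\}$. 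In other words, a tri-pant over $v$ is the data of a lift of each of $a,b,c$ to a pants decomposition of $\twicepuncturedtorus$, subject to the mutual intersection-number-one conditions.

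Next I would parametrize $\pi^{-1}(v)$ using point-pushing. The puncture-forgetting homomorphism $\pmodtorus\to\mathrm{Mod}(\oncepuncturedtorus)$ fits into a Birman exact sequence whose kernel is the point-pushing subgroup $\mathcal{P}\cong\pi_1(\oncepuncturedtorus)\cong F_2$; since point-pushes preserve $\eta$-images, $\mathcal{P}$ acts on $\pi^{-1}(v)$. I would then invoke the change-of-coordinates principle — any two tri-pants are taken to one another by a mapping class of $\twicepuncturedtorus$ — together with the fact that the stabilizer of an ordered Farey triangle in $\mathrm{Mod}(\oncepuncturedtorus)$ acts trivially on isotopy classes of curves (it is generated by the hyperelliptic involution): this shows that, up to that involution, $\pi^{-1}(v)$ is a single $\mathcal{P}$-orbit, so that fixing $T_v$ yields a surjection $\mathcal{P}\to\pi^{-1}(v)$. (This also explains why each fiber is infinite, matching the infinite-diameter statement.)

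The heart of the argument is to make a free generating pair $\{x,y\}$ of $\mathcal{P}\cong F_2$, realized by point-pushes along simple loops, \emph{visible} inside the fiber: each of $x^{\pm1}\cdot T_v$ and $y^{\pm1}\cdot T_v$ should be joinable to $T_v$ by a path of elementary moves all of which stay in $\pi^{-1}(v)$. To do this I would use the standard identity $\Push(\delta)=T_{\delta_L}T_{\delta_R}^{-1}$ and track, one pants decomposition at a time, how such a point-push transforms a tri-pant, checking at each intermediate stage that the intersection-one conditions still hold and that the $\eta$-images are unchanged, so that each intermediate configuration is again a tri-pant in $\pi^{-1}(v)$. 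By $\mathcal{P}$-equivariance, $\pi^{-1}(v)$ then contains a subdivided copy of the Cayley graph of $(\mathcal{P},\{x,y\})$ and is therefore connected; equivalently, one runs an induction on the word length in $\mathcal{P}$ of the element carrying $T_v$ to a given $T$, showing it is strictly decreased by a suitable fiber-preserving elementary move.

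I expect this last step to be the main obstacle: proving that the fiber-preserving elementary moves at hand are rich enough, i.e. that one never has to leave $\pi^{-1}(v)$ to make progress toward $T_v$. Concretely this reduces to the following local question: with two of the three lifted pants decompositions of a tri-pant held fixed — say those over $a$ and $b$ — which pants decompositions over $c$ complete them to a tri-pant in the fiber, and is the resulting ``flip graph'' on these completions connected? I would attack this by analyzing the arcs cut on the two once-punctured annuli $P_\circ,P_\bullet$ (the pieces obtained by cutting $\twicepuncturedtorus$ along the $a$-pants) by the remaining curves of the tri-pant, and carrying out the corresponding intersection-number bookkeeping; this is where the bulk of the technical work will lie.
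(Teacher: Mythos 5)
Your overall strategy is the same as the paper's: both arguments identify the fiber-preserving moves with big flips, and both use change of coordinates together with the Birman exact sequence to recognize that the ambiguity within a fiber is governed by the point-pushing subgroup $\pi_1(\oncepuncturedtorus)\cong F_2$. However, as written your proposal has a genuine gap exactly where you flag ``the main obstacle'': the claim that a point-push along a generating loop can be joined to the identity through \emph{fiber-preserving elementary moves} is never established, only announced as ``intersection-number bookkeeping.'' This is not a routine verification that can be waved at; it is the technical heart of the theorem. In the paper it is carried out via the tri-arc presentation: one first proves the algebraic formula for a big flip on an oriented tri-arc ($c=xy$ becomes $c'=[y,x]c$, Lemma~\ref{conj: cyclicperm}), and then checks explicitly (Lemma~\ref{lem: Push, big flips}) that pushing $\circ$ along each of the four generating loops $\tilde\alpha^{\pm1},\tilde\beta^{\pm1}$ transforms a tri-arc by precisely two big flips, so that an arbitrary $\Push(\theta)$ acts by a finite sequence of big flips. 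Without some version of this computation your Cayley-graph/word-length induction has no base case, so the proof is incomplete.

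A second, smaller issue is your structural claim that $\pi^{-1}(v)$ is a single orbit of the point-pushing subgroup (up to the hyperelliptic involution), deduced from transitivity of the mapping class group on tri-pants plus triviality of the stabilizer of an ordered Farey triangle. Neither ingredient is justified: change of coordinates gives transitivity on \emph{pairs} of curves (or arcs) intersecting once, not on full tri-pants, and you would still have to check that the connecting mapping class can be chosen in $\pmodtorus$ and projects into the stabilizer. The paper sidesteps this entirely: given $T,T'$ in the same fiber, it applies change of coordinates only to \emph{two} of the three arcs, observes that the resulting homeomorphism induces the identity on $\pi_1(T^2,\bullet)$ and hence lies in $\operatorname{Image}(\Push)=\operatorname{Ker}(\mathrm{Forget})$, so $T_*$ is carried by big flips onto a tri-arc sharing two arcs with $T'_*$; the third arcs have the same image in $T^2$, and since a small flip would change the vertex of $\fareydual$, they are either equal or differ by one further big flip. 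If you replace your orbit claim by this two-arc argument and supply the explicit push-acts-by-big-flips computation, your proof closes up and essentially coincides with the paper's.
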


Our proof of Theorem \ref{thm: fibers connected} relies on the properties of the \emph{pure mapping class class group} $\text{PMod}(\twicepuncturedtorus)$, the group of homeomorphisms of the torus that fix the set $\{\circ, \bullet\}$ pointwise, up to isotopy relative to the punctures. In particular, the main technical tool that we will deploy is the Birman's exact sequence \cite{birman1969} (see also \cite{MC}), which relates the pure mapping class group of the twice-punctured torus to the mapping class group of the once-punctured torus.

Theorem \ref{thm: fibers connected} allows us to deduce information also about the metric structure of the tri-pants graph. If $T$ and $T'$ are two vertices of $\mathcal{TP}$, we define their distance to be the minimum number of elementary moves required to go from $T$ to $T'$. With respect to such metric, we will show the following:

\begin{theoremx}\label{thm: infinitediameter}
    The tri-pants graph has infinite diameter.
\end{theoremx}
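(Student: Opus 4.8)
The plan is to deduce the infinite diameter of $\tripantsgraph$ from the infinite diameter of $\fareydual$ via the projection $\pi : \tripantsgraph \to \fareydual$. Recall from Section 3 that $\pi$ sends each vertex of $\tripantsgraph$ to a vertex of $\fareydual$, and that two adjacent vertices of $\tripantsgraph$ are sent to vertices of $\fareydual$ that are either equal or adjacent. Consequently, any edge-path in $\tripantsgraph$ of length $k$ from a tri-pant $T$ to a tri-pant $T'$ projects to an edge-walk in $\fareydual$ of length at most $k$ from $\pi(T)$ to $\pi(T')$; in other words, $\pi$ does not increase distances on vertex sets,
\[
    d_{\fareydual}\big(\pi(T),\pi(T')\big)\ \le\ d_{\tripantsgraph}(T,T')
    \qquad\text{for all vertices } T,\,T' \text{ of } \tripantsgraph,
\]
and moreover $\pi$ is surjective.

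Next I would record that $\fareydual$ has infinite diameter. Being the dual graph of the Farey tessellation of $\mathbb{H}^2$, it is isomorphic to the infinite trivalent tree: every edge of the Farey complex is a bi-infinite geodesic that separates $\mathbb{H}^2$, so every edge of $\fareydual$ is separating, and a connected graph all of whose edges separate is a tree; since $\fareydual$ is infinite and every vertex has degree exactly $3$ (distinct ideal triangles of the Farey tessellation share at most one edge), it has no leaves, and hence it has infinite diameter. Equivalently, one may simply invoke the standard fact that the Farey graph, and therefore its dual, has infinite diameter.

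Combining the two observations gives the theorem at once. Fix $N \in \N$. Since $\fareydual$ has infinite diameter, choose vertices $v, v'$ of $\fareydual$ with $d_{\fareydual}(v,v') > N$, and, using surjectivity of $\pi$, pick vertices $T \in \pi^{-1}(v)$ and $T' \in \pi^{-1}(v')$ of $\tripantsgraph$. Then
\[
    d_{\tripantsgraph}(T,T')\ \ge\ d_{\fareydual}\big(\pi(T),\pi(T')\big)\ =\ d_{\fareydual}(v,v')\ >\ N.
\]
As $N$ was arbitrary, the tri-pants graph has infinite diameter; combined with Theorem \ref{thm: tri-pants graph connected} this shows all distances in $\tripantsgraph$ are finite but unbounded.

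I do not expect a genuine obstacle in this argument: it uses only that $\pi$ is $1$-Lipschitz on vertices and surjective, both of which are built into the construction of $\pi$ in Section 3, and it does not even require the fiber-connectedness of Theorem \ref{thm: fibers connected}. The one point deserving care is the verification that an elementary move between tri-pants never increases Farey distance — precisely the assertion that adjacent tri-pants project to equal or adjacent vertices of $\fareydual$ — which is exactly what is established when $\pi$ is defined; once that is in hand the rest is immediate.
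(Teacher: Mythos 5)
Your proposal is correct and follows essentially the same route as the paper: both arguments use that $\pi$ sends edges of $\tripantsgraph$ to edges or vertices of $\fareydual$ (so distances do not increase under projection), together with the surjectivity of $\pi$ and the infinite diameter of the tree $\fareydual$, to pull back arbitrarily far-apart vertices. Your direct Lipschitz-inequality phrasing is a slightly cleaner packaging of the paper's proof by contradiction, and your observation that Theorem~\ref{thm: fibers connected} is not needed here is accurate, but the underlying argument is the same.
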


Both Theorems \ref{thm: tri-pants graph connected} and \ref{thm: infinitediameter} extend properties that are well known for the dual of the Farey complex to the case of the tri-pants graph. Additional analogies can be drawn between the two graphs, in particular in relation to the topological interpretation of their edges. 

A vertex of the dual of the Farey complex can be interpreted as the data of an \emph{ideal triangulation} of the once-punctured torus $T^2 \setminus \{ \circ \}$, namely a triangulation of $T^2$ with exactly one vertex at the puncture $\circ$. Two vertices of $\mathcal{F}^*$ are then connected by an edge if and only if the associated ideal triangulations differ by a diagonal exchange. A similar while more sophisticated phenomenon happens also for the tri-pants graph. The data of a tri-pant turns out to be equivalent to a system of three homotopy classes of essential simple arcs on the twice-punctured torus based at the puncture $\bullet$ and satisfying certain topological constraints. In the study of $\mathcal{TP}$, we will observe that two tri-pants $T$ and $T'$ differ by an elementary move if and only if their corresponding tri-arcs differ by suitable elementary flips. In this case, the elementary flips can be divided into two types in relation to the map $\pi$: flips between the tri-arcs associated to two adjacent tri-pants $T$ and $T'$ are called \emph{big flips} when $\pi(T) = \pi(T')$, while are called \emph{small flips} when $\pi(T)$ and $\pi(T')$ are distinct adjacent vertices in $\mathcal{F}^*$. Theorem \ref{thm: fibers connected} can then equivalently phrased as: every pair of tri-pants $T$ and $T'$ lying in the same fiber of $\pi$ have associated tri-arcs that differ by a finite sequence of big flips. 

As a major difference between the two graphs and their topological structure, we will show that $\mathcal{TP}$ has non-trivial simple closed loops, phenomenon that does not occur in the dual of the Farey complex, which is a infinite tri-valent tree. As a bi-product of our study, we will also provide an alternative characterization of the notion of tri-pant:

\begin{theoremx}\label{thm: tripants and maximal collections}
   A set of homotopy classes of non-separating essential simple closed curves on the twice-punctured torus that intersect at most once is maximal with respect to the inclusion if and only if it is a tri-pant.
\end{theoremx}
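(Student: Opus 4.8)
The plan is to prove the two implications separately. For the easier direction, suppose $T = \{\alpha, \alpha', \beta, \beta', \gamma, \gamma'\}$ is a tri-pant; I must show it is maximal among collections of non-separating essential simple closed curves pairwise intersecting at most once. So let $\delta$ be any such curve with $i(\delta, c) \le 1$ for all $c \in T$, and suppose toward a contradiction that $\delta \notin T$. The key observation is that $\delta$ cannot be disjoint from both curves of two different pants pairs: if $i(\delta, \alpha) = i(\delta, \alpha') = 0$, then $\delta$ lies in the complement of the pants decomposition $\{\alpha, \alpha'\}$, which consists of two once-punctured annuli, so $\delta$ is homotopic to $\alpha$, to $\alpha'$, or to a peripheral curve — but peripheral curves are not non-separating essential simple closed curves of the type allowed, contradiction (or gives $\delta \in \{\alpha,\alpha'\}$). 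Hence for at least two of the three pants pairs, $\delta$ intersects at least one curve of that pair. Using the Birman exact sequence / the projection $\eta$ to the once-punctured torus: the images $\eta(\alpha) = \eta(\alpha')$, $\eta(\beta)=\eta(\beta')$, $\eta(\gamma)=\eta(\gamma')$ form a Farey triple (pairwise intersecting once), and $\eta(\delta)$ intersects each at most once. But in the once-punctured torus, a curve intersecting each of a Farey triple at most once must be one of the three — a standard fact about the Farey graph (slopes $p/q$ with the requisite property). So $\eta(\delta) = \eta(\alpha)$, say. Then $\delta$ and $\alpha$ project to the same slope, so they differ by an element of $\ker(\eta_*) = \pi_1(\text{once-punctured torus})$ generated by the point-pushing subgroup; I then argue that among the lifts of a given slope to the twice-punctured torus, the constraint $i(\delta,\beta)\le 1$, $i(\delta,\gamma)\le 1$ forces $\delta \in \{\alpha, \alpha'\}$, contradicting $\delta \notin T$.

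For the converse, suppose $S$ is a maximal collection of non-separating essential simple closed curves pairwise intersecting at most once; I must show $S$ is a tri-pant. First I would establish that $S$ is finite and in fact has exactly six elements, and that it projects under $\eta$ to at most three slopes. The point is that if $\eta(S)$ contained four or more distinct slopes, two of them would be non-adjacent in the Farey graph (since each vertex of $\fareydual$ — i.e., each Farey triangle — has only three vertices), giving two curves in $S$ with intersection number at least $2$ downstairs, hence at least $2$ upstairs, a contradiction. So $\eta(S)$ is contained in a single Farey triangle $\{a, b, c\}$. Next, for each slope, say $a$, I count how many curves of $S$ can lie over it: two curves over the same slope $a$ are disjoint in the once-punctured torus and intersect at most once upstairs; a point-pushing analysis (again via Birman) shows the curves over slope $a$ that pairwise intersect at most once and individually intersect the curves over $b$ and $c$ at most once form a set of size at most $2$, realized by a genuine pants pair $\{\alpha, \alpha'\}$. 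Then maximality forces all three slopes to appear and each with the full pair, so $S = \{\alpha, \alpha', \beta, \beta', \gamma, \gamma'\}$; checking that the pairs from distinct slopes intersect exactly once (not zero) — which follows because the slopes are pairwise Farey-adjacent so the downstairs intersection is already $1$ — shows $S$ satisfies the defining properties of a tri-pant.

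The main obstacle I anticipate is the point-pushing bookkeeping in both directions: precisely controlling which curves on the twice-punctured torus lie over a given slope of the once-punctured torus, and showing the "intersects the other two pairs at most once" condition cuts this set down to exactly the two curves of a pants pair. This is where the Birman exact sequence enters essentially — I would model $\ker(\eta_*)$ as the point-pushing subgroup $\Push(\pi_1(\oncepuncturedtorusblack))$ and translate the intersection-number constraints into combinatorial conditions on how far a push can "drag" a curve across $\alpha$ relative to the other curves. I expect that once the right normal form for curves over a fixed slope is set up (likely parametrized by an integer recording the push around the puncture $\circ$), the inequalities $i(\delta,\beta) \le 1$ and $i(\delta,\gamma)\le 1$ will pin down that integer to two values, matching $\alpha$ and $\alpha'$. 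A secondary, more routine obstacle is dispensing with peripheral and separating curves cleanly, and confirming that the downstairs intersection number of $1$ lifts to exactly $1$ (not more) upstairs for the curves of a tri-pant, which should follow from the definition together with the fact that $\eta$ is $\pi_1$-injective on the relevant subsurfaces.
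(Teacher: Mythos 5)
Your first direction (tri-pant $\Rightarrow$ maximal) is sound in outline, though heavier than necessary: once you know $\eta_*(\delta)$ must be one of the three slopes of the triple, the soft annulus argument of Lemma~\ref{lem: cardleq2} (at most two members of a pairwise-at-most-once family can project to the same slope of the torus) already forces $\delta\in\{\alpha,\alpha'\}$, so the point-pushing normal form you plan to develop is not needed there; the paper gets this direction even more cheaply, directly from the cardinality bound of Lemma~\ref{lem: leq6}.

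The genuine gap is in the converse. You write that ``maximality forces all three slopes to appear and each with the full pair,'' but maximality only says $S$ cannot be enlarged; to conclude that a maximal collection has six curves you must prove that every admissible collection with fewer than six curves \emph{can} be enlarged, i.e.\ you must construct the extending curve. That existence step is the actual content of the paper's Lemma~\ref{lem: geq6}, and it is missing from your sketch: the point-pushing bookkeeping you describe is aimed at the uniqueness statement (at most two curves over a fixed slope compatible with the rest), not at producing a compatible partner. The hardest configuration is $S=\{\alpha,\alpha',\beta,\beta',\gamma\}$ with $\gamma$ meeting each of the other four exactly once: the partner of $\gamma$ is not unique among curves over its slope (candidates differ by twists in the complementary four-holed sphere), and one must exhibit one that meets $\alpha,\alpha',\beta,\beta'$ exactly once. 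The paper does this by finding an embedded path $K$ from $\hat{\gamma}$ to a puncture, disjoint from the other representatives (a finite case check in the square obtained by cutting along $\hat{\alpha},\hat{\beta}$), and taking the boundary of a regular neighborhood of $\hat{\gamma}\cup K$; the configurations with only one or two slopes present are handled by extending the corresponding arcs to a tri-arc through the bijection $\Phi$ of Lemma~\ref{lem: bijection}. Without an argument of this kind, your assertion that the admissible curves over a given slope are ``realized by a genuine pants pair'' compatible with the specific curves already in $S$ is unsupported, and the direction ``maximal $\Rightarrow$ tri-pant'' does not close.
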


In particular, every set of homotopy classes of non-separating essential simple closed curves on the twice-punctured torus that intersect at most once has cardinality $\leq 6$, and it is equal to $6$ if and only if it is a tri-pant.

\vspace{1em}

An underlying motivation for the introduction of the tri-pants graph and the investigation of its properties comes from the connection between the notion of tri-pant and the character variety of $\mathbb{F}_3 \cong \pi_1(T^2 \setminus \{\circ, \bullet \})$, the free group in $3$ generators, into $\textrm{PSL}(2,\mathbb{C})$. In particular, the tri-pants graph constitutes a promising analogue of the decorated versions of the complex of curves deployed in the works of Bowditch, Maloni, Palesi, Tan, and Yang (see \cite{bowditch1998}, \cite{M-P}, \cite{maloni2015}, \cite{tan2008}, \cite{maloni2018typepreserving}) in the study of the action of the mapping class group of $S$ on the $\textrm{PSL}(2,\mathbb{C})$-character variety $\mathfrak{X}(\pi_1(S),\textrm{PSL}(2,\mathbb{C}))$, for $S$ equal to the once-punctured torus, the four-holed sphere, and the three-holed punctured plane.

\vspace{1em}

\textbf{Outline of the paper.} We rely on numerous definitions and structures to analyze the tri-pants graph $\tripantsgraph$. In Section~\ref{sec: prelims} we provide preliminary insights on closed curves, proper arcs, and their respective homotopies, outlining some of their properties and introducing a standard notation. 

Section~\ref{sec:pants} is dedicated to the study of pants decompositions of the twice-punctured torus, and the notion of tri-pant. Throughout our exposition, a pants decomposition will always be assumed to be composed of \emph{non-separating} simple closed curves. We then describe the notion of {\em tri-arcs} of the twice-punctured torus in Section~\ref{sec: tri-arc}, and we illustrate a one-to-one correspondence between tri-pants and tri-arcs in Lemma~\ref{lem: bijection}. Section \ref{sec: maximality} will then focus on the characterization of tri-pants as maximal families of non-separating simple closed curves that intersect at most once, as described by Theorem \ref{thm: tripants and maximal collections}.

The study of the tri-pants graph and the proofs of Theorems \ref{thm: fibers connected}, \ref{thm: tri-pants graph connected}, and \ref{thm: infinitediameter} are the central subjects of Section \ref{sec: graphs}. We first recall the main properties of the Farey complex and its dual graph $\mathcal{F}^*$ in Section \ref{subsec:farey}. Section 3 focuses on the investigation of elementary moves between tri-pants and the construction of the tri-pants graph $\mathcal{TP}$. By means of the bijective correspondence between tri-arcs and tri-pants described by Lemma~\ref{lem: bijection}, we characterize elementary moves in terms of small and big flips between tri-arcs (see Section \ref{subsubsec:elementary moves}). We show that every vertex of $\mathcal{TP}$ has degree 9 in Lemma \ref{lem: nineadjacent}, and introduce the projection map $\pi : \mathcal{TP} \to \mathcal{F}^*$ in Section \ref{sec: pi}. In the final part of the paper (Section \ref{sec: connect}) we recall the properties of the pure mapping class group of the twice-punctured torus and its associated Birman's exact sequence, leading us to the proof of the connectedness of the fibers of the map $\pi$ (Theorem \ref{thm: fibers connected}). We conclude our exposition by showing that the tri-pants graph is connected (Theorem \ref{thm: tri-pants graph connected}) and it has infinite diameter (Theorem \ref{thm: infinitediameter}).

\vspace{1em}


\textbf{Acknowledgements.} This work was conducted as part of the Topology REU at the University of Virginia in Summer 2021. We extend our sincerest gratitude to Filippo Mazzoli for suggesting and supervising this project and for helping us in writing this manuscript, and to Ross Akhmechet and Alec Traaseth for their mentorship. We would like to thank Sara Maloni and Fr\'ed\'eric Palesi for sharing their work on the topic and an outline for the first theorems. We are also grateful for the generous support of the National Science Foundation DMS-1839968 (NSF RTG), DMS-1848346 (NSF CAREER), and Thomas Koberda’s Alfred P. Sloan Foundation Fellowship.

\section{Preliminaries}\label{sec: prelims}
\subsection{Curves and arcs on the twice-punctured torus}
In this paper, we focus on how curves behave within punctured surfaces. We define a surface with punctures to be as follows.
\begin{defn} [Surface with Punctures]
Let $\overline{S}$ be a compact, connected, orientable surface, possibly with boundary, and let $P$ be a finite set of marked points in the interior of $\overline{S}$. Then $S = \overline{S}\setminus P$ is a {\em surface with punctures} and is often also denoted $(\overline{S},P)$.
\end{defn}
Throughout the remainder of this paper, we will use the term ``surface" to mean a compact, orientable, connected surface possibly with punctures and with (possibly empty) boundary. We define homotopy of closed curves in the following way, in order to specify how $\alpha$ and $\beta$ relate or differ in both surfaces: 
\begin{defn}[Homotopy of Closed Curves]\label{def: homotopy of closed curves}
Let $\alpha, \beta: S^1 \to S$ be two closed curves in a surface $S$. We say that $\alpha$ and $\beta$ are {\em homotopic as closed curves} if there exists a continuous map $F: [0,1]\times S^1\to S$ that satisfies the following: 
\begin{itemize}
    \item[(1)] For every $s\in S^1$, we have $F(0,s) = \alpha(s)$.
    \item[(2)] For every $s\in S^1$, we have $F(1,s) = \beta(s)$.
\end{itemize}
\end{defn}
When two curves $\alpha$ and $\beta$ are homotopic, we write $\alpha\simeq\beta$. Part of our motivation in studying punctured surfaces comes from understanding these homotopy classes. We now define some relevant properties of closed curves for the purposes of our paper.
\begin{defn}[Properties of Closed Curves] \label{def: closed curves}
Let $\alpha: S^1\to S$ be a closed curve in any surface $S$.
\begin{enumerate}[label=(\alph*)]
    \item $\alpha $ is {\em simple} if (the map) is injective. Items (b)-(d) assume that $\alpha$ is simple.
    \item $\alpha$ is {\em parallel to the boundary} if it is homotopic to one of the boundary components.
    \item A simple closed curve $\alpha$ is {\em essential} if it does not bound a disk or a punctured disk and is not parallel to the boundary.
    \item A simple closed curve $\alpha$ is {\em separating} if $S\setminus \alpha(S^1)$ is not connected and $\alpha$ is {\em non-separating} if $S\setminus\alpha(S^1)$ is connected.
\end{enumerate}
\end{defn}
\begin{defn}[Isotopy of Closed Curves]
Let $\alpha$, $\beta$ be simple closed curves such that $\alpha\simeq\beta$, and let $F:[0,1]\times S^1\to S$ be the continuous map such that $F(0,s) = \alpha(s), F(1,s)=\beta(s)$ for every $s\in S^1$. If $F(t,\cdot)$ is injective for all $t\in[0,1]$, then we say that $\alpha$ and $\beta$ are {\em isotopic as closed curves}.

\end{defn}
As outlined in the following section, we will be able to define a one-to-one correspondence between specific pairs of homotopy classes of essential, non-separating simple closed curves on the twice-punctured torus $\twicepuncturedtorus$ and homotopy classes of essential simple arcs on the once-punctured torus, based at the puncture. This will give us a better intuition of how these curves behave within our surface. 
1
 Thus, we define proper arcs within punctured surfaces as well as their properties and the notion of homotopy of arcs.

\begin{defn}[Proper Arcs]\label{def: proper arc}
Let $S$ be a surface, with set of punctures $P$. A {\em proper arc} in $S$ (or in $(\overline{S},P)$) is a continuous map $a: [0,1] \to \overline{S}$ such that $a^{-1}(P \cup \partial S) = \{0,1\}$.
\end{defn}

\begin{defn}[Homotopy of Arcs]\label{def: homotopy of arcs}
Let $S = (\overline{S}, P)$ be a surface with punctures $P$. Let $a,b: [0,1]\to \overline{S}$ be two proper arcs. We say that $a$ and $b$ are {\em homotopic as proper arcs} if there is a continuous map $F: [0,1]\times[0,1] \to \overline{S}$ that satisfies the following:
\begin{itemize}
    \item[(1)] $F(t,\cdot)$ is a proper arc for all $t\in[0,1]$.
    \item[(2)] $F(0,s) = a(s)$ for every $s \in [0,1]$.
    \item[(3)] $F(1,s) = b(s)$ for every $s \in [0,1]$.
\end{itemize}
We will continue to use the notation $a\simeq b$ if $a$ and $b$ are homotopic as proper arcs. Additionally, we say that $a$ and $b$ are {\em isotopic as proper arcs} if $F(t,\cdot)$ is injective for all $t\in[0,1]$.
\end{defn}

\begin{defn}[Properties of Arcs]\label{def: arcs} Let
 $a: [0,1]\to \overline{S}$ be a proper arc in a surface $S$. 
 \begin{itemize}
     \item[(a)] $a$ is called {\em simple} if $\alpha|_{(0,1)}$ is injective. Items (b)-(c) assume that $\alpha$ is simple.
     \item[(b)] A simple proper arc $a$ is {\em parallel to the boundary} if it is homotopic into a boundary component.
     \item[(c)] A simple proper arc $a$ is {\em essential} if it is not parallel to the boundary, not homotopic to a marked point of $S$, and does not bound a punctured disk.
     \item[(d)] If $b:[0,1]\to\overline{S}$ is also a simple proper arc and $F:[0,1]\times[0,1]\to\overline{S}$ is a homotopy from $a$ to $b$ such that $F(t,\cdot)$ is injective for all $t\in[0,1]$ then $a$ and $b$ are isotopic as proper arcs.
 \end{itemize}
\end{defn}
\subsection*{A note on notation}
For the sake of consistency and transparency throughout the remainder of this paper, we feel the need to outline a standard notation for curves and arcs.  When discussing curves, we make use of the Greek alphabet. We denote homotopy classes of simple closed curves with unaccented characters; $\alpha$, $\beta$, $\gamma$, for example. We indicate simple representatives of these classes by ``hatting" their characters. For instance, we may choose $\alphahat$ as a simple representative of the set $\alpha$. A similar pattern occurs when describing arcs, though we make use of the Latin alphabet. We denote homotopy classes of proper arcs by $a$, $b$, $c$, for example, and choose their respective simple representatives $\hat{a}$, $\hat{b}$, and $\hat{c}$. Additionally, although closed curves and proper arcs are defined to be continuous maps, when referencing either object we will identify the map with its image. That is, for a proper arc $\hat{a}:[0,1]\to\overline{S}$ and a closed curve $\alphahat:S^1\to\overline{S}$, we will let $a$ denote the image $\hat{a}[0,1]$ and $\alphahat$ denote the image $\alphahat(S^1)$.

\begin{defn}[Intersection Number]
Let $\alpha$ and $\beta$ be homotopy classes of simple closed curves in the torus $T^2$. For representatives $\hat{\alpha} \in\alpha,\hat{\beta}\in\beta$, define $n(\hat{\alpha},\hat{\beta})$ to be the number of times $\hat{\alpha}$ and $\hat{\beta}$ intersect. Then we define the {\em (geometric) intersection number} of $\alpha$ and $\beta$ by
$$i(\alpha,\beta):= \min\{n(\hat{\alpha},\hat{\beta})\:|\:\hat{\alpha} \in\alpha,\hat{\beta}\in\beta\}. $$
Thus, the intersection number is the minimum number of times that representatives of the two classes intersect. More results on intersection number can be found in \cite{MC}.
\end{defn}

Now that we have compiled a list of important properties of how closed curves and proper arcs behave within a punctured surface, we introduce the surface that will be the focal point of the remainder of our paper: the twice-punctured torus. 

\begin{defn}[Twice Punctured Torus]
Let $T^2$ be the torus, and let $P=\{\bullet, \circ\}$ be a set of two marked points in the interior of $T^2$. We define $S = T^2\setminus P$ to be the twice-punctured torus and denote it as $\twicepuncturedtorus$. 
\end{defn}

\begin{figure}[htbp]
    \centering
    \includegraphics[width=6cm]{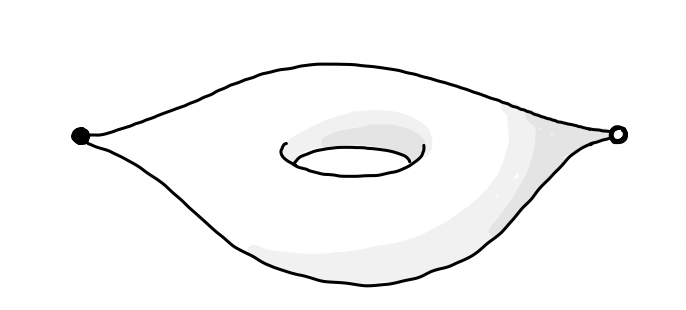}
    \caption{The twice-punctured torus.}
    \label{fig:twicepuncturedtorus}
\end{figure}
\section{(Tri-)Pants and Decompositions}\label{sec:pants}

\subsection{Pants decompositions} In this section, we investigate the relationship between the twice-punctured torus and collections of pairs of pants through the construction of pants decompositions. We will first present formal definitions of pants decompositions and then proceed to outline some of their important properties. 

\begin{defn}
Let $S_{g,b,n}$ denote the homeomorphism class of surfaces with genus $g$, number of boundary components $b$, and number of punctures $n$. 
\end{defn} 

\begin{defn}
A {\em pair of pants} is a surface $P$ in the homeomorphism class $S_{0,b,n}$, where $b+n=3$.
\end{defn}
In most extant literature, $P$ has three boundary components and no punctures. We extend this definition by allowing punctures to take the place of boundary components. In this less rigid construction, the sum of boundary components and punctures must equal three. Note that a pair of pants $P$ does not contain any non-separating simple closed curves, nor does it contain an essential simple closed curve \cite{MC}.

\begin{defn}[Pants Decomposition]\label{def: pantsdecompI}
A {\em pants decomposition} of a surface $S$ is a collection of disjoint simple closed curves $(\gamma_i)_i$ so that every connected component of $S\setminus\bigcup_i \gamma_i$ is homeomorphic to the interior of a pair of pants.
\end{defn}

Note that we say that a collection $(\alpha_i)_i$ of homotopy classes of simple closed curves give a pants decomposition of $S$ if there exists a set of representatives from each class $(\alphahat_i)_i$ which form a pants decomposition of the surface.

This definition is equivalent to saying that a pants decomposition is a maximal collection of disjoint essential simple closed curves in $S$ that are pairwise non-homotopic, as proven in Section 8.3.1 of \cite{MC}. Although a pants decomposition may include separating curves in other settings, we will focus only on pants decompositions composed of non-separating curves in this paper. It is important to note that a surface admits a pants decomposition if and only if $\chi(S) < 0$. This result follows from the fact that $\chi(S) = -n$ for any surface $S$ which may be decomposed into $n$ disjoint pairs of pants, as shown in Section 8.3.1 of \cite{MC}. 

\begin{lem}\label{lem: curvesinpantsdecomp}
Let $S$ be a surface so that $\chi(S)<0$ and with genus $g$, $b$ boundary components, and $n$ punctures. Any pants decomposition of $S$ divides the surface into $2g -2 + b + n$ pairs of pants and consists of $3g - 3 + b + n$ curves.
\end{lem}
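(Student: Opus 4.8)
The plan is to prove both formulas simultaneously by a standard Euler characteristic and counting argument. First I would recall from the discussion preceding the lemma that $\chi(S) = -N$ whenever $S$ decomposes into $N$ pairs of pants, since each pair of pants has Euler characteristic $-1$ and gluing along circles does not change the Euler characteristic (the circles contribute $0$). So if a pants decomposition of $S$ divides $S$ into $N$ pants, then $N = -\chi(S)$. Next I would compute $\chi(S)$ directly for a surface of genus $g$ with $b$ boundary components and $n$ punctures: capping off the boundary components and filling in the punctures yields a closed genus-$g$ surface of Euler characteristic $2-2g$, and each boundary component removed lowers $\chi$ by $1$ while each puncture removed also lowers $\chi$ by $1$, so $\chi(S) = 2 - 2g - b - n$. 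Combining these gives $N = 2g - 2 + b + n$, which is the first assertion.

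For the count of curves, I would argue by a second Euler-characteristic-style bookkeeping, this time on the decomposition itself viewed as a kind of CW structure, or more simply by counting boundary circles. Let $C$ be the number of curves in the pants decomposition. Each of the $N$ pants contributes $3$ boundary circles (after isotoping so the decomposition curves are genuinely disjoint and each complementary piece is an open pair of pants with three ends). Of these $3N$ boundary circles, the $b$ boundary components of $S$ and the $n$ punctures each account for exactly one, while each of the $C$ decomposition curves is shared by two complementary pieces (this uses that the curves are non-separating need not be invoked — two-sidedness in an orientable surface suffices, so each curve has a pants piece on each side, possibly the same piece). Hence $3N = 2C + b + n$. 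Substituting $N = 2g - 2 + b + n$ gives $3(2g - 2 + b + n) = 2C + b + n$, so $2C = 6g - 6 + 2b + 2n$, i.e.\ $C = 3g - 3 + b + n$, the second assertion.

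The main obstacle, such as it is, lies in the second count: one must be careful about the claim that each decomposition curve borders two distinct sides of a pants piece, as opposed to the same piece on both sides. In an orientable surface every simple closed curve is two-sided, so a small annular neighborhood of any decomposition curve $\gamma$ meets the complement $S \setminus \bigcup \gamma_i$ in two annuli; each such annulus lies in some complementary pair of pants, and whether these two pants are the same or different, the curve $\gamma$ still contributes exactly $2$ to the total count $3N$ of boundary circles of the (abstract, disjoint) pants pieces. So the identity $3N = 2C + b + n$ holds regardless, and no genus bookkeeping subtlety is needed beyond orientability. I would also remark that the hypothesis $\chi(S) < 0$ is exactly what guarantees a pants decomposition exists, as noted before the lemma, so the statement is non-vacuous; and I would note in passing that a cross-check against Lemma~\ref{lem: curvesinpantsdecomp}'s intended application — the twice-punctured torus, with $g = 1$, $b = 0$, $n = 2$ — gives $N = 2$ pairs of pants and $C = 2$ curves, consistent with the definition of a pants decomposition of $\twicepuncturedtorus$ as a pair of disjoint curves cutting it into two once-punctured annuli.
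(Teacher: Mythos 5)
Your argument is correct. Note that the paper itself does not prove this lemma at all: it simply defers to Section 8.3.1 of \cite{MC}, having already quoted the key fact that $\chi(S)=-N$ when $S$ decomposes into $N$ pairs of pants. Your write-up supplies exactly the standard argument that citation points to: $\chi$ is additive under gluing along circles (each of Euler characteristic $0$), each pair of pants — whether its three ends are boundary circles or punctures — has $\chi=-1$, and $\chi(S)=2-2g-b-n$, giving $N=2g-2+b+n$; then the end count $3N=2C+b+n$ yields $C=3g-3+b+n$. The one place where care is needed, the possibility that a decomposition curve has the same pair of pants on both sides, you handle correctly: two-sidedness (automatic in an orientable surface) means each curve contributes exactly two boundary circles to the abstract disjoint union of pants regardless of whether the two adjacent pieces coincide, so the identity $3N=2C+b+n$ is unaffected. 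Your sanity check on the twice-punctured torus ($N=2$, $C=2$) matches Lemma~\ref{lem: pantsdecomp} and the way the lemma is used in the paper. In short, the proposal is a complete, self-contained proof of a statement the paper leaves to the literature, and it follows the same Euler-characteristic route the paper's surrounding discussion sketches.
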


A proof of Lemma~\ref{lem: curvesinpantsdecomp} (among many other results pertaining to pants decompositions) can be found in \cite{MC}. From now on, this paper will primarily focus on the twice-punctured torus. Lemma~\ref{lem: curvesinpantsdecomp} shows that a pants decomposition of $\twicepuncturedtorus$ consists of exactly two curves and splits $\twicepuncturedtorus$ into two pairs of pants. We will further restrict our study of pants decompositions to only those satisfying the following definition.

\begin{defn}\label{def: pantsdecompsoftpt}
Let $\p$ be the set of all pairs of distinct homotopy classes of non-separating, essential simple closed curves $\alpha, \alpha '$ in $\twicepuncturedtorus$ such that $\{\alpha, \alpha '\}$ determines a pants decomposition.
\end{defn}

\begin{lem}\label{lem: pantsdecomp}
Every pants decomposition $\{\alpha,\alpha'\}\in\p$ separates the twice-punctured torus into two surfaces, both homeomorphic to a once-punctured annulus.
\end{lem}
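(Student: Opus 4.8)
The plan is to combine the counting information from Lemma~\ref{lem: curvesinpantsdecomp} with a direct Euler-characteristic and puncture-bookkeeping argument to pin down the homeomorphism type of each complementary piece. First I would recall that, by Lemma~\ref{lem: curvesinpantsdecomp} applied to $S=\twicepuncturedtorus$ (genus $g=1$, $b=0$ boundary components, $n=2$ punctures), any pants decomposition $\{\alpha,\alpha'\}\in\p$ consists of exactly $3g-3+b+n=2$ curves and cuts the surface into exactly $2g-2+b+n=2$ pairs of pants. Fix disjoint simple representatives $\alphahat$, $\alphahat'$; cutting $T^2$ along the circles $\alphahat\cup\alphahat'$ produces a compact surface whose interior is the disjoint union of the two complementary pieces. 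Since both $\alpha$ and $\alpha'$ are non-separating, each of the two pieces must be incident to both curves (otherwise cutting along one curve alone would disconnect the surface), so each piece inherits exactly two boundary circles from the cut.

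Next I would account for the punctures. Each of the two pieces is, by definition of a pants decomposition, homeomorphic to the interior of a pair of pants, i.e.\ to a surface in the class $S_{0,b',n'}$ with $b'+n'=3$; we have just shown each piece has $b'=2$ boundary circles coming from the cut, so each piece has exactly $n'=1$ puncture. But the only punctures available are $\circ$ and $\bullet$, and cutting along $\alphahat\cup\alphahat'$ (which are disjoint from the punctures) distributes the two punctures between the two pieces; since each piece must receive exactly one, one piece contains $\circ$ and the other contains $\bullet$. Thus each complementary piece is a genus-$0$ surface with two boundary circles and one puncture, which is precisely a once-punctured annulus. Re-gluing does not change this description of the pieces, so $\{\alpha,\alpha'\}$ separates $\twicepuncturedtorus$ into two once-punctured annuli, as claimed.

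The main point requiring care — and the step I would expect to be the only real obstacle — is justifying that each complementary piece meets \emph{both} of the curves $\alphahat,\alphahat'$, so that the split $3+3$ of six total boundary/puncture slots across the two pants is actually $2+1$ and $2+1$ rather than, say, $3+0$ paired with $0+3$ or some degenerate configuration. Here one uses non-separability essentially: if some piece were bounded by two copies of $\alphahat$ alone (and no copy of $\alphahat'$), then $\alphahat'$ would lie entirely in the closure of the other piece, and $T^2\setminus\alphahat$ would be disconnected — contradicting that $\alpha$ is non-separating in $\twicepuncturedtorus$ (hence in $T^2$). A symmetric argument rules out the same for $\alphahat'$. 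Once this incidence is established, the remaining bookkeeping with Euler characteristics and puncture counts is routine, and one may alternatively phrase the whole argument by computing $\chi$ of each piece directly: $\chi(\twicepuncturedtorus)=-2$ splits as $-1+-1$ over the two pants, and a genus-$0$ surface with two boundary circles and one puncture has $\chi=-1$, consistent with the once-punctured annulus.
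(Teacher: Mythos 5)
Your proof is correct and takes essentially the same route as the paper: both combine the count from Lemma~\ref{lem: curvesinpantsdecomp} with the key observation that a complementary piece whose boundary involves only one of the two curves would force that curve to be separating, contradicting $\{\alpha,\alpha'\}\in\p$. The paper organizes this as a contradiction (a pair of pants containing both punctures would have a single boundary circle coming from one curve), whereas you establish the boundary incidence directly and then do the puncture bookkeeping, but the substance is the same.
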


\begin{proof}
By Lemma~\ref{lem: curvesinpantsdecomp}, we may be sure that a pants decomposition of $\twicepuncturedtorus$ consists of two homotopy classes $\alpha,\alpha'$ and divides the surface into two pairs of pants, $P_1$ and $P_2$. For sake of contradiction, suppose that $P_1$ contains both $\bullet$ and $\circ$. Then $P_1$ must have exactly one boundary component as $b(P_1)+n(P_1)=3$. Since $b(\twicepuncturedtorus)=0$, this boundary component arises from cutting along a representative of $\alpha$ or $\alpha'$. Without loss of generality, assume that it arises from cutting along $\hat{\alpha}\in\alpha$. However, this implies that $\hat{\alpha}$ is separating, contradicting the fact that $\{\alpha,\alpha'\}\in\p$. Then $P_1$ and $P_2$ each must contain exactly one of the punctures, since applying the same argument to $P_2$ gives the same contradiction. Therefore $P_1$ and $P_2$ each have two boundary components and one puncture, and thus both are homeomorphic to a punctured annulus.
\end{proof}

\begin{figure}[htbp]
    \centering
    \includegraphics[width = 10 cm]{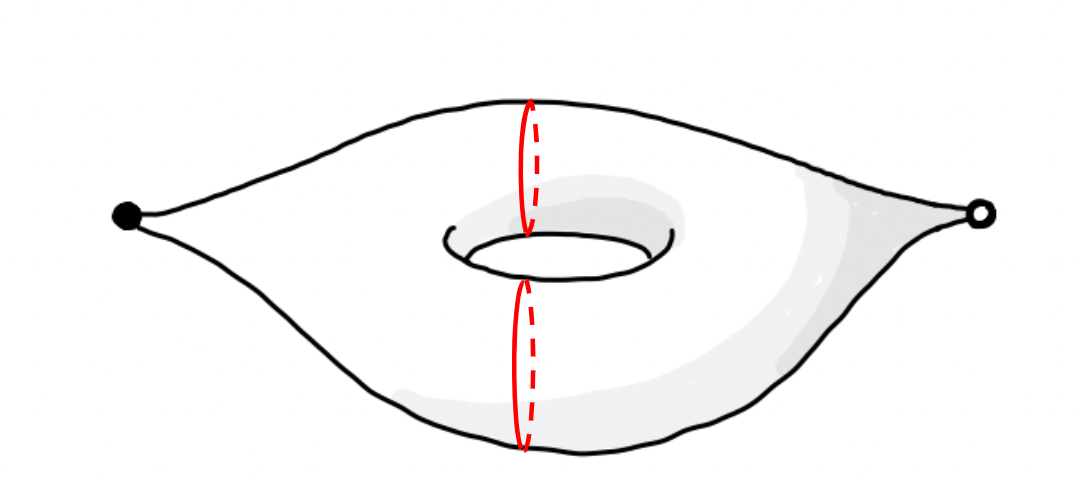}
    \caption{Two curves on $\twicepuncturedtorus$ that determine a pants decomposition.}
    \label{fig: meridiandecomp}
\end{figure}

\begin{cor}
Suppose that two curves $\hat{\alpha},\hat{\alpha}'$ determine a pants decomposition of $\twicepuncturedtorus$. Then $\hat{\alpha} \simeq \hat{\alpha}'$ when viewed as curves on $T^2$, but $\hat{\alpha} \not\simeq \hat{\alpha}'$ on the twice-punctured torus.
\end{cor}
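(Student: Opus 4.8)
The plan is to use Lemma~\ref{lem: pantsdecomp}, which tells us that $\hat\alpha$ and $\hat\alpha'$ bound (on either side) two once-punctured annuli $P_1$ and $P_2$, with $\bullet \in P_1$ and $\circ \in P_2$, say. First I would argue that $\hat\alpha \simeq \hat\alpha'$ as curves on $T^2$. Filling in the punctures turns each once-punctured annulus into an honest annulus, so on $T^2$ the curves $\hat\alpha$ and $\hat\alpha'$ cobound an annulus; the core circle of an annulus is isotopic to either boundary component, hence $\hat\alpha \simeq \hat\alpha'$ on $T^2$. (One should note $\hat\alpha$ is essential on $T^2$: if it bounded a disk, then $T^2$ cut along $\hat\alpha$ would be a disk union the other piece, which is incompatible with getting two annuli after cutting along both curves; alternatively, a separating curve on $T^2$ bounds on at least one side, but cutting $T^2$ along $\hat\alpha$ yields a connected surface since $\hat\alpha$ is non-separating on the twice-punctured torus and filling punctures only merges components.)

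Next I would show $\hat\alpha \not\simeq \hat\alpha'$ on $\twicepuncturedtorus$. This is where the punctures matter: $\hat\alpha$ and $\hat\alpha'$ are by hypothesis \emph{distinct} homotopy classes in $\p$ (Definition~\ref{def: pantsdecompsoftpt} requires the two classes of a pants decomposition to be distinct), so by definition $\hat\alpha \not\simeq \hat\alpha'$ on the twice-punctured torus. If one wants a more conceptual reason rather than citing the definition: were they homotopic on $\twicepuncturedtorus$, they would be isotopic (homotopic essential simple closed curves on a surface are isotopic, cf.\ \cite{MC}), so we could take $\hat\alpha = \hat\alpha'$, but then the collection $\{\hat\alpha\}$ is a single curve and cannot cut $\twicepuncturedtorus$ into two pairs of pants — it gives only one component, a once-punctured annulus, not two pairs of pants, contradicting Lemma~\ref{lem: curvesinpantsdecomp} (which requires two curves). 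Equivalently, the annulus cobounded on $T^2$ by $\hat\alpha$ and $\hat\alpha'$ contains at least one puncture in its interior (in fact it splits into $P_1$ and $P_2$, which contain $\bullet$ and $\circ$ respectively), so this annulus is \emph{not} an embedded annulus in $\twicepuncturedtorus$, and there is no isotopy on the punctured surface carrying one curve to the other.

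I expect the only subtlety is making the first half airtight — namely checking that after filling in the punctures the two once-punctured annuli really do become a pair of genuine annuli glued along $\hat\alpha \cup \hat\alpha'$, and that $\hat\alpha$ is essential (non-nullhomotopic) on $T^2$ so that "cobounding an annulus" actually forces "isotopic." Both points follow quickly from Lemma~\ref{lem: pantsdecomp} and the non-separating hypothesis, so the proof is short; the corollary is essentially a repackaging of Lemma~\ref{lem: pantsdecomp} together with the distinctness clause built into the definition of $\p$.
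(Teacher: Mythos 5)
Your proposal is correct and follows the paper's own route: the paper gives no separate argument, stating only that the corollary ``follows directly from Lemma~\ref{lem: pantsdecomp},'' and your write-up simply fleshes out that deduction (the two once-punctured annuli become genuine annuli after filling the punctures, giving the homotopy on $T^2$, while the punctures/distinctness in $\p$ obstruct a homotopy on $\twicepuncturedtorus$). The extra worry about essentiality of $\hat{\alpha}$ on $T^2$ is unnecessary — the cobounded annulus itself provides the homotopy — but it does no harm.
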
\label{cor: homotopicpantsdecomp}
 
This corollary will be crucial in our study of tri-pants and follows directly from Lemma~\ref{lem: pantsdecomp}. Further information about pants decompositions and most omitted proofs from this section can be found in Section 8.3.1 of \cite{MC}. For the following lemmas, we define $\eta: \twicepuncturedtorus\to T^2$ to be the inclusion map and $\eta_*$ the induced map the sends homotopy classes of essential simple closed curves in $\twicepuncturedtorus$ to those in $T^2$.

\begin{lem}\label{lem:intleq2}
Let $\alpha, \beta$ be homotopy classes in $\twicepuncturedtorus$ such that $i(\alpha, \beta) < 2$. Then $i(\alpha,\beta) = i(\eta_*(\alpha),\eta_*(\beta))$. 
\end{lem}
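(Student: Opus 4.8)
The plan is to show that any simple closed curve in $\twicepuncturedtorus$ remains essential and non-separating after including into $T^2$, so that $\eta_*$ makes sense on the relevant classes, and then to argue that a pair of representatives $\hat\alpha, \hat\beta$ realizing the geometric intersection number in $\twicepuncturedtorus$ also realizes it in $T^2$ when that number is $0$ or $1$. The inequality $i(\eta_*(\alpha),\eta_*(\beta)) \le i(\alpha,\beta)$ is automatic: any representatives in $\twicepuncturedtorus$ are in particular representatives in $T^2$, so the minimum over the larger class of homotopies can only be smaller. Thus the content is the reverse inequality, $i(\eta_*(\alpha),\eta_*(\beta)) \ge i(\alpha,\beta)$, under the hypothesis $i(\alpha,\beta) < 2$.

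First I would dispose of the case $i(\alpha,\beta) = 0$. Here $\alpha$ and $\beta$ have disjoint representatives $\hat\alpha, \hat\beta$ in $\twicepuncturedtorus$; these are disjoint in $T^2$ as well, so $i(\eta_*(\alpha),\eta_*(\beta)) = 0$, giving equality. (One should note $\eta_*(\alpha) \ne \eta_*(\beta)$ is not needed — the statement is purely about intersection numbers — and even if the two images coincided in $T^2$, the intersection number of a class with itself is $0$, consistent with the claim.) The case $i(\alpha,\beta) = 1$ is the substantive one. Choose representatives $\hat\alpha \in \alpha$, $\hat\beta \in \beta$ meeting exactly once, transversally, in $\twicepuncturedtorus$, hence in $T^2$; then $n(\hat\alpha,\hat\beta) = 1$, so $i(\eta_*(\alpha),\eta_*(\beta)) \le 1$. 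To get $\ge 1$, I would argue by contradiction: if $i(\eta_*(\alpha),\eta_*(\beta)) = 0$, then $\eta_*(\alpha)$ and $\eta_*(\beta)$ are disjoint in $T^2$, hence (two disjoint essential simple closed curves on the torus being isotopic) $\eta_*(\alpha) = \eta_*(\beta)$ as homotopy classes on $T^2$. The goal is to deduce that $i(\alpha,\beta) = 0$ already in $\twicepuncturedtorus$, contradicting $i(\alpha,\beta)=1$. This is where the bigon criterion enters: a pair of transverse simple closed curves realizes its geometric intersection number on a surface if and only if they form no bigon. If $\hat\alpha$ and $\hat\beta$ intersect once in $T^2$ but $\eta_*(\alpha) = \eta_*(\beta)$, then in $T^2$ they can be isotoped off each other; but an isotopy in $T^2$ need not avoid the punctures, so one must track the punctures. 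The cleanest route is: on $T^2$ two simple closed curves that are isotopic and meet transversally in a single point bound exactly one bigon (more precisely, the complement of the single intersection point in $\hat\alpha\cup\hat\beta$, together with the point, can be analyzed directly — the union of two transverse circles meeting once and isotopic on the torus must cobound a bigon, else the geometric intersection number would be $1\neq 0$). This bigon is an embedded disk $D$ in $T^2$; if $D$ contains neither puncture, then it is a bigon in $\twicepuncturedtorus$ too, and removing it via an isotopy decreases the intersection number to $0$ in $\twicepuncturedtorus$, a contradiction. So $D$ must contain at least one puncture. Here I would use that there is only one intersection point, so only one bigon, and both punctures cannot simultaneously be accounted for — I would carefully chase the Euler characteristic / combinatorics of the complement of $\hat\alpha \cup \hat\beta$ in $T^2$ (which consists of a single disk, the bigon) to derive that putting a puncture inside $D$ forces $\hat\alpha$ or $\hat\beta$ to be inessential or forces $i(\alpha,\beta)\neq 1$, contradicting the hypotheses.

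The main obstacle I expect is exactly this bigon-chasing step: controlling the position of the punctures relative to the unique bigon, and ruling out the configuration where a puncture sits inside it. The key idea to make this clean is that when $i(\alpha,\beta)=1$ in $\twicepuncturedtorus$ the curves $\hat\alpha,\hat\beta$ are in minimal position there, so there is \emph{no} bigon in $\twicepuncturedtorus$ whatsoever; the only way a bigon can appear after filling in the punctures is if that bigon contains a puncture. A single transverse intersection point yields a single bigon $D$ in $T^2$, and $D$ has two punctures available but the complementary region $T^2 \setminus (\hat\alpha\cup\hat\beta)$ is connected and equals the open bigon $D$, so \emph{both} punctures lie in $D$; but then $\eta$ restricted to a neighborhood shows $\hat\alpha$ is homotopic in $\twicepuncturedtorus$ to a curve bounding a disk-with-two-punctures on one side, forcing either separating-ness or inessentiality — both excluded, since $\alpha$ and $\beta$ are classes of essential non-separating curves on $\twicepuncturedtorus$ (this hypothesis is implicit in the setup: $\alpha,\beta$ range over the classes relevant to pants and tri-pants). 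This contradiction completes the case $i(\alpha,\beta)=1$, and with the case $i(\alpha,\beta)=0$ already handled, the lemma follows. I would cite the bigon criterion and the classification of simple closed curves on the torus from \cite{MC} for the topological inputs.
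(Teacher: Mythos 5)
There is a genuine gap in your treatment of the case $i(\alpha,\beta)=1$, which is the heart of the lemma. Your argument rests on the claim that two transverse simple closed curves on $T^2$ meeting in a single point, with homotopic images, ``bound exactly one bigon'' $D$ whose interior is the whole complement $T^2\setminus(\hat{\alpha}\cup\hat{\beta})$. This is false: a bigon is an embedded disk whose boundary consists of one arc of each curve meeting at \emph{two distinct} intersection points (its corners), so with a single transverse intersection point no bigon can exist at all; and for two essential simple closed curves meeting once on the torus the complementary region is a single quadrilateral, not a bigon. Consequently the configuration you set out to analyze never occurs, and the ensuing puncture-chasing (``both punctures lie in $D$,'' the appeal to separating-ness/inessentiality, etc.) is built on a false premise — note also that essentiality and non-separation are not hypotheses of the lemma and are not needed. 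The impossibility of a bigon is in fact exactly what finishes the proof: by the bigon criterion, transverse representatives with $n(\hat{\alpha},\hat{\beta})\le 1$ are automatically in minimal position in $T^2$, so $i(\eta_*(\alpha),\eta_*(\beta))=n(\hat{\alpha},\hat{\beta})=i(\alpha,\beta)$, with no contradiction argument or classification of torus curves required. (Alternatively, the configuration you assumed is already excluded by the homotopy invariance of the mod-2 intersection number.)

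For comparison, the paper argues the contrapositive in one stroke: take representatives of $\alpha,\beta$ in minimal position in $\twicepuncturedtorus$; if the intersection number drops under $\eta$, the bigon criterion produces a bigon in $T^2$, which must contain a puncture (else it would contradict minimality in $\twicepuncturedtorus$ — this is the one correct observation your write-up shares), and since a bigon has two corners this forces $i(\alpha,\beta)\ge 2$. Your case split and the inequality $i(\eta_*(\alpha),\eta_*(\beta))\le i(\alpha,\beta)$ are fine, but the $i(\alpha,\beta)=1$ case needs to be replaced by the no-bigon observation above.
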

\begin{proof}
We approach this claim from the contrapositive. Let $\alpha, \beta$ be distinct homotopy classes in $\twicepuncturedtorus$ and let $\alphahat \in \alpha$ and $\hat{\beta} \in \beta$ be representatives of these classes that are in minimal position. Suppose that $i(\eta_*(\alpha),\eta_*(\beta)) \neq i(\alpha,\beta)$. Then $\eta\circ\alphahat$ and $\eta\circ\hat{\beta}$ must bound a bigon on $T^2$ in order to reduce intersection number under $\eta_*$ without changing either homotopy class. It follows that $\alphahat$ and $\betahat$ bound a punctured bigon on $\twicepuncturedtorus$, and $i(\alpha,\beta) \geq 2$. We thus conclude that if $i(\alpha,\beta) < 2$, then $i(\alpha,\beta) = i(\eta_*(\alpha),\eta_*(\beta))$.
\end{proof}

\begin{lem}\label{lem: cardleq2}
If $X$ is a set of homotopy classes of essential non-separating simple closed curves in $\twicepuncturedtorus$ that pairwise intersect at most once, then for any homotopy class of simple closed curves $\alpha \in T^2$, the cardinality of $\{\beta \in X | \eta_*(\beta) = \alpha \}$ does not exceed 2.
\end{lem}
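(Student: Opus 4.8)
The plan is to argue by contradiction. Suppose that $X$ contains three distinct homotopy classes $\beta_1, \beta_2, \beta_3$ with $\eta_*(\beta_1) = \eta_*(\beta_2) = \eta_*(\beta_3) = \alpha$ for some fixed homotopy class $\alpha$ in $T^2$. Since the $\beta_j$ pairwise intersect at most once in $\twicepuncturedtorus$, Lemma~\ref{lem:intleq2} applies and gives $i(\beta_j, \beta_k) = i(\eta_*(\beta_j), \eta_*(\beta_k)) = i(\alpha, \alpha) = 0$ for all $j \neq k$. Hence the three classes $\beta_1, \beta_2, \beta_3$ are pairwise disjoint (and pairwise non-homotopic, since they are distinct). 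The heart of the argument is then to show that three pairwise disjoint, pairwise non-homotopic essential non-separating simple closed curves on $\twicepuncturedtorus$ cannot all map to the same class in $T^2$ — in fact, I expect that already the disjointness forces a contradiction with Lemma~\ref{lem: curvesinpantsdecomp}.

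First I would observe that a collection of disjoint essential simple closed curves in $\twicepuncturedtorus$ can be extended to a pants decomposition, and by Lemma~\ref{lem: curvesinpantsdecomp} a pants decomposition of $\twicepuncturedtorus$ consists of exactly two curves; so any collection of pairwise disjoint, pairwise non-homotopic essential simple closed curves has cardinality at most two. This already contradicts having three such curves $\beta_1, \beta_2, \beta_3$, provided I can rule out the degenerate possibility that two of them, although non-homotopic in $\twicepuncturedtorus$, become ``parallel'' in the sense needed for the maximality statement. Concretely, I would take disjoint representatives $\betahat_1, \betahat_2, \betahat_3$ and consider the complement $\twicepuncturedtorus \setminus (\betahat_1 \cup \betahat_2)$: by Lemma~\ref{lem: pantsdecomp} (applied to the pants decomposition $\{\beta_1, \beta_2\} \in \p$, using that each $\beta_j$ is non-separating — here I should check $\{\beta_1,\beta_2\}$ is genuinely a pants decomposition and not, say, a single curve repeated, which is guaranteed since they are non-homotopic and disjoint so together they are maximal among disjoint families) the surface splits into two once-punctured annuli, neither of which contains an essential simple closed curve that is non-separating in $\twicepuncturedtorus$ and non-homotopic to $\beta_1, \beta_2$. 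Since $\betahat_3$ is disjoint from $\betahat_1 \cup \betahat_2$, it lives in one of these two once-punctured annuli, and there it must be homotopic to a boundary curve, i.e. to $\beta_1$ or $\beta_2$ — contradicting that the three classes are distinct.

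I expect the main obstacle to be the bookkeeping around the word ``at most once'': I need Lemma~\ref{lem:intleq2} to conclude $i(\beta_j,\beta_k) = 0$ and not merely $\le 1$, and this is exactly what that lemma gives since $i(\beta_j,\beta_k) < 2$. A secondary subtlety is making sure that $\{\beta_1, \beta_2\}$ really is an element of $\p$ — that is, that two distinct disjoint non-separating essential simple closed curves on $\twicepuncturedtorus$ automatically form a pants decomposition; this follows because by Lemma~\ref{lem: curvesinpantsdecomp} a pants decomposition has exactly two curves, so any two disjoint non-homotopic essential curves already constitute a maximal disjoint family, hence a pants decomposition (using the equivalence recalled after Definition~\ref{def: pantsdecompI}). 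Once these points are pinned down, the contradiction with the ``at most one extra disjoint curve'' count is immediate, giving $|\{\beta \in X \mid \eta_*(\beta) = \alpha\}| \le 2$.
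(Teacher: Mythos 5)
Your argument is correct, and it reaches the contradiction by a somewhat different route than the paper. Both proofs start identically: assume three distinct classes $\beta_1,\beta_2,\beta_3\in X$ with the same image under $\eta_*$, and use Lemma~\ref{lem:intleq2} to conclude $i(\beta_j,\beta_k)=i(\alpha,\alpha)=0$, hence pairwise disjoint representatives. From there the paper pushes the picture down to $T^2$: it cuts the torus along the images of two of the curves, observes the third is boundary-parallel in one of the resulting annuli, so the three curves cut $T^2$ into three annuli, and then counts punctures -- two marked points cannot touch all three annuli, so two of the curves cobound an unpunctured annulus in $\twicepuncturedtorus$ and are therefore homotopic there, a contradiction. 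You instead stay on $\twicepuncturedtorus$: either invoke directly that a pants decomposition has exactly two curves (Lemma~\ref{lem: curvesinpantsdecomp}) together with the characterization of pants decompositions as maximal disjoint families of pairwise non-homotopic essential curves, so three disjoint non-homotopic essential curves cannot exist; or, more carefully, note that $\{\beta_1,\beta_2\}\in\p$, cut along it (Lemma~\ref{lem: pantsdecomp}) into two once-punctured annuli, and observe that $\betahat_3$, being essential, must be boundary-parallel in the pair of pants containing it, hence homotopic to $\beta_1$ or $\beta_2$. Your route leans on the pants-decomposition machinery of Section~2 (and the Farb--Margalit facts the paper itself cites and reuses, e.g.\ in the proof of Theorem~\ref{thm: tripants and maximal collections}), and is slightly more economical in that after disjointness it no longer needs that the three curves have the same $\eta_*$-image; the paper's route is more self-contained at the level of annuli in $T^2$ and uses the puncture count instead. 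Both versions gloss over the same minor point, namely realizing the three curves simultaneously in pairwise minimal position, so that is not a gap relative to the paper's standard of rigor.
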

\begin{proof}
First, let $\delta$ be a homotopy class of simple closed curves in $T^2$ and define the set $x=\eta_*^{-1}(\{\delta\})\cap X$. Proving our desired result is equivalent to proving that $card(x)\leq 2$. It follows from Definition~\ref{def: closed curves} that non-separating closed curves are essential in $\twicepuncturedtorus$ if and only if they are in $T^2$. Thus, if $\delta$ is not a homotopy class of essential curves, then $card(x)=0$. Therefore, we assume that $\delta$ is a class of essential curves. For sake of contradiction, assume $x= \{\alpha,\beta,\gamma\}$. Choose $\alphahat$, $\hat{\beta}$ and $\hat{\gamma}$ as their respective representatives so that they pairwise intersect minimally. Using the previous lemma, we claim that $i(\alpha, \beta) = i(\beta, \gamma) = i(\alpha, \gamma) = 0$. Indeed, $i(\alpha, \beta) = i(\eta_*(\alpha), \eta_*(\beta))=i(\delta, \delta)=0$. Thus, $\alphahat, \betahat,$ and $\gammahat$ are disjoint. Cutting $T^2$ along $\eta \circ \alphahat$ and $\eta \circ \betahat$ thus yields two annuli, which we will denote $A_1$ and $A_2$. Without loss of generality, assume that $\eta \circ \gammahat \subset A_1$. Since $\eta \circ \alphahat \simeq \eta \circ \hat{\beta} \simeq \eta \circ \hat{\gamma}$, we see that $\eta \circ \hat{\gamma}$ is parallel to the boundary of $A_1$. Then cutting along $\eta \circ \hat{\gamma}$ separates $A_1$ into two annuli, and we see that cutting along $\eta \circ \hat{\alpha}$, $\eta \circ \hat{\beta}$, and $\eta \circ \hat{\gamma}$ separates $T^2$ into three annuli. Note that if we remove exactly two points from this disjoint union of annuli, then at least one of the annuli will remain unchanged. Thus, we can conclude that at least two of these curves bound an annulus in $\twicepuncturedtorus$ and are therefore homotopic in this surface. Therefore, at least two of the homotopy classes in $x$ have to be identical, so $card(x) \leq 2$. 
\end{proof}
\subsection{Tri-Pants and their constructions}\label{sec: tri-pants}
As their names suggest, the study of tri-pants and the tri-pants graph are inherently related; specifically, one cannot expect to digest any results pertaining to the tri-pants graph without first comprehending tri-pants. In this section, we outline the criteria satisfied by tri-pants, describe some of their properties, and begin to discuss the relationships between differing tri-pants.
\begin{defn}\label{def: tripantsI}
A {\em tri-pant} on $\twicepuncturedtorus$ is a collection of six homotopy classes of simple closed curves $T=\{\alpha, \alpha', \beta, \beta', \gamma, \gamma'\}$ satisfying the following criteria: \begin{enumerate}
    \item[(1)] All curves in $T$ are essential and non-separating.
    \item[(2)] Without loss of generality, we have $\{\alpha, \alpha'\}$, $\{\beta, \beta'\}$, $\{\gamma, \gamma'\} \in \p$. 
    \item[(3)] For every pair of distinct elements $\zeta, \xi\in T$, we have $i(\zeta, \xi)=1$ unless $\{\zeta,\xi\}$ is equal to $\{\alpha, \alpha'\}$, $\{\beta, \beta'\}$, or $\{\gamma, \gamma'\}$.
\end{enumerate}
\end{defn}

\subsection*{Another note on notation}
We denote tri-pants by $T$ and $T'$ most frequently. Later, in Section~\ref{sec: tri-arc}, we define tri-arcs to be analogous sets of homotopy classes of proper arcs. To illustrate the bijection between tri-pants and tri-arcs (which we will investigate fully), we indicate tri-arcs with an asterisk in subscript; $T_*$ and $T_*'$ thus appear most frequently in our paper. Lastly, in Section~\ref{sec:moves}, we  introduce an oriented tri-arc which we label $T_{\pi_1},T_{\pi_1}'$ to emphasize that this object is a subset of the fundamental group of the surface.

Now that we have built more intuition on the properties of a tri-pant, we see that there are infinitely many choices of a tri-pant on $\twicepuncturedtorus$. At times, it can seem daunting to differentiate between them, so for the next portion of this paper we discuss similarity. 
\begin{defn}
Two tri-pants $T$ and $T'$ {\em differ by an elementary move} if they share two pants decompositions.
\end{defn}
While this definition gives us a concrete way to relate two tri-pants, it can be difficult to check if this property holds for a wide variety of reasons. If we view $T$ and $T'$ pictorially, the number (and complexity) of the curves often clouds our understanding. Alternatively, if we view $T$ and $T'$ algebraically as elements of the fundamental group of $\twicepuncturedtorus$, we quickly encounter the difficulties of working in a more complicated group structure. Thus, we seek simpler, but equally valid, representations of tri-pants to ease our study of their properties. In the next section, we outline one such representation and its relationship with tri-pants.
\subsection{The Tri-Arc presentation}\label{sec: tri-arc} Is there a way to represent tri-pants that simultaneously simplifies their pictorial and algebriac analysis? In this section, we answer this question with a resounding ``yes!" by introducing tri-arcs to our discussion. We will outline a one-to-one correspondence between pants decompositions in $\p$ and isotopy classes of simple, essential, unoriented arcs of the twice-punctured torus based at $\bullet$. In the process of studying tri-arcs, we reduce the number of pictorial components from six to three and reduce the complexity of working in the fundamental group by a factor of $\Z$.

\begin{defn}
Define $\A$ to be the set of isotopy classes of essential, unoriented simple arcs on $\twicepuncturedtorus$ based at $\bullet$ (that is, $a(0)=a(1)=\bullet$).
\end{defn}

\begin{defn}
We say that two homotopy (or isotopy) classes of arcs $a$ and $b$ based at the same point in a surface $S$ {\em bound a cylinder} if there exist representatives $\hat{a}$ and $\hat{b}$ so that a connected component of $S\setminus(\hat{a}\cup\hat{b})$ is homeomorphic to $S^1\times(0,1)$, the interior of a cylinder.
\end{defn}

\begin{defn}
A {\em tri-arc} $T_*=\{a, b, c\}$ is a collection of three distinct elements of $\A$ that pairwise intersect only at $\bullet$ and do not bound a cylinder in $\twicepuncturedtorus$.
\end{defn}

We now begin to investigate the relationship between tri-pants and tri-arcs. Define the map $\Phi: \A\to\p$ where $a\mapsto\{\alpha_a, \alpha'_a\}$ as follows: for $a\in\A$, choose a simple representative $\hat{a}$ and consider the simple closed curve $\hat{a}\cup\{\bullet\}\subset\oncepuncturedtorus$. Take an open tubular neighborhood $N_{\hat{a}}$ of $\hat{a}\cup\{\bullet\}$ homeomorphic to $S^1\times(0,1)$. Then $\partial N_{\hat{a}}\subset\twicepuncturedtorus$ consists of two simple closed curves $\alphahat_a,\alphahat'_a$, by the construction of $N_{\hat{a}}$. We define $\alpha_a,\alpha'_a$ to be the homotopy classes (as simple closed curves in $\twicepuncturedtorus$) of $\alphahat_a,\alphahat'_a$, respectively. 

\begin{claim}\label{clm: a, p corresp}
We claim that $\{\alpha_a, \alpha'_a\}$ is an element of $\mathcal{P}$, independent of the chosen neighborhood of $a$ (see e. g. \cite[Theorem 5.3]{hirsch} for the uniqueness of tubular neighborhoods up to isotopy), and the map $\Phi : \mathcal{A}\to \mathcal{P}$ where $a\mapsto \{\alpha_a, \alpha'_a\}$ is a bijection. A proof of this statement follows from the two subsequent lemmas.
\end{claim}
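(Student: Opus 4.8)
\textbf{Proof proposal for Claim \ref{clm: a, p corresp}.}

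The plan is to establish the claim by verifying three things in turn: that $\Phi$ is well-defined (i.e.\ lands in $\p$ and is independent of the choices made), that $\Phi$ is injective, and that $\Phi$ is surjective. As stated, the bulk of this will be deferred to the two lemmas that follow, so here I would set up the geometric picture that makes all three parts transparent, and then indicate which lemma does what.

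First I would address well-definedness. Given $a \in \A$ with simple representative $\hat a$, the set $\hat a \cup \{\bullet\}$ is an essential simple closed curve in $\oncepuncturedtorus$ — essential precisely because $a$ is an essential arc (it is not null-homotopic, not parallel to a puncture, and does not bound a punctured disk, so the corresponding closed curve bounds neither a disk nor a punctured disk). A tubular neighborhood $N_{\hat a}$ is an open annulus $S^1 \times (0,1)$ whose core is this curve, and $\partial N_{\hat a}$ consists of two simple closed curves $\alphahat_a, \alphahat'_a$ in $\twicepuncturedtorus$; the puncture $\circ$ lies outside $N_{\hat a}$ (we may shrink the neighborhood so that this holds), hence in the complement. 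I would argue that $\twicepuncturedtorus \setminus (\alphahat_a \cup \alphahat'_a)$ has two components: one is the annulus $N_{\hat a}$ minus the puncture $\bullet$ that sits on its core, which is a once-punctured annulus; the other is the complement of $\overline{N_{\hat a}}$ in $T^2$, again with the puncture $\circ$ removed, and a Euler-characteristic / classification argument identifies it as a once-punctured annulus as well. Thus $\{\alpha_a, \alpha'_a\}$ is a pants decomposition into two once-punctured annuli, so it lies in $\p$ (using Lemma~\ref{lem: pantsdecomp} in reverse, or directly). Independence of the representative $\hat a$ follows because isotopic arcs based at $\bullet$ give isotopic closed curves $\hat a \cup \{\bullet\}$ in $\oncepuncturedtorus$, and independence of the neighborhood follows from uniqueness of tubular neighborhoods up to isotopy (\cite[Theorem 5.3]{hirsch}); isotopic annuli have isotopic boundary curves. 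Also, $\alphahat_a \not\simeq \alphahat'_a$ in $\twicepuncturedtorus$ by Corollary~\ref{cor: homotopicpantsdecomp}, so the pair is genuinely a pair of distinct classes.

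For injectivity and surjectivity I would hand off to the two lemmas, but the idea to flag is the construction of an inverse. Given $\{\alpha, \alpha'\} \in \p$, Lemma~\ref{lem: pantsdecomp} says cutting along representatives yields two once-punctured annuli, one containing $\bullet$; the core of that annulus, together with an arc from $\bullet$ running once around it (equivalently: collapse the two boundary curves $\alpha, \alpha'$ back together and record the spanning arc through $\bullet$), produces a well-defined isotopy class in $\A$, and one checks this is inverse to $\Phi$. Injectivity amounts to: if $\Phi(a) = \Phi(b)$ then the boundary curves of the two neighborhoods are isotopic, which forces the cores, hence the arcs, to be isotopic. Surjectivity is exactly the statement that every element of $\p$ arises this way, which is the content of the reconstruction just described.

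The main obstacle I anticipate is the injectivity step — specifically, upgrading "the two boundary curves of $N_{\hat a}$ and $N_{\hat b}$ are pairwise isotopic in $\twicepuncturedtorus$" to "$\hat a \simeq \hat b$ as arcs based at $\bullet$." The subtlety is that an isotopy of the boundary curves need not a priori be compatible with fixing the basepoint $\bullet$, and one must rule out the possibility that $a$ and $b$ are distinct arcs whose neighborhoods happen to have isotopic boundaries (this is where the "do not bound a cylinder" condition and the careful bookkeeping of the puncture $\bullet$ on the core enter). I would handle this by working in $\oncepuncturedtorus$: the closed curves $\hat a \cup \{\bullet\}$ and $\hat b \cup \{\bullet\}$ are the cores of the respective annuli, isotopic boundaries give isotopic cores in $\oncepuncturedtorus$, and then a pointed-isotopy argument (tracking $\bullet$, using that the mapping class group element realizing the core isotopy can be taken to fix a point) recovers $a \simeq b$ in $\A$. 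The remaining verifications (essentiality transfers under the correspondence, non-separating-ness, the cylinder condition matching non-homotopy of the curve pair) are routine Euler-characteristic and surface-classification checks.
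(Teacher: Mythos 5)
Your proposal follows essentially the same route as the paper: Lemma~\ref{lem: alpha in P} establishes that $\Phi(a)\in\p$ via an Euler-characteristic argument exactly as you outline (with independence of the neighborhood from uniqueness of tubular neighborhoods), and Lemma~\ref{lem: bijection} proves bijectivity by constructing the same inverse $\Psi$ you describe, namely cutting along $\hat{\alpha},\hat{\alpha}'$ and taking the unique non-trivial simple arc class based at $\bullet$ in the once-punctured annulus containing $\bullet$. The injectivity subtlety you flag is dissolved in the paper by checking both composites $\Phi\circ\Psi$ and $\Psi\circ\Phi$ directly, where the uniqueness of that arc class in the punctured annulus does the work your ``pointed-isotopy'' step was meant to do.
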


\begin{lem}\label{lem: alpha in P}
Let $a\in\A$. Then $\Phi(a)=\{\alpha_a, \alpha'_a\}$ is an element of $\p$. 
\end{lem}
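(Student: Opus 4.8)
The plan is to show that the two boundary curves $\alphahat_a,\alphahat'_a$ of a tubular neighborhood $N_{\hat a}$ of $\hat a\cup\{\bullet\}$ in $\oncepuncturedtorus$ form a pants decomposition of $\twicepuncturedtorus$ by verifying directly that cutting along them yields two pieces, each homeomorphic to the interior of a pair of pants, and then checking that both curves are essential and non-separating so that $\{\alpha_a,\alpha'_a\}$ genuinely lies in $\p$. First I would observe that, since $\hat a$ is an essential simple arc in $\twicepuncturedtorus$ based at $\bullet$, the closed curve $\hat a\cup\{\bullet\}$ is a \emph{simple} closed curve in $\oncepuncturedtorus$, and it is essential there: if it bounded a disk or a punctured disk or were boundary-parallel in $\oncepuncturedtorus$, one could push that back to show $a$ is inessential in $\twicepuncturedtorus$ (it would be homotopic to a marked point or bound a punctured disk), contradicting $a\in\A$. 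A tubular neighborhood of this curve in the torus is an annulus $N_{\hat a}\cong S^1\times(0,1)$, well-defined up to isotopy by uniqueness of tubular neighborhoods \cite[Theorem 5.3]{hirsch}; in $\oncepuncturedtorus$ we may take $N_{\hat a}$ to contain $\circ$ in a controlled position, but since $\circ\notin\hat a\cup\{\bullet\}$ we can in fact choose $N_{\hat a}$ disjoint from $\circ$. So inside $\twicepuncturedtorus$ we have an honest embedded annulus $N_{\hat a}$ whose core is isotopic to $\hat a\cup\{\bullet\}$ (now carrying the extra puncture $\bullet$ on its core? — no: $\bullet$ is a puncture of $\twicepuncturedtorus$, so actually the core passes through $\bullet$, meaning in $\twicepuncturedtorus$ the set $N_{\hat a}\setminus\{\bullet\}$ is a once-punctured annulus). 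Its frontier $\partial N_{\hat a}=\alphahat_a\sqcup\alphahat'_a$ consists of two disjoint simple closed curves in $\twicepuncturedtorus$, disjoint from $\bullet$ and from $\circ$.

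Next I would analyze $\twicepuncturedtorus\setminus(\alphahat_a\cup\alphahat'_a)$. It has two kinds of pieces: the once-punctured annulus $N_{\hat a}\setminus\{\bullet\}$ (punctured at $\bullet$, with its two boundary circles $\alphahat_a,\alphahat'_a$), and the complement $T^2\setminus(N_{\hat a}\cup\{\circ\})$. Since $\hat a\cup\{\bullet\}$ is a \emph{non-separating} simple closed curve in $T^2$ (any essential simple closed curve on the torus is non-separating), $T^2\setminus N_{\hat a}$ is connected with two boundary circles, hence is an annulus; removing $\circ$ makes it a once-punctured annulus with boundary $\alphahat_a\sqcup\alphahat'_a$. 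Here I should justify that the core of $\hat a\cup\{\bullet\}$ is essential hence non-separating in $T^2$: its image under the inclusion $\oncepuncturedtorus\hookrightarrow T^2$ is not null-homotopic because $a$ is essential and by Lemma~\ref{lem:intleq2}-type reasoning (or directly: a null-homotopic or boundary-parallel arc in $\twicepuncturedtorus$ would force $\hat a\cup\{\bullet\}$ to bound a disk or once-punctured disk in $\oncepuncturedtorus$, which we ruled out). A once-punctured annulus is exactly $S_{0,2,1}$, i.e.\ a pair of pants with $b+n=3$; so both components are interiors of pairs of pants, and $\{\alphahat_a,\alphahat'_a\}$ is a pants decomposition of $\twicepuncturedtorus$.

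Finally I would check the remaining conditions of $\p$: both $\alphahat_a$ and $\alphahat'_a$ are essential (they each bound a once-punctured annulus on one side — neither a disk nor a punctured disk nor boundary-parallel, since $\twicepuncturedtorus$ has no boundary) and non-separating in $\twicepuncturedtorus$. For non-separating: a neighborhood of the core curve of $T^2$ deformation retracts onto $\alphahat_a$ on one side and onto $\alphahat_a'$ on the other, and since that core is non-separating in $T^2$, one checks the complement of a single $\alphahat_a$ in $\twicepuncturedtorus$ stays connected — concretely $\twicepuncturedtorus\setminus\alphahat_a$ is obtained by gluing the once-punctured annulus and the once-punctured annulus $T^2\setminus(N_{\hat a}\cup\{\circ\})$ along the single circle $\alphahat'_a$, which is connected. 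The same argument applies to $\alphahat'_a$. I would also note $\alpha_a\neq\alpha'_a$ in $\twicepuncturedtorus$: they cobound the once-punctured annulus $N_{\hat a}\setminus\{\bullet\}$ which is \emph{not} an unpunctured annulus, so by the Corollary following Lemma~\ref{lem: pantsdecomp} (or directly from Lemma~\ref{lem: pantsdecomp}) they are not homotopic. Hence $\Phi(a)=\{\alpha_a,\alpha'_a\}\in\p$.

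\textbf{Main obstacle.} The routine-looking but genuinely delicate point is the passage between arcs in $\twicepuncturedtorus$ and closed curves in $\oncepuncturedtorus$: one must carefully argue that \emph{essentiality of the arc $a$} (not boundary-parallel, not homotopic to a marked point, not bounding a punctured disk) translates precisely into \emph{essentiality of $\hat a\cup\{\bullet\}$ in $\oncepuncturedtorus$} and thence into it being a non-separating simple closed curve on $T^2$ — and also that the construction of $\alpha_a,\alpha'_a$ is independent of the representative $\hat a$ and of the neighborhood $N_{\hat a}$, which is where uniqueness of tubular neighborhoods up to isotopy \cite[Theorem 5.3]{hirsch} and the fact that isotopic arcs have isotopic regular neighborhoods do the work. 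Everything else is a straightforward Euler-characteristic / cutting bookkeeping once the core curve is known to be essential and non-separating.
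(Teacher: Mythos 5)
Your proposal is correct and follows essentially the same route as the paper's proof: both establish that $\hat a\cup\{\bullet\}$ is an essential simple closed curve in $\oncepuncturedtorus$, deduce that the boundary curves $\alphahat_a,\alphahat'_a$ are essential, non-separating and non-peripheral in $\twicepuncturedtorus$, and identify the two complementary pieces of the tubular neighborhood as once-punctured annuli, hence a pants decomposition in $\p$. The only minor differences are cosmetic: the paper identifies the outer piece by an Euler-characteristic additivity computation ($\chi(M)=-1$, two boundary circles, one puncture, hence genus $0$), whereas you invoke the standard fact that cutting $T^2$ along a non-separating simple closed curve yields an annulus, and you additionally spell out the distinctness $\alpha_a\neq\alpha'_a$, which the paper leaves implicit.
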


\begin{proof}
We must first prove that there exist representatives $\alphahat_a,\alphahat'_a$ of $\alpha_a, \alpha_a'$, respectively, which are non-separating and non-peripheral. To do this, first fix a representative $\hat{a}\in a$. Note that there is only one homotopy class of non-trivial simple arcs in the punctured annulus $N_{\hat{a}}\setminus\{\bullet\}$, based at $\bullet$. We may take a simple representative $\hat{a}$ and then define the simple closed curve $\tilde{a}:=\hat{a} \cup \{\bullet\}\subset\oncepuncturedtorus$, which is homotopic to both representatives $\hat{\alpha}_a$ and $\hat{\alpha}_a'$ in $\oncepuncturedtorus$. Since $\hat{a}$ is essential as an arc in $\twicepuncturedtorus$, it does not bound a disk or a punctured disk within this surface, implying that $\tilde{a}$ is also essential as a simple closed curve in $\oncepuncturedtorus$ and therefore $\hat{\alpha}_a, \hat{\alpha}_a'$ are as well. Note that all simple separating curves on $T^2$ bound disks, so all simple separating curves on $\oncepuncturedtorus$ are non-essential. Thus, $\alphahat_a$ and $\alphahat'_a$ are non-separating in $\oncepuncturedtorus$, so $(\oncepuncturedtorus)\setminus \hat{\alpha}_a$ is connected. Because connectedness of a surface is not changed by removing a single point, $(\twicepuncturedtorus)\setminus\hat{\alpha}_a$ is also connected. The same argument applies to $\hat{\alpha}_a'$, so both representatives are non-separating in $\twicepuncturedtorus$. Now, note that if a simple curve is peripheral in $\twicepuncturedtorus$, then it either bounds a disk or a punctured disk in $\oncepuncturedtorus$. However, because $\alphahat_a$ and $\alphahat'_a$ are essential in $\oncepuncturedtorus$, this cannot be the case. Therefore, $\alphahat_a$ and $\alphahat_a'$ are also non-peripheral in $\twicepuncturedtorus$.

Now, we prove that $\alphahat_a,\alphahat_a'$ gives a pants decomposition of $S$. Because $N_{\hat{a}}\setminus\{\bullet\}$ is homeomorphic to the interior of a punctured annulus, if we let $\overline{N_{\hat{a}}}$ denote $N_{\hat{a}}\cup\partial N_{\hat{a}}$, then $\chi(\overline{N_{\hat{a}}}\setminus \{\bullet\})=-1$. Let $M:=\twicepuncturedtorus\setminus N_{\hat{a}}$. Then $$\chi(\overline{N_{\hat{a}}}) + \chi(M) = \chi(\twicepuncturedtorus)=-2,$$ so $\chi(M) = -1$. We know that $M$ must have two boundary components, $\alphahat_a$ and $\alphahat_a'$, as well as one puncture, $\circ$, implying that $M$ has genus 0 and is thus also homeomorphic to a punctured annulus. Because punctured annuli are pairs of pants, $\{\alphahat_a, \alpha_a'\}$ gives a pants decomposition of $\twicepuncturedtorus$. Therefore, $\{\alpha_a,\alpha_a'\}\in\mathcal{P}$.
\end{proof}

\begin{lem}\label{lem: bijection}
The map $\Phi: \A \to \p$ where $a\mapsto \{\alpha_a, \alpha'_a\}$ is a bijection.
\end{lem}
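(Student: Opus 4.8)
The plan is to establish that $\Phi$ is a bijection by constructing an explicit inverse map $\Psi : \p \to \A$ and checking that $\Psi \circ \Phi = \mathrm{id}_{\A}$ and $\Phi \circ \Psi = \mathrm{id}_{\p}$. The construction of $\Psi$ should reverse the tubular neighborhood procedure: given a pants decomposition $\{\alpha, \alpha'\} \in \p$, pick disjoint representatives $\alphahat, \alphahat'$ and consider the once-punctured annulus component $A$ of $\twicepuncturedtorus \setminus (\alphahat \cup \alphahat')$ that contains $\bullet$ (guaranteed to exist and be unique by Lemma~\ref{lem: pantsdecomp}, which places exactly one puncture in each complementary pair of pants). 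Inside the closed once-punctured annulus $\overline{A}$ with the puncture at $\bullet$, there is a unique isotopy class of essential simple arc based at $\bullet$ running from $\bullet$ to itself around the core; set $\Psi(\{\alpha,\alpha'\})$ to be the isotopy class of this arc, viewed as an element of $\A$. I would first check this is well-defined: independence of the choice of disjoint representatives follows from the change-of-coordinates principle / uniqueness of the complementary region up to isotopy, and essentiality of the resulting arc follows because the two boundary curves $\alphahat,\alphahat'$ are essential and non-separating (by the corollary after Lemma~\ref{lem: pantsdecomp}, they are homotopic in $T^2$ but not in $\twicepuncturedtorus$, which is exactly what forces the arc to be essential and non-peripheral in $\twicepuncturedtorus$).

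Next I would verify the two composition identities. For $\Psi \circ \Phi = \mathrm{id}$: starting from $a \in \A$, the region $M = \twicepuncturedtorus \setminus N_{\hat a}$ computed in the proof of Lemma~\ref{lem: alpha in P} is the once-punctured annulus $\it not$ containing $\bullet$, while $\overline{N_{\hat a}} \setminus \{\bullet\}$ is the once-punctured annulus containing $\bullet$; the core arc of the latter is isotopic to $\hat a$ itself because $N_{\hat a}$ is a tubular neighborhood of $\hat a \cup \{\bullet\}$ and there is a unique essential arc based at $\bullet$ in a once-punctured annulus. Hence $\Psi(\Phi(a)) = a$. For $\Phi \circ \Psi = \mathrm{id}$: starting from $\{\alpha,\alpha'\} \in \p$, let $a = \Psi(\{\alpha,\alpha'\})$ be the core arc of the once-punctured annulus $A$; then $a \cup \{\bullet\}$ deformation retracts $A$, so a tubular neighborhood $N_{\hat a}$ of $\hat a \cup \{\bullet\}$ in $\oncepuncturedtorus$ can be isotoped to (a slightly shrunk copy of) $A$, whence its two boundary curves are isotopic to $\alphahat$ and $\alphahat'$. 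Therefore $\Phi(\Psi(\{\alpha,\alpha'\})) = \{\alpha, \alpha'\}$. Together with Lemma~\ref{lem: alpha in P} (which shows $\Phi$ lands in $\p$), this proves $\Phi$ is a bijection.

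An alternative, slightly more hands-off route is to prove injectivity and surjectivity separately without naming $\Psi$. Injectivity: if $\Phi(a) = \Phi(b)$, then $\hat a$ and $\hat b$ can each be isotoped to the core arc of the common once-punctured annulus cobounded by representatives of $\alpha_a = \alpha_b$ and $\alpha'_a = \alpha'_b$; uniqueness of the essential based arc in a once-punctured annulus then forces $a = b$. Surjectivity: given $\{\alpha,\alpha'\} \in \p$, the once-punctured annulus containing $\bullet$ supplies an arc $a$ with $\Phi(a) = \{\alpha, \alpha'\}$ by the same retraction argument as above. This is really the same proof, just repackaged.

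The main obstacle, and the step deserving the most care, is the repeated appeal to ``there is a unique isotopy class of essential simple arc based at $\bullet$ in a once-punctured annulus, and a tubular neighborhood of that arc union $\bullet$ recovers the annulus up to isotopy.'' Making this rigorous requires: (i) a clean statement that in $S_{0,2,1}$ (annulus with one puncture) the based arc from the puncture to itself separating the two boundary components is unique up to isotopy — this can be done by lifting to the universal cover of the annulus or by a direct innermost-arc surgery argument; and (ii) the uniqueness of tubular neighborhoods up to ambient isotopy, already cited as \cite[Theorem 5.3]{hirsch} in Claim~\ref{clm: a, p corresp}, together with the observation that isotopic arcs in $\oncepuncturedtorus$ have ambiently isotopic tubular neighborhoods and hence isotopic boundary curves — the latter needs the arcs to be essential so that the boundary curves do not collapse. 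I would isolate (i) as a short preliminary sub-lemma about arcs in the once-punctured annulus and then feed it into both composition checks. Everything else is bookkeeping with Euler characteristics and the change-of-coordinates principle, already rehearsed in the proof of Lemma~\ref{lem: alpha in P}.
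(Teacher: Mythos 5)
Your proposal is correct and follows essentially the same route as the paper: both construct the explicit inverse $\Psi$ by cutting along disjoint representatives of $\{\alpha,\alpha'\}$, taking the unique class of essential arcs based at $\bullet$ in the once-punctured annulus containing $\bullet$, and verifying the two composition identities via the fact that the boundary curves of a tubular neighborhood of $\hat{a}\cup\{\bullet\}$ recover $\alpha$ and $\alpha'$ up to homotopy in $\twicepuncturedtorus$ (the paper phrases this through homotopies to $\hat{a}\cup\{\bullet\}$ in $\oncepuncturedtorus$ together with the observation that $\circ$ lies outside the neighborhoods, while you phrase it through uniqueness of tubular neighborhoods, which is the same point).
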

\begin{proof}
To prove bijectivity, we introduce a candidate inverse $\Psi: \p \to \A$. Let $\{\alpha, \alpha'\}\in\p$. By definition, cutting along disjoint representatives $\hat{\alpha},\hat{\alpha}'$ of these two homotopy classes yields two disjoint pairs of pants denoted $P_1$ and $P_2$, each containing a single puncture. Without loss of generality, assume that $\bullet\in P_1$ and consider $P_1$ as a once-punctured annulus. Inside this annulus, there exists a unique homotopy class of simple and non-trivial unoriented arcs based at $\bullet$. Let $a$ be this class of arcs and note that we can find a representative $\hat{a}\in a$ which is parallel to the boundary. That is, $\hat{a} \cup \{\bullet\}$ is homotopic as a simple closed curve to $\hat{\alpha}$ in $\oncepuncturedtorus$, so it is also essential as a simple closed curve in $\oncepuncturedtorus$. It follows that $\hat{a}$ is essential as an arc, as it does not bound a disk or a punctured disk. Therefore, we may conclude that $a\in\A$ and define $\Psi(\{\alpha,\alpha'\}):=a$. We can now show that $(\Phi\circ\Psi)(\{\alpha, \alpha'\})=\{\alpha, \alpha'\}$ and $(\Psi\circ\Phi)(a)=a$ for any $\{\alpha,\alpha'\}\in\p$ and $a \in \A$. First, consider $(\Phi\circ\Psi)(\{\alpha, \alpha'\})$. We denote $a:=\Psi(\{\alpha, \alpha'\})$. Taking any tubular neighborhood $N_{\hat{a}}$ of a representative $\hat{a}$ in $\oncepuncturedtorus$ we obtain an element $\{\beta,\beta'\} \in \p$ (by Lemma \ref{lem: alpha in P}) such that $\{\beta, \beta'\} = \Phi(a)$. However, as shown above, $\hat{a}\cup\bullet\simeq\hat{\alpha},\hat{\alpha}'$ as simple closed curves in $\oncepuncturedtorus$ since $\hat{\alpha},\hat{\alpha}'$ bound a tubular neighborhood of $\hat{a}$. Because the same is true for representatives $\hat{\beta},\hat{\beta}'$, these curves are also homotopic to $\hat{a}\cup\bullet$ in $\oncepuncturedtorus$. Because $\circ$ is not in either neighborhood by construction, this implies that $\hat{\alpha}\simeq\hat{\beta}$ and $\hat{\alpha}'\simeq\hat{\beta}'$ in $\twicepuncturedtorus$. Thus, $\{\alpha, \alpha'\} = \{\beta, \beta'\}$ so $(\Psi \circ \Phi)(\{\alpha,\alpha'\}) = \{\alpha, \alpha'\}$. Now consider $(\Psi \circ \Phi)(a)$. Taking a tubular neighborhood $N_{\hat{a}}$ of a representative $\hat{a}$ in $T^2\setminus\{\circ\}$ gives boundary curves $\partial N_{\hat{a}} = \{\alpha,\alpha'\} = \Phi(a) \in \p$ by Lemma~\ref{lem: alpha in P}. By construction, representatives of $\alpha, \alpha'$ bound a punctured annulus containing $\hat{a}$. Going through the process outlined for $\Psi$, we locate the unique homotopy class of non-trivial arcs based at the puncture $\bullet$. By uniqueness, this class must be $a$. Thus $(\Psi \circ \Phi)(a) = a$. It follows now that $\Psi = \Phi^{-1}$, and thus $\Psi$ must be bijective.
\end{proof}

Therefore, Claim~\ref{clm: a, p corresp} holds. However, the existence of a bijection between $\A$ and $\p$ is not enough to give us our desired one-to-one correspondence between tri-arcs and tri-pants. In order to prove this correspondence, we must prove that minimal intersection of the arcs $a$, $b$, and $c$ corresponds to minimal intersection of the associated pants decompositions $\Phi(a)$, $\Phi(b)$, and $\Phi(c)$.

\begin{claim}\label{clm: triarcs, tripants corresp}
There is a one-to-one correspondence between the sets of tri-arcs and tri-pants.
\end{claim}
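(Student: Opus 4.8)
The plan is to promote the bijection $\Phi : \A \to \p$ from Lemma~\ref{lem: bijection} to a bijection between tri-arcs and tri-pants, and the main content is showing that $\Phi$ (together with its ``curve'' version) translates the intersection-number conditions correctly. Concretely, given a tri-arc $T_* = \{a, b, c\}$, I would set $T := \Phi(a) \cup \Phi(b) \cup \Phi(c)$ and argue that $T$ is a tri-pant; conversely, given a tri-pant $T = \{\alpha, \alpha', \beta, \beta', \gamma, \gamma'\}$, I would apply $\Phi^{-1}$ to each of its three pants decompositions to obtain three arcs and argue they form a tri-arc. Since $\Phi$ is already a bijection on the level of pants decompositions, once both assignments are shown to be well-defined (land in the right set), they are automatically mutually inverse, and bijectivity of the correspondence follows. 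So the real work is entirely in verifying the two well-definedness claims, i.e.\ matching condition (3) of Definition~\ref{def: tripantsI} with the tri-arc conditions (pairwise intersecting only at $\bullet$, not bounding a cylinder).

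First I would handle the direction tri-arc $\to$ tri-pant. Conditions (1) and (2) of Definition~\ref{def: tripantsI} are immediate: essentiality and non-separatedness come from Lemma~\ref{lem: alpha in P}, and the three pairs being in $\p$ is the statement that $\Phi(a), \Phi(b), \Phi(c) \in \p$. The heart is condition (3): for $\zeta \in \Phi(a)$ and $\xi \in \Phi(b)$ (arcs $a \ne b$), I need $i(\zeta,\xi) = 1$. The strategy is to choose $\hat a, \hat b$ in minimal position, so they meet only at $\bullet$, then take thin tubular neighborhoods $N_{\hat a}, N_{\hat b}$ whose boundary curves are the representatives $\hat\zeta, \hat\xi$ of the four classes $\alpha_a, \alpha_a', \alpha_b, \alpha_b'$. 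Near the single intersection point $\bullet$ the picture is local: each of the two boundary curves of $N_{\hat b}$ crosses each of the two boundary curves of $N_{\hat a}$ exactly once in a neighborhood of $\bullet$, and away from $\bullet$ the neighborhoods (taken thin enough) are disjoint — so $n(\hat\zeta, \hat\xi) = 1$, giving $i(\zeta,\xi) \le 1$. For the lower bound $i(\zeta,\xi) \ge 1$, I would argue that if $i(\zeta,\xi) = 0$ then $\zeta$ and $\xi$ (equivalently their images $\tilde a, \tilde b$ on $\oncepuncturedtorus$, using Lemma~\ref{lem:intleq2}) could be realized disjointly, and disjoint essential simple closed curves on the torus are homotopic there; combined with the $\bullet$-basepoint data this would force $a$ and $b$ to bound a cylinder, contradicting the tri-arc hypothesis. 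Finally I must exclude the ``exceptional pairs'': $\{\alpha_a, \alpha_a'\}$ is a pants decomposition by construction, and $\alpha_a \ne \alpha_b$ for $a \ne b$ (otherwise $\Phi$ would fail injectivity, or again a cylinder appears), so the six classes are genuinely distinct and the only non-intersecting pairs are exactly the three pants pairs.

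For the reverse direction, given a tri-pant $T$, set $a = \Phi^{-1}(\{\alpha,\alpha'\})$, $b = \Phi^{-1}(\{\beta,\beta'\})$, $c = \Phi^{-1}(\{\gamma,\gamma'\})$; these lie in $\A$ by Lemma~\ref{lem: bijection}. I need: they are pairwise distinct (clear, since $\Phi$ is injective and the three pants pairs are distinct), they pairwise meet only at $\bullet$, and no two bound a cylinder. The last two are the contrapositive of the obstructions above: if $a$ and $b$ bounded a cylinder in $\twicepuncturedtorus$, then their neighborhood boundary curves would be mutually homotopic or disjointly realizable, forcing some $i(\zeta,\xi) = 0$ for $\zeta \in \Phi(a)$, $\xi \in \Phi(b)$, contradicting condition (3); and realizing the arcs so they meet only at $\bullet$ is possible precisely because the associated curves can be taken with $i = 1$ (a single transverse crossing), which, tracked back through the tubular-neighborhood construction, localizes the arc intersection at the basepoint. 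The main obstacle, and where I would spend the most care, is this bookkeeping near $\bullet$: making rigorous that ``boundary curves of a thin tubular neighborhood of $\hat a \cup \{\bullet\}$ meet boundary curves of a thin neighborhood of $\hat b \cup \{\bullet\}$ in exactly one point'' when $\hat a \cap \hat b = \{\bullet\}$, and conversely that minimal intersection $1$ of the curves can always be pushed into a single transverse crossing of the arcs at $\bullet$. This is essentially a local normal-form argument for two arcs through a common endpoint together with the uniqueness of tubular neighborhoods up to isotopy (cf.\ \cite[Theorem 5.3]{hirsch}), combined with Lemma~\ref{lem:intleq2} and Lemma~\ref{lem: cardleq2} to control what happens on the once-punctured torus.
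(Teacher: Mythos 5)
Your proposal is correct and takes essentially the same route as the paper: the paper likewise promotes the bijection $\Phi$ of Lemma~\ref{lem: bijection} and verifies that the arc conditions (meeting only at $\bullet$, no bounded cylinder) translate into the intersection-number conditions of Definition~\ref{def: tripantsI} via thin tubular neighborhoods and a local analysis at $\bullet$, which is precisely the content of Lemmas~\ref{lem: abintersectonce} and~\ref{lem: abcformtrip}. The only notable difference is that the paper's Lemma~\ref{lem: abcformtrip} additionally constructs three \emph{simultaneous} representatives meeting only at $\bullet$ (taking $\hat{c}$ as a diagonal of the square $N_{\hat{a}}\cap N_{\hat{b}}$), whereas you reduce the triple case to three pairwise applications of the two-arc statement, which suffices for the claim as literally stated.
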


The following lemmas are the key ingredients for the proof of this claim.

\begin{lem}\label{lem: abintersectonce}
Let $a$ and $b$ be two distinct elements of $\mathcal{A}$. There exist representatives $\hat{a}$ and $\hat{b}$ based at the puncture $\bullet$ that do not bound a cylinder inside $T^2 \setminus \{\circ\}$ if and only if the associated pants decompositions $\Phi(a) = \{ \alpha, \alpha' \}$ and $\Phi(b) = \{ \beta, \beta' \}$ satisfy $i(\alpha, \beta) = i(\alpha, \beta') = i(\alpha', \beta) = i(\alpha', \beta') = 1$.
\end{lem}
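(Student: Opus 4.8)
The plan is to analyze the two implications separately, in each case passing between the once-punctured torus picture (where the arcs $\hat a, \hat b$ become the simple closed curves $\tilde a = \hat a \cup \{\bullet\}$, $\tilde b = \hat b \cup \{\bullet\}$) and the twice-punctured torus picture (where $\Phi(a), \Phi(b)$ are the boundary pairs of tubular neighborhoods). Throughout I will exploit the fact, established in the construction of $\Phi$ and in Lemma~\ref{lem: alpha in P}, that $\alpha, \alpha'$ are both freely homotopic to $\tilde a$ in $\oncepuncturedtorus$, and similarly $\beta, \beta'$ are freely homotopic to $\tilde b$, so that $i(\eta_*\alpha, \eta_*\beta) = i(\tilde a, \tilde b)$ in $\oncepuncturedtorus$. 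Since every curve involved is essential and non-separating, $i(\tilde a,\tilde b)$ is either $0$ (when $\tilde a \simeq \tilde b$, i.e. $a$ and $b$ bound a cylinder in $\oncepuncturedtorus$) or $\geq 1$; and by Lemma~\ref{lem:intleq2}, when the intersection numbers in $\twicepuncturedtorus$ are small they agree with those in $T^2$.

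For the forward direction, suppose representatives $\hat a, \hat b$ do not bound a cylinder in $\oncepuncturedtorus$. Then $\tilde a \not\simeq \tilde b$, so $i(\tilde a, \tilde b) \geq 1$, and since on the once-punctured torus two essential simple closed curves that are not homotopic and not disjoint intersect exactly once (a standard fact about $T^2\setminus\{\circ\}$, equivalently about the Farey graph), in fact $i(\tilde a, \tilde b) = 1$. Now I want to transfer this to the four pairs among $\{\alpha,\alpha'\}$ and $\{\beta,\beta'\}$. Take simple representatives of $\alpha, \alpha', \beta, \beta'$ sitting on the boundaries of thin tubular neighborhoods $N_{\hat a}, N_{\hat b}$; by making the neighborhoods thin and putting $\hat a, \hat b$ in minimal position (so they meet only at $\bullet$), the only intersections between $\partial N_{\hat a}$ and $\partial N_{\hat b}$ occur near $\bullet$, and a local picture at $\bullet$ shows each of the four boundary-boundary pairs crosses exactly once. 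This gives $i(\alpha,\beta), i(\alpha,\beta'), i(\alpha',\beta), i(\alpha',\beta') \leq 1$, and each is $\geq 1$ because, say, $i(\eta_*\alpha, \eta_*\beta) = i(\tilde a, \tilde b) = 1$ forces the intersection in $\twicepuncturedtorus$ to be at least $1$; combined with the upper bound we get equality $=1$ for all four.

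For the converse, suppose all four intersection numbers equal $1$. Then in particular each of $\alpha,\alpha',\beta,\beta'$ is distinct and $i(\alpha,\beta)=1 < 2$, so Lemma~\ref{lem:intleq2} gives $i(\tilde a, \tilde b) = i(\eta_*\alpha, \eta_*\beta) = 1 \neq 0$, hence $\tilde a \not\simeq \tilde b$ in $\oncepuncturedtorus$. If $\hat a$ and $\hat b$ bounded a cylinder $C \cong S^1 \times (0,1)$ in $\oncepuncturedtorus$, then one component of $\oncepuncturedtorus \setminus (\hat a \cup \hat b)$ is a cylinder, which forces $\tilde a \simeq \tilde b$ — a contradiction. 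So I would argue: a cylinder bounded by $\hat a$ and $\hat b$ in $\oncepuncturedtorus$ (these being arcs from $\bullet$ to itself) exhibits $\tilde a$ and $\tilde b$ as the two boundary circles of an embedded annulus, hence as freely homotopic simple closed curves, contradicting $i(\tilde a,\tilde b)=1$. Therefore no such cylinder exists, which is the desired conclusion. One should also record that the choice of representatives $\hat a \in a$, $\hat b \in b$ realizing minimal position can be taken with $\hat a \cap \hat b = \{\bullet\}$, so the "there exist representatives" quantifier is handled by taking minimal-position representatives.

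\textbf{Main obstacle.} The delicate step is the forward direction's local analysis at $\bullet$: one must argue carefully that thin tubular neighborhoods of $\tilde a$ and $\tilde b$ can be chosen so that their boundary curves intersect in exactly the minimal number of points, i.e. that pushing the single transverse crossing of $\tilde a$ and $\tilde b$ at $\bullet$ off the puncture yields exactly one crossing between each of the four pairs of boundary components, with no extra bigons created in $\twicepuncturedtorus$. This requires a genuine picture near $\bullet$ (the point where $\hat a$ and $\hat b$ meet, with four arc-ends emanating) together with an argument that the resulting position is minimal in $\twicepuncturedtorus$ — equivalently that none of the four boundary-boundary pairs bounds a bigon or a once-punctured bigon. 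I would handle this by combining the upper bound from the explicit local model with the lower bound from Lemma~\ref{lem:intleq2} as above, so that minimality follows for free once both bounds are in hand; the essential content is thus the explicit local construction rather than a separate bigon-removal argument.
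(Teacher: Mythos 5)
There are genuine gaps in both directions, and in each case the missing step is precisely the main content of the argument. In the forward direction, everything rests on your first assertion that if $\hat{a},\hat{b}$ (meeting only at $\bullet$) do not bound a cylinder then $\tilde{a}\not\simeq\tilde{b}$ in $\oncepuncturedtorus$; equivalently, that $\tilde{a}\simeq\tilde{b}$ forces a cylinder. This is not obvious and is exactly what the paper spends most of its proof on: one cuts $T^2$ along $\tilde{a}$ to get an annulus, shows the lift of $\hat{b}$ cannot be a loop based at a single boundary point because such a loop is either trivial (contradicting essentiality) or boundary-parallel, in which case $\hat{a}\simeq\hat{b}$ as based arcs in $T^2$, and then the hypothesis $a\neq b$ in $\A$ forces $\circ$ to lie in the pinched disk region between the two arcs, leaving the other complementary region an unpunctured annulus, i.e.\ a cylinder. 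Your proposal never uses $a\neq b$ or the location of $\circ$, so the assertion is a restatement of what has to be proved. Moreover, the ``standard fact'' you invoke --- that two non-homotopic, non-disjoint essential simple closed curves on $T^2\setminus\{\circ\}$ intersect exactly once --- is false (the $(1,0)$- and $(1,2)$-curves intersect twice; Farey edges record only the pairs with $i=1$). What you may legitimately use is that $\tilde{a}$ and $\tilde{b}$ meet in the single point $\bullet$, so $i(\tilde{a},\tilde{b})\le 1$, while for the lower bound $i(\alpha,\beta)\ge 1$ it suffices that $\tilde{a}\not\simeq\tilde{b}$, since disjoint essential curves on the once-punctured torus are homotopic. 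The local model at $\bullet$ giving the upper bound is fine and is essentially the paper's argument.

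In the converse, the sentence you relegate to a footnote --- that representatives can be taken with $\hat{a}\cap\hat{b}=\{\bullet\}$ ``by minimal position'' --- is the actual content of that direction. Two distinct classes in $\A$ need not admit representatives meeting only at the basepoint; it is exactly the hypothesis $i(\alpha,\beta)=i(\alpha,\beta')=i(\alpha',\beta)=i(\alpha',\beta')=1$ that makes this possible, and it has to be derived. The paper does this by putting $\hat{\alpha},\hat{\alpha}',\hat{\beta},\hat{\beta}'$ in minimal position, observing that the two once-punctured annuli they bound intersect in a punctured square, and choosing $\hat{a}\subset N_{\alpha}$ and $\hat{b}\subset N_{\beta}$ disjoint inside that square; it then verifies directly that the complement of $\hat{a}\cup\hat{b}$ is a single punctured square, so no component is a cylinder. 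Your alternative ending (a cylinder component would exhibit $\tilde{a}$ and $\tilde{b}$ as freely homotopic, contradicting $i(\tilde{a},\tilde{b})=1$) is an acceptable shortcut once such representatives have been produced --- though the claim that a cylinder component has $\tilde{a}$ and $\tilde{b}$ as its two ends also deserves a line of justification --- but it cannot substitute for their construction. To complete the proof you need to supply the annulus-cutting case analysis in the forward direction and the punctured-square construction (or an equivalent) in the converse.
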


\begin{proof} 

($\Longrightarrow$) Assume first that there are representatives $\hat{a}\in a,\hat{b}\in b$ intersecting only at $\bullet$ that do not bound a cylinder. We now consider $\hat{a} \cup \{\bullet\}$ as a simple closed curve inside $T^2$, as opposed to a proper arc inside $\oncepuncturedtorus$. Cut $T^2$ along $\hat{a} \cup \{\bullet\}$ to yield an annulus. More formally, there exists a compact annulus $A$ with $\partial A :=\{\tilde{a},\tilde{a}'\}$ and a homeomorphism $h:\tilde{a}\to\tilde{a}'$ such that the quotient $\faktor{A}{x= h(x)}\cong T^2$, $\partial A$ projects onto $\hat{a}\cup\{\bullet\}$, and there exist two points $x\in \tilde{a},x'\in \tilde{a}'$ which project onto $\{\bullet\}$. Further, since $\hat{b}$ intersects $a$ only at $\bullet$, there is a unique lift $\tilde{b}\subset \text{int(A)}$ homeomorphic to $\tilde{b}$, where int$(A):=A\setminus\partial A$. In this construction, the puncture $\circ$ will correspond to some point $y$ that lies in the interior of $A$. We know that $\hat{b}$ must have endpoints contained in $\{x,x'\}$; however, we want to exclude the possibility that the endpoints of the path $\tilde{b}$ coincide. By way of contradiction, suppose that $\tilde{b}$ is based at $x$. We may then view $\tilde{b}$ as an element of $\pi_1(A,x)$. Suppose now that $\tilde{b}$ is trivial, which implies the existence of an open disk $B\subset \text{int}(A)$ with boundary $\tilde{b}\cup\{x\}\subset A$. $B$ then projects onto a disk in $\oncepuncturedtorusblack$ bounded by $\hat{b}$. Thus, $\hat{b}$ either bounds a disk or a punctured disk in $\twicepuncturedtorus$, both of which contradict the fact that $b\in\A$. Therefore, $\tilde{b}$ must not be trivial. There is only one non-trivial element of $\pi_1(A,x)$ (up to orientation) which has a simple representative. This choice is parallel to the boundary curve $\tilde{a}$ as in Figure \ref{fig: non-trivial b hat}. However, since $x\in\tilde{a}$, we see that $\tilde{a}\simeq\tilde{b}$ as arcs based at $x$ in $A$. It follows that $\hat{a} \simeq \hat{b}$ as arcs in $T^2$ based at $\bullet$. Since we assumed $a\neq b$ as arcs based at $\bullet$ in $\oncepuncturedtorus$, this implies that $\circ$ lies in the region homeomorphic to the interior of an annulus bounded by $\hat{a}$ and $\hat{b}$. Because $\circ$ is disjoint from $a$, there is a unique point $y\in A$ disjoint from $\tilde{b}$ that projects to $\circ$. This implies that $y$ lies in the lift of the aforementioned annulus, the region in $A$ bounded by $\tilde{b}$ and $\tilde{a}$ in Figure~\ref{fig: non-trivial b hat}.

\begin{figure}[htbp]
    \centering
    \begin{overpic}
    [scale=0.6]{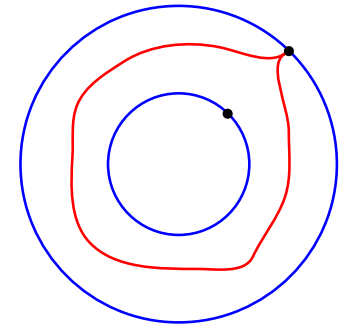}
    \end{overpic}
    \put(-31,134){$x$}
    \put(-57,103.5){$x'$}
    \put(-4,80){$\Dot{a}$}
    \put(-105.5,75.5){$\Dot{a}'$}
    \put(-80,14.5){$\Dot{b}$}
    \caption{A non-trivial arc $\Dot{b}\in\pi_1(A,x)$.}
    \label{fig: non-trivial b hat}
\end{figure}

However, $\tilde{b}$ and $\tilde{a}'$ then bound a cylinder in $A\setminus\{y\}$. That is, there is a component $A'$ of int$(A)\setminus\tilde{b}$ which is homeomorphic to the interior of an annulus $S^1\times(0,1)$. $A'$ then projects to a connected component of $\oncepuncturedtorusblack\setminus(\hat{a}\cup\hat{b})$ homeomorphic to $S^1\times(0,1)$. Since this region is contained in int$(A)$, it projects onto a region bounded by $\hat{a}$ and $\hat{b}$ which is homeomorphic to the interior of an annulus, contradicting the hypothesis. Thus, $\tilde{b}$ cannot lay within $\pi_1(A,x)$, so it is not an arc based at just one point. That is, $\tilde{b}$ must connect $x$ and $x'$. Since $\{x,x'\}$ projects onto $\bullet$, this implies that the intersection of $\hat{a}$ and $\hat{b}$ in $\twicepuncturedtorus$ is transverse and thus resembles Figure~\ref{fig: transverse} in a sufficiently small neighborhood of $\bullet$. Recall that the construction of $\Phi(a)$ does not depend on the choice of tubular neighborhood $N_{\hat{a}}$ and we may then choose the neighborhoods $N_{\hat{a}}$ and $N_{\hat{b}}$ to be as small as we would like. In fact, we may find a chart $(U,\phi_U)$ consisting of an open neighborhood $U\subset T^2$ of $\bullet$ and a homeomorphism $\phi_U:U\to \phi_U(U)\subset\mathbb{R}^2$ and then choose $N_{\hat{a}}$ and $N_{\hat{b}}$ such that $\overline{N_{\hat{a}}}\cap\overline{N_{\hat{b}}}\subset U$, since $\hat{a}$ and $\hat{b}$ only intersect at $\bullet$. Then, $\phi_U(\overline{N_{\hat{a}}}\cap\overline{N_{\hat{b}}})$ is a closed rectangle in $\mathbb{R}^2$ resembling Figure~\ref{fig: simple intersection}. The corners of the resulting rectangle each give us our desired  intersections between $\alpha,\alpha',\beta,\beta'$. Since $\overline{N_{\hat{a}}}\cap\overline{N_{\hat{b}}}$ is contained fully within $U$, there are no other intersections. With this construction, we can see that our desired intersection number is always upheld.

\begin{figure}[htbp]
    \centering
    \includegraphics[scale=0.3]{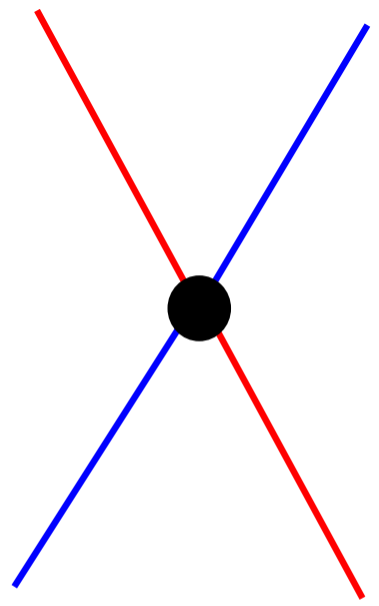}
    \caption{The local intersection of $\hat{a}$ and $\hat{b}$.}
    \label{fig: transverse}
\end{figure}

\begin{figure}[htbp]
    \centering
    \begin{overpic}
    [scale=0.7]{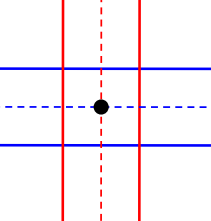}
    \end{overpic}
    \put(-123,78){$\alpha$}
    \put(-122,57.5){$\hat{a}$}
    \put(-123,38){$\alpha'$}
    \put(-81,-12.5){$\beta$}
    \put(-60,-13){$\hat{b}$}
    \put(-41,-12.5){$\beta'$}
    \caption{The set $\phi_U(\overline{N_{\hat{a}}}\cap\overline{N_{\hat{b}}})\subset\mathbb{R}^2$.}
    \label{fig: simple intersection}
\end{figure}
    
($\Longleftarrow$) Now, assume that $i(\alpha,\beta)=i(\alpha',\beta)=i(\alpha',\beta')=i(\alpha,\beta')=1$. Choose representatives $\alphahat,\alphahat',\betahat,\betahat'$ which intersect minimally. Then there exist punctured annuli $A_{\alpha}:=N_{\alpha}\setminus\{\bullet\}$ and $ A_{\beta}:=N_{\beta}\setminus\{\bullet\}$ bounded by $\alphahat,\alphahat'$ and $\betahat,\betahat'$ respectively, and such that $A_{\alpha}\cap A_{\beta}$ is a punctured square. Because $N_{\alpha},N_{\beta}$ are unpunctured annuli, within both sets there exists a unique homotopy class of non-trivial simple arcs based at $\bullet$. Let $a$ be this class in $N_{\alpha}$ and $b$ the class in $N_{\beta}$. Note that $a$ and $b$ are also non-peripheral, since $a$ is equivalent to $\alpha$ and $b$ is equivalent to $\beta$ as homotopy classes of simple closed curves in $\oncepuncturedtorus$. Further, note that since $A_{\alpha}\cap A_{\beta}$ is homeomorphic to a punctured square, we can choose representatives $\hat{a}\in a$ and $\hat{b}\in b$ which do not intersect in this set. By our construction, $\hat{a}$ and $\hat{b}$ only intersect at $\bullet$ in $\oncepuncturedtorus$. Consider now the annulus $A$ obtained by cutting along $\hat{a}$ and the homeomorphism $h$ defined as in the forward implication. By construction, $\hat{a}$ and $\hat{b}$ meet transversally as in Figure~\ref{fig: transverse}. Thus, $\hat{a}$ and $\hat{b}$ meet at $\bullet$ from different sides, implying that the endpoints of $\tilde{b}$ are distinct inside $A$. Note that cutting along a simple arc in a cylinder which connects the two boundary components results in a square. Thus, int$(A)\setminus\tilde{b}$ is homeomorphic to the interior of a square and will project onto $\oncepuncturedtorusblack\setminus(\hat{a}\cup\hat{b})$. This implies that $\twicepuncturedtorus\setminus(\hat{a}\cup\hat{b})$ is a connected space which is homeomorphic to a punctured square and not $S^1\times(0,1)$. Therefore, $\hat{a}$ and $\hat{b}$ do not bound a cylinder.
\end{proof}

\begin{lem} \label{lem: abcformtrip}
Let $a$, $b$ and $c$ be three distinct elements of $\mathcal{A}$. There exist representatives $\hat{a}$, $\hat{b}$ and $\hat{c}$ intersecting only at the puncture $\bullet$ that pairwise do not bound a cylinder inside $T^2 \setminus \{\circ\}$ if and only if the associated pants decompositions $\Phi(a) = \{ \alpha, \alpha' \}$, $\Phi(b) = \{ \beta, \beta' \}$ and $\Phi(c) = \{ \gamma, \gamma' \}$ provide a tri-pants of $\twicepuncturedtorus$.
\end{lem}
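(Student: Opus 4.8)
The plan is to reduce both directions to the two-arc case already settled in Lemma~\ref{lem: abintersectonce}, using Lemma~\ref{lem: alpha in P} to supply the ``pants'' part of the tri-pant conditions and the bijection of Lemma~\ref{lem: bijection} to keep track of the classes. Throughout, the role of this lemma is essentially to certify that the three \emph{pairwise} facts coming from Lemma~\ref{lem: abintersectonce} can be assembled into one global statement about the triple.

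\textbf{The forward implication.} Suppose $\hat a,\hat b,\hat c$ are representatives of $a,b,c$ meeting pairwise only at $\bullet$ and bounding no cylinder in $\oncepuncturedtorus$. By Lemma~\ref{lem: alpha in P}, $\Phi(a)=\{\alpha,\alpha'\}$, $\Phi(b)=\{\beta,\beta'\}$, $\Phi(c)=\{\gamma,\gamma'\}$ all lie in $\p$, which is exactly conditions (1)--(2) of Definition~\ref{def: tripantsI}. Applying Lemma~\ref{lem: abintersectonce} to each of the pairs $\{a,b\}$, $\{b,c\}$, $\{a,c\}$ — the restrictions of $\hat a,\hat b,\hat c$ witnessing its hypothesis each time — yields $i(\zeta,\xi)=1$ whenever $\zeta$ belongs to one of $\Phi(a),\Phi(b),\Phi(c)$ and $\xi$ to another. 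This is condition (3), and it also forces the six classes to be pairwise distinct (classes in distinct pants have intersection number $1\neq 0$, while classes in the same pants are distinct by definition of $\p$); hence $\{\alpha,\alpha',\beta,\beta',\gamma,\gamma'\}$ is a tri-pant.

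\textbf{The reverse implication.} Now assume $\Phi(a),\Phi(b),\Phi(c)$ assemble into a tri-pant. Condition (3) of Definition~\ref{def: tripantsI} gives $i(\zeta,\xi)=1$ for every $\zeta,\xi$ coming from distinct pants, so for each of $\{a,b\},\{b,c\},\{a,c\}$ the hypothesis of Lemma~\ref{lem: abintersectonce} holds and each pair admits representatives meeting only at $\bullet$ and bounding no cylinder; in particular $i(a,b)=i(b,c)=i(a,c)=0$ as arcs rel $\bullet$. The remaining task is to promote these three pairwise choices to a single triple $\hat a,\hat b,\hat c$ with all the required properties simultaneously. Here I would fix a complete, finite-area hyperbolic metric on $\twicepuncturedtorus$ and take $\hat a,\hat b,\hat c$ to be the geodesic representatives of $a,b,c$ (complete geodesics exiting the cusp at $\bullet$ on both ends). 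These geodesics are simple, and since the pairwise arc intersection numbers vanish they are pairwise disjoint away from $\bullet$; so each pair now meets only at $\bullet$, and an analysis of the complement exactly as in the proof of Lemma~\ref{lem: abintersectonce} (the complement of a disjoint-off-$\bullet$ pair is a punctured square, hence not a cylinder, precisely because the four cross-intersection numbers equal $1$) shows no pair bounds a cylinder. Thus $\hat a,\hat b,\hat c$ are the desired representatives.

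\textbf{The main obstacle.} Everything hinges on this last globalization step: Lemma~\ref{lem: abintersectonce} only produces representatives two at a time, and one must certify that a single triple works at once. The geodesic argument above is one route; alternatively one can realize the six curves $\alpha,\dots,\gamma'$ in simultaneous pairwise minimal position, take for each decomposition the unique essential arc in the once-punctured-annulus component containing $\bullet$ (as in the proof of Lemma~\ref{lem: bijection}), and check that the three arcs can be routed to agree only inside a common cusp neighborhood of $\bullet$, with each pairwise complement a punctured square rather than a cylinder — essentially the three-arc version of the local model used for Lemma~\ref{lem: abintersectonce}. Once the triple is in hand the conclusion is immediate, so the only genuine care is in realizing the three simple, essentially disjoint arcs all at once.
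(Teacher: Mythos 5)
Your argument is correct, and the forward implication is essentially the paper's: apply Lemma~\ref{lem: alpha in P} for membership in $\p$ and Lemma~\ref{lem: abintersectonce} to each of the three pairs to get condition (3). Where you genuinely diverge is the reverse implication. The paper never leaves the elementary cut-and-paste setting: it takes $\hat a,\hat b$ from Lemma~\ref{lem: abintersectonce}, works in a small chart around $\bullet$ where the six curves and the two arcs have the local model of Figure~\ref{fig: intersection model}, and \emph{constructs} the third representative as the diagonal of the square $N_{\hat a}\cap N_{\hat b}$ parallel to $\gamma,\gamma'$, extended inside $N_{\hat c}$ parallel to $\partial N_{\hat c}$; the uniqueness of the nontrivial arc class in the annulus $N_{\hat c}$ identifies this arc as a representative of $c$, and transversality at $\bullet$ lets the paper quote the backwards analysis of Lemma~\ref{lem: abintersectonce} for the no-cylinder condition. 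You instead solve the globalization problem by taking geodesic representatives for a complete hyperbolic metric: this is a legitimate and arguably slicker way to realize all three arcs simultaneously, but it imports machinery the paper deliberately avoids --- existence and uniqueness of geodesic representatives for arc classes exiting a cusp, their simplicity, and the fact that geodesics realize minimal position for arcs sharing a cusp (no bigons or half-bigons), none of which is developed or cited in the paper. Also note that your parenthetical ``hence not a cylinder, precisely because the four cross-intersection numbers equal $1$'' compresses a real step: one must run the dichotomy from the forward proof of Lemma~\ref{lem: abintersectonce} (either the two arcs cross at $\bullet$, giving the punctured-square complement, or they do not, in which case they cobound an unpunctured annulus and push-offs of $\hat a\cup\{\bullet\}$ and $\hat b\cup\{\bullet\}$ give disjoint representatives of curves from the two pants decompositions, contradicting intersection number $1$). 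Since you explicitly point to that proof, this is a gloss rather than a gap, but spelling it out would make the argument self-contained. A side benefit of the paper's explicit construction is that it also yields the concrete picture of the triple (Remark~\ref{rmk:triplearcs}) used later, which the geodesic argument does not directly provide.
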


\begin{proof}
($\Longrightarrow$) Suppose that there exist representatives $\hat{a}, \hat{b}$, and $\hat{c}$ intersecting only at $\bullet$ that pairwise do not bound a cylinder inside $\oncepuncturedtorus$. By Lemma~\ref{lem: alpha in P}, the three pairs $\{\alpha,\alpha'\}$, $\{\beta,\beta'\}$, and $\{\gamma,\gamma'\}$ are all homotopy classes of essential and non-separating simple closed curves which determine pants decompositions of $\twicepuncturedtorus$. By the construction of the tubular neighborhood $N_{\hat{a}}$, there exist representatives of $\alpha,\alpha'$ which do not intersect. This result holds  for $\beta,\beta'$ and $\gamma,\gamma'$ as well. Applying Lemma~\ref{lem: abintersectonce} to each of the pairs $\{\hat{a},\hat{b}\}$, $\{\hat{a},\hat{c}\}$, and $\{\hat{b},\hat{c}\}$, we see that the associated pairs in $\{\alpha,\alpha',\beta,\beta',\gamma,\gamma'\}$ intersect pairwise once unless they determine a pants decomposition. Therefore, the set is a tri-pant.

\begin{figure}[htbp]
    \centering
    \begin{overpic}
    [scale=0.8]{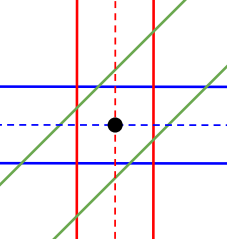}
    \put(-8,62){$\alpha$}
    \put(-8,46){$\hat{a}$}
    \put(-8,30){$\alpha'$}
    \put(29.5,-7.5){$\beta$}
    \put(46.5,-9){$\hat{b}$}
    \put(62,-7.5){$\beta'$}
    \put(78.5,102){$\gamma$}
    \put(96.5,72){$\gamma'$}
    \end{overpic}
    \vspace{0.3cm}
    \caption{The local depiction of $\phi_U(\hat{T}\cap U)$, $\phi_U(\hat{a}),$ and $\phi_U(\hat{b})$.}
    \label{fig: intersection model}
\end{figure}

($\Longleftarrow$) Assume that $T=\{\alpha,\alpha',\beta,\beta',\gamma,\gamma'\}$ is a tri-pant and let $\hat{T}=\{\alphahat, \hat{\alpha}', \betahat$, $\hat{\beta}', \gammahat,\hat{\gamma}'\}$ denote its representatives. Taking small enough tubular neighborhoods $N_{\hat{a}},N_{\hat{b}},N_{\hat{c}}$, we may find a chart $(U,\phi_U)$ containing an open neighborhood $U\subset\twicepuncturedtorus$ of $\bullet$ such that
$$\overline{N_{\hat{a}}}\cap\overline{N_{\hat{b}}},\overline{N_{\hat{a}}}\cap\overline{N_{\hat{c}}},\overline{N_{\hat{b}}}\cap\overline{N_{\hat{c}}}\subset U$$
and a homeomorphism $\phi_U:U\to\phi_U(U)\subset\mathbb{R}^2$. Since $a,b\in \A$, we can apply Lemma~\ref{lem: abintersectonce} to find representatives $\hat{a}$ and $\hat{b}$ which intersect only at $\bullet$ and do not bound a cylinder. It follows that a (local) picture of $\phi_U(\hat{T}\cap U)$, $\phi_U(\hat{a}),$ and $\phi_U(\hat{b})$ can be described as in Figure~\ref{fig: intersection model}, up to homotopy of $\hat{a}$ and $\hat{b}$. Notice that  $N_{\hat{a}}\cap N_{\hat{b}}$ forms a square with one diagonal parallel to $\gamma$ and $\gamma'$. Up to homotopy, we can choose this diagonal in such a way that it intersects $\hat{a}$ and $\hat{b}$ only at the puncture $\bullet$ within $N_{\hat{a}}\cap N_{\hat{b}}$. Because the diagonal is parallel to $\gamma,\gamma'$, we can extend it into an arc based at $\bullet$, contained in $N_{\hat{c}}$, and parallel to $\partial N_{\hat{c}}$. Since $N_{\hat{c}}$ is an annulus, this will be in the unique homotopy class of arcs in $N_{\hat{c}}$ that are non-trivial. By construction of the map $\Phi$, this class is $c$ and we will therefore denote our chosen arc as $\hat{c}$. Thus, we have found representatives $\hat{a}, \hat{b}, \hat{c}$ which intersect only at $\bullet$. By construction, notice that $\hat{a}, \hat{b}, \hat{c}$ all intersect transversely. Therefore we may appeal to the proof of the backwards direction of Lemma~\ref{lem: abintersectonce} in order to conclude that these representatives must not pairwise bound a cylinder.
\end{proof}

Lemma~\ref{lem: abcformtrip} then tells us that a triple $\{a,b,c\}\subset\A$ forms a tri-arc if and only if the pairs of curves in $\Phi(a),\Phi(b),\Phi(c)\in\p$ determine a tri-pant. Since $\Phi$ is bijective, this applies to all possible triples in $\A$ and $\p$. Therefore, we have shown an explicit bijective correspondence between tri-pants and tri-arcs. We will now describe some properties of tri-arcs in order to build topological intuition about these objects before discussing the tri-pants graph.

\begin{remark} \label{rmk:punctured_square}
If $a, b \in \A$ are as in the statement of Lemma~\ref{lem: abintersectonce}, then the complementary region of $\hat{a} \cup \hat{b}$ is a punctured square.
\end{remark}

This follows from the proof of Lemma~\ref{lem: abintersectonce}. As discussed, cutting along $\hat{a}$ gives an annulus with $\hat{b}$ corresponding to an arc in this space which connects the two boundary components. Cutting along this arc then gives a punctured square, with both pairs of opposite edges identified. One pair will project onto $\hat{a}$ and the other onto $\hat{b}$, as in Figure~\ref{fig: punctured square}. Now, we may look at two more remarks that follow from Remark~\ref{rmk:punctured_square}.

\begin{figure}[htbp]
    \centering
    \begin{overpic}
    [scale=0.5]{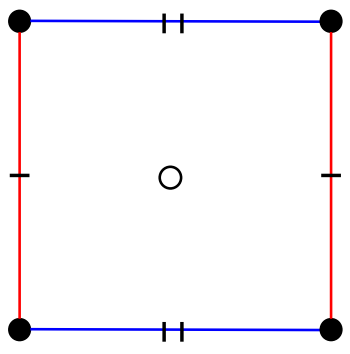}
    \end{overpic}
    \put(-68,-6.75){$\hat{a}$}
    \put(3,63.5){$\hat{b}$}
    \vspace{0.2cm}
    \caption{The complimentary region of $\hat{a}\cup\hat{b}$.}
    \label{fig: punctured square}
\end{figure}

\begin{remark}\label{rmk: fund grp generators}
We can show that, for any pair $a,b\in\A$ that pairwise intersect only at $\bullet$ and do not bound a cylinder, $\pi_1(\oncepuncturedtorus, \bullet)\cong\Z\cdot a\ast\Z\cdot b$.
\end{remark}

If we take such homotopy classes $a,b$, then Remark~\ref{rmk:punctured_square} tells us that cutting along these curves gives a punctured square. The once-punctured torus is homotopically equivalent to $\hat{a}\cup\hat{b\cup\{\bullet\}}$ and has the fundamental group $\Z\ast\Z$.

\begin{remark} \label{rmk:triplearcs}
If $a,b, c \in \A$ are as in Lemma \ref{lem: abcformtrip}, then the complementary regions of $\hat{a}\cup \hat{b}\cup \hat{c}$ are a triangle and a punctured triangle.
\end{remark}

This remark follows from Remark~\ref{rmk:punctured_square} and Lemma~\ref{lem: abcformtrip}. Cutting along representatives $\hat{a}$ and $\hat{b}$ gives us a punctured square where $\hat{c}$ will intersect $\hat{a}$ and $\hat{b}$ only at $\bullet$. That is, $\hat{c}$ does not intersect any of the edges, but is based in the four vertices. If $\hat{c}$ connected a single vertex to itself, then it would either bound a disk or a punctured disk, contradicting the fact that it is essential. If $\hat{c}$ connected two adjacent vertices, then it would be either homotopic to or bound a cylinder with $\hat{a}$ or $\hat{b}$, creating another contradiction. Therefore, $\hat{c}$ must connect two diagonal vertices, similar to Figure \ref{fig: tri-arc pic}. Thus, cutting along $\hat{c}$ gives a triangle and a punctured triangle.

\begin{figure}[htbp]
    \centering
    \begin{overpic}
    [scale=0.5]{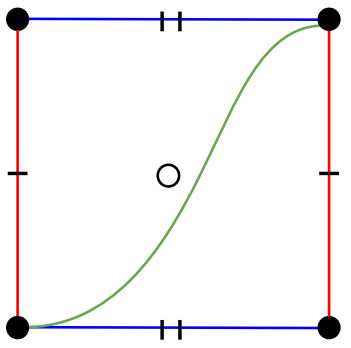}
    \end{overpic}
    \put(-70,-7){$\hat{a}$}
    \put(0,62){$\hat{b}$}
    \put(-57,54){$\hat{c}$}
    \vspace{0.2cm}
    \caption{The complimentary region of $\hat{a}\cup\hat{b}\cup\hat{c}$.}
    \label{fig: tri-arc pic}
\end{figure}

The question of how to determine whether two tri-arcs differ by an elementary move still looms large. Luckily, the tri-arc representation yields a simpler algebraic framework. By selecting an arbitrary orientation for each of the homotopy classes within the tri-arc, we can consider the arcs of a given tri-arc as elements of $\pi_1(\oncepuncturedtorus,\bullet)\cong\Z\cdot a \ast \Z\cdot b$, where $a$ and $b$ denote arbitrary generators. Note that Remark~\ref{rmk:punctured_square} holds regardless of the choice of orientation. Thus, every tri-arc therefore corresponds to a triple of ``words" written with the letters $a, a^{-1}, b,$ and $b^{-1}$. Lemma~\ref{lem: tri-arc decomp} follows as an immediate consequence of Remark~\ref{rmk:triplearcs}.
\begin{lem}\label{lem: tri-arc decomp}
Let $T_*=\{a, b, c\}$ be a tri-arc. If we consider $a, b, c$ as oriented elements of the fundamental group $\pi_1(\oncepuncturedtorus,\bullet)$, then $c = a^{\pm 1}b^{\pm 1}$ or $c = b^{\pm 1}a^{\pm 1}$. 
\end{lem}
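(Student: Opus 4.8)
The plan is to read the word representing $c$ directly off the polygonal decomposition of $\oncepuncturedtorus$ associated to the pair $\{a,b\}$. By Remark~\ref{rmk:punctured_square}, cutting $\oncepuncturedtorus$ along representatives $\hat{a}$, $\hat{b}$ that meet only at $\bullet$ and do not bound a cylinder yields a punctured square $Q$ whose four vertices all project to $\bullet$, whose two pairs of opposite edges project to $\hat{a}$ and $\hat{b}$, and whose puncture projects to $\circ$. Fixing orientations on $a$ and $b$ orients every edge of $Q$ and, via Remark~\ref{rmk: fund grp generators}, fixes an isomorphism $\pi_1(\oncepuncturedtorus,\bullet)\cong\Z\cdot a\ast\Z\cdot b$ under which traversing an $a$-edge with (resp.\ against) its orientation contributes the letter $a$ (resp.\ $a^{-1}$), and similarly for $b$.

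First I would invoke Remark~\ref{rmk:triplearcs}: since $\{a,b,c\}$ is a tri-arc, one can choose a representative $\hat{c}$ meeting $\hat{a}\cup\hat{b}$ only at $\bullet$ whose lift to $Q$ is a diagonal, i.e.\ an arc joining two non-adjacent vertices of $Q$ (the cases of joining a vertex to itself, or two adjacent vertices, being excluded there as they would make $\hat{c}$ inessential or force it to be homotopic to, or cobound a cylinder with, $\hat{a}$ or $\hat{b}$). Cutting $Q$ along this diagonal produces, again by Remark~\ref{rmk:triplearcs}, an unpunctured triangle $\Delta$ together with a punctured triangle; the two sides of $\Delta$ other than $\hat{c}$ are two consecutive edges of $\partial Q$, hence necessarily one $a$-edge and one $b$-edge.

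The main step is then to homotope $\hat{c}$, rel its endpoints, across the unpunctured disk $\Delta$ onto the union of those two boundary edges. Because $\Delta$ contains no puncture, this is a valid homotopy of arcs in $\oncepuncturedtorus$, and it exhibits $\hat{c}$ as the concatenation of an $a$-edge with a $b$-edge (in one order or the other, according to which of the two diagonals of $Q$ the arc $\hat{c}$ realizes). Reading off the corresponding element of $\pi_1(\oncepuncturedtorus,\bullet)$ gives $c=a^{\pm1}b^{\pm1}$ or $c=b^{\pm1}a^{\pm1}$, the signs recording whether each edge is traversed with or against its chosen orientation.

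The main obstacle I anticipate is bookkeeping rather than substance: one must be careful that the identification $\pi_1(\oncepuncturedtorus,\bullet)\cong\Z\cdot a\ast\Z\cdot b$ from Remark~\ref{rmk: fund grp generators} is exactly the one in which the edges of $Q$ spell the generators, and that the move pushing $\hat{c}$ across $\Delta$ is genuinely a homotopy \emph{rel endpoints}, so the resulting identity holds in the fundamental group and not merely up to conjugacy. Once these identifications are pinned down, no computation remains beyond the elementary observation that a diagonal of a square meets exactly one edge from each pair of opposite edges.
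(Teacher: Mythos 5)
Your proof is correct and takes essentially the same route as the paper, which deduces the lemma as an immediate consequence of Remark~\ref{rmk:triplearcs}: the lift of $\hat{c}$ is a diagonal of the punctured square, and pushing it across the unpunctured triangle expresses $c$ as the concatenation of one $a$-edge and one $b$-edge, i.e.\ $c=a^{\pm1}b^{\pm1}$ or $c=b^{\pm1}a^{\pm1}$. You merely make explicit the rel-endpoints homotopy and the identification with $\Z\cdot a\ast\Z\cdot b$ that the paper leaves implicit.
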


Even still, the triples describing tri-arcs can easily yield unbroachable complexity. We seek an even simpler representation of tri-pants in order to understand the tri-pants graph.
\subsection{Tri-Pants and Maximality}\label{sec: maximality}
There are many other ways to understand a tri-pant; we provide an example below with an alternate, less restrictive definition.

\begin{defn}\label{def: tripantsII}
A {\em supercollection} is a maximal collection of homotopy classes of non-separating essential simple closed curves on $\twicepuncturedtorus$ that pairwise intersect at most once. 
\end{defn}

\begin{lem}\label{lem: leq6}
If $X$ is a supercollection, then $card(X)\leq 6$.
\end{lem}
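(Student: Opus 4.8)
The plan is to pass to the once-punctured torus via $\eta_*$ and combine the bound from Lemma~\ref{lem: cardleq2} with the well-known structure of curves on the once-punctured torus. First I would recall that in $T^2\setminus\{\circ\}$, any collection of homotopy classes of essential simple closed curves that pairwise intersect at most once corresponds, after identification with the Farey graph, to a set of vertices that are pairwise adjacent or equal; since the Farey graph contains no simplices larger than triangles, such a collection has at most $3$ distinct homotopy classes. (Concretely: essential simple closed curves on the once-punctured torus are indexed by slopes in $\Q\cup\{\infty\}$, and $i(p/q,r/s)=|ps-qr|$, so $i\le 1$ forces the slopes to span a triangle of the Farey tessellation, of which there are at most three.)

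Now let $X$ be a supercollection. By Definition~\ref{def: closed curves}, every curve in $X$ is essential and non-separating, hence $\eta_*$ sends each element of $X$ to an essential (non-separating, but all non-separating curves on $T^2$ minus a point are essential) simple closed curve in $\oncepuncturedtorus$. By Lemma~\ref{lem:intleq2}, since every pair in $X$ intersects at most once in $\twicepuncturedtorus$, the images under $\eta_*$ also pairwise intersect at most once in $\oncepuncturedtorus$. Therefore $\eta_*(X)$ is a collection of homotopy classes in $\oncepuncturedtorus$ that pairwise intersect at most once, so by the previous paragraph $\eta_*(X)$ has at most $3$ distinct elements. Finally, Lemma~\ref{lem: cardleq2} bounds the size of each fiber: for every homotopy class $\delta$ in $\oncepuncturedtorus$, $|\{\beta\in X : \eta_*(\beta)=\delta\}|\le 2$. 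Combining, $card(X) = \sum_{\delta\in\eta_*(X)} |\eta_*^{-1}(\delta)\cap X| \le 3\cdot 2 = 6$.

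The one point that needs care — and which I expect to be the main (albeit modest) obstacle — is justifying cleanly that a pairwise-$\le 1$-intersecting family of essential simple closed curves on the once-punctured torus has at most three members, i.e., recalling or citing the slope/Farey description of such curves and the fact that the Farey graph is $2$-dimensional (no $K_4$). This is standard (see \cite{MC}, and is also implicit in Section~\ref{subsec:farey}), so I would simply invoke it, perhaps with the one-line computation $i(p/q,r/s)=|ps-qr|$ to make the argument self-contained. Everything else is bookkeeping: checking that $\eta_*$ is well-defined on the relevant classes and that essentiality/non-separation behave as claimed under filling in the puncture $\bullet$, both of which were already established in the lemmas above.
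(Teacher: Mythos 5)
Your argument is correct and is essentially the paper's own proof: project $X$ via $\eta_*$, observe the images pairwise intersect at most once so there are at most three distinct image classes, and bound each fiber by $2$ using Lemma~\ref{lem: cardleq2}. The only cosmetic differences are that the paper projects to the closed torus $T^2$ (matching how Lemmas~\ref{lem:intleq2} and~\ref{lem: cardleq2} are stated) and phrases the count as a contradiction with $card(X)\geq 7$ via pigeonhole, whereas you work with slopes on $\oncepuncturedtorus$ and count directly as $3\cdot 2$.
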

\begin{proof}
Recall the inclusion map $\eta: T^2\setminus\{\circ,\bullet\} \to T^2$ and its induced homomorphism $\eta_*$. By definition, we have that $\eta_*(\alpha) = [\eta \circ \alpha] \subset T^2$ for every $\alpha \subset \twicepuncturedtorus$. Now for the sake of contradiction, suppose that $card(X)\ge 7$. By Lemma~\ref{lem:intleq2}, we know that the images of the elements in $X$ under $\eta_*$ pairwise intersect at most once on $T^2$. It is well-known \cite{MC} that disjoint, essential simple closed curves on $T^2$ are homotopic. Another result in \cite{MC} proves that there can be at most three homotopy classes of simple closed curves on the torus that pairwise intersect exactly once. Thus, the homotopy classes in $X$ map into at most three homotopy classes in $T^2$ under $\eta_*$. It follows that there is at least one class of curves on $T^2$ whose pre-image under $\eta_*$ has at least three elements. However, we note that this contradicts Lemma~\ref{lem: cardleq2} and can therefore conclude that $card(X)\le 6$.
\end{proof}

\begin{lem}\label{lem: geq6}
If $X$ is a supercollection, then $card(X)\ge 6$.
\end{lem}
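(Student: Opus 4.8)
The plan is to pin down the only possible combinatorial shape of a supercollection and recognize it as a tri-pant, so that $card(X)=6\geq 6$. Using the inclusion $\eta\colon\twicepuncturedtorus\to T^2$ and its induced map $\eta_*$: since the elements of $X$ pairwise intersect at most once, Lemma~\ref{lem:intleq2} shows that the classes $\eta_*(\zeta)$, $\zeta\in X$, pairwise intersect at most once in $T^2$; as distinct disjoint essential classes on $T^2$ are homotopic, distinct such classes intersect exactly once, and hence $\eta_*(X)$ lies in a single Farey triangle $\Delta=\{\delta_1,\delta_2,\delta_3\}$. Set $X_i:=X\cap\eta_*^{-1}(\delta_i)$. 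By Lemma~\ref{lem: cardleq2}, $\lvert X_i\rvert\leq 2$; if $\lvert X_i\rvert=2$ then by Lemma~\ref{lem:intleq2} its two curves are disjoint, and being non-separating and non-homotopic they form a pants decomposition, hence an element of $\p$ (a pants decomposition of $\twicepuncturedtorus$ has exactly two curves, by Lemma~\ref{lem: curvesinpantsdecomp}). Finally, for $i\neq j$ any $\zeta\in X_i$ and $\xi\in X_j$ satisfy $i(\zeta,\xi)=1$, since it is $\leq 1$ and $\geq i(\delta_i,\delta_j)=1$. Consequently, if all three $X_i$ have two elements, then $X=X_1\cup X_2\cup X_3$ has six elements and satisfies the three conditions of Definition~\ref{def: tripantsI}, so it is a tri-pant and the claim holds.

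It remains to exclude a supercollection with a fiber of size $\leq 1$; equivalently, any collection $X$ of non-separating essential simple closed curves pairwise intersecting at most once whose fiber data over some $\Delta$ is not $(2,2,2)$ can be strictly enlarged, contradicting maximality. The engine is the tri-arc dictionary of Section~\ref{sec: tri-arc}. Given two full fibers $X_i=\Phi(b)$ and $X_j=\Phi(c)$, the cross-fiber intersection numbers are all $1$, so Lemma~\ref{lem: abintersectonce} shows $b$ and $c$ do not bound a cylinder; by the construction in Lemma~\ref{lem: abcformtrip} the pair $\{b,c\}$ completes to a tri-arc $\{a_0,b,c\}$, and choosing one of the two admissible diagonals appropriately we may arrange $\eta_*(\Phi(a_0))=\delta_k$, the remaining vertex of $\Delta$. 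Then $T:=\Phi(a_0)\cup X_i\cup X_j$ is a tri-pant, and if $X_k=\varnothing$ we are done, since $T\supsetneq X$. In configurations with fewer than two full fibers one first enlarges a singleton fiber to a pants decomposition using the same partner-finding argument described below, and one checks directly that the empty and one- or two-element configurations are non-maximal; iterating, this strictly increases $card(X)$ and hence terminates, the cardinality of any such family being $\leq 6$ by Lemma~\ref{lem: leq6}. Thus everything reduces to the single delicate case: $X_i=\{\beta,\beta'\}$, $X_j=\{\gamma,\gamma'\}$ are pants decompositions, $X_k=\{\alpha\}$, and $T=\{\alpha_0,\alpha_0',\beta,\beta',\gamma,\gamma'\}$ with $\Phi(a_0)=\{\alpha_0,\alpha_0'\}$.

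Here $\alpha_0,\alpha_0'$ are lifts of $\delta_k$ meeting each of $\beta,\beta',\gamma,\gamma'$ exactly once. If $\alpha\in\{\alpha_0,\alpha_0'\}$ then $T\supsetneq X$; otherwise we must produce a pants-decomposition partner $\alpha'$ of $\alpha$ lying in $\{\alpha_0,\alpha_0'\}$, for then $\{\alpha,\alpha',\beta,\beta',\gamma,\gamma'\}$ is a tri-pant strictly containing $X$, contradicting maximality. To find $\alpha'$ I would analyze the set $L$ of lifts of $\delta_k$ meeting both $\beta$ and $\beta'$ exactly once. Cutting $\twicepuncturedtorus$ along $\beta\cup\beta'$ produces two once-punctured annuli (Lemma~\ref{lem: pantsdecomp}); a curve in $L$ crosses each of $\beta,\beta'$ once, hence is a union of one essential crossing arc in each annulus. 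A once-punctured annulus has only two isotopy classes of essential simple arcs joining its two boundary circles, so $L$ has at most four elements; two elements of $L$ differing only in one annulus are disjoint, and the resulting disjointness graph on $L$ is a $4$-cycle whose edges are exactly the pants-decomposition pairs inside $L$. Now $\alpha,\alpha_0,\alpha_0'$ all lie in $L$ with $\{\alpha_0,\alpha_0'\}$ an edge of this cycle; hence $\alpha$, if not already an endpoint of that edge, is a vertex of the $4$-cycle adjacent to — that is, disjoint from — one of $\alpha_0,\alpha_0'$. Taking that curve as $\alpha'$ gives $\{\alpha,\alpha'\}\in\p$, and $\alpha'\in\{\alpha_0,\alpha_0'\}$ already meets $\beta,\beta',\gamma,\gamma'$ each exactly once, finishing the argument; together with Lemma~\ref{lem: leq6} this also identifies every supercollection as a tri-pant, yielding Theorem~\ref{thm: tripants and maximal collections}.

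I expect the main obstacle to be precisely this last structural input: controlling the set $L$ of lifts of a class of $\Delta$ that are compatible with a prescribed pants decomposition, and establishing the $4$-cycle disjointness pattern on it. This is a careful but elementary piece of surface topology — the classification of simple crossing arcs of a once-punctured annulus and the way one such arc in each complementary annulus glues to a simple closed curve — for which the tri-arc language of Section~\ref{sec: tri-arc} (Lemmas~\ref{lem: abintersectonce}--\ref{lem: abcformtrip} and Remark~\ref{rmk:triplearcs}) is the natural setting; once this is in hand, the reduction's case analysis and the small base cases are routine bookkeeping.
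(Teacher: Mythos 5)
Your opening reduction (pushing $X$ into a Farey triangle, $\le 2$ curves per fiber by Lemma~\ref{lem: cardleq2}, full fibers being pants pairs, cross-fiber intersections equal to $1$, and ``all fibers full $\Rightarrow$ tri-pant'') is sound and runs parallel to the framework the paper uses. The gap is exactly where you predicted trouble, and your sketched resolution does not work. The set $L$ of lifts of $\delta_k$ meeting $\beta$ and $\beta'$ exactly once is infinite, not of cardinality at most $4$: the mapping class $T_{\beta}T_{\beta'}^{-1}$ (equivalently, the point-push of $\circ$ around the core of the once-punctured annulus cobounded by $\beta,\beta'$, cf.\ Section~\ref{sec: connect}) fixes $\beta,\beta'$, acts trivially on classes in $T^2$, and preserves intersection numbers with $\beta,\beta'$, so its powers applied to any element of $L$ produce infinitely many distinct elements of $L$. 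The underlying error is that the isotopy class of a curve crossing $\beta$ and $\beta'$ once each is \emph{not} determined by the isotopy classes of its two arcs in the complementary once-punctured annuli: isotopies of those arcs slide their endpoints along the boundary circles, and regluing then leaves an unbounded twisting ambiguity about $\beta$ and $\beta'$. (Also, a once-punctured annulus has exactly \emph{one} isotopy class of essential simple arcs joining its two boundary circles, not two; the arcs ``on either side of the puncture'' become isotopic once endpoints may slide.) Hence the claimed $4$-cycle disjointness structure on $L$, and the conclusion that $\alpha$ is disjoint from one of $\alpha_0,\alpha_0'$, are unsupported. Adding the conditions $i(\cdot,\gamma)=i(\cdot,\gamma')=1$ to the definition of $L$ does yield a finite set, but identifying that set and its disjointness pattern is essentially the content of the lemma, so it cannot be invoked as a shortcut; moreover it is not clear that the partner of $\alpha$ can always be taken inside your particular completion $\{\alpha_0,\alpha_0'\}$ rather than inside the completion obtained from it by a big flip.

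In addition, the configurations you dismiss as routine bookkeeping (one full fiber plus one or two singleton fibers, or three singletons, i.e.\ some class meeting every other class exactly once with few companions) require exactly the same missing partner construction, and this is where the paper's proof does its real work. When some class $\gamma$ intersects every other class in $X$ exactly once, the paper cuts along one curve from each pants pair to get a square, checks the finitely many positions of $\hat{\gamma}$, and produces an embedded path $K$ from $\hat{\gamma}$ to a puncture disjoint from the remaining representatives; banding $\hat{\gamma}$ along the boundary of a neighborhood of $K$ then yields a partner $\gamma'$ disjoint from $\gamma$, cobounding once-punctured annuli with it, and meeting the other classes once, so $X$ is not maximal. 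The complementary case (no class meets all others exactly once, so $X$ is one or two pants pairs) is handled by completing the corresponding arcs to a tri-arc, which matches your two-full-fiber step. To repair your argument you would need to replace the $L$-analysis by such an explicit construction of a partner (or by a correct classification of the lifts of $\delta_k$ meeting all four curves of the two pants pairs exactly once).
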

\begin{proof}
For the sake of contradiction, assume that $card(X)<6$. Regarding the intersection pattern of the elements in $X$, we have two cases to consider: \begin{itemize}
    \item Suppose there is no class in $X$ that intersects every other class in $X$ exactly once, meaning that every element in $X$ is disjoint from at least one other element. Then Lemma~\ref{lem: cardleq2} allows us to conclude that either $card(X) =2$ or $card(X) =4$ and we can pair these elements up with their disjoint ``partner" so that $X = \{\alpha, \alpha'\}$ or $X = \{ \{\alpha, \alpha'\}, \{\beta,\beta'\}\}$ where $\eta_*(\alpha) = \eta_*(\alpha')$ and $\eta_*(\beta) = \eta_*(\beta')$. Thus, we see that $\{\alpha, \alpha'\}$ and $\{\beta, \beta'\}$ determine pants decompositions of $\twicepuncturedtorus$ because they separate $\twicepuncturedtorus$ into two copies of a once-punctured annulus. So, $\{\alpha, \alpha'\}, \{\beta, \beta'\} \in \mathcal{P}$. Consider the map $\Phi$, as defined in Definition \ref{lem: alpha in P}, and recall that it is bijective. Then $\Phi^{-1}(\{\alpha,\alpha'\}) = a$ and $\Phi^{-1}\{\beta,\beta'\} = b$ where $a,b \in \mathcal{A}$ and intersect only at $\bullet$. 
    
    Now, we consider the two possible cases for $X$. First, suppose that $X= \{\alpha, \alpha'\}$. Thus, we know that the elements of $X$ can be represented as $a\in \mathcal{A}$, and we can find some $b \in \mathcal{A}$ so that there exist representatives $\hat{a},\hat{b}$ which intersect only at $\bullet$. Thus, we have that $\Phi(b) = \{\beta,\beta'\} \in \mathcal{P}$, and $\beta$, $\beta'$ extend $X$. Next, suppose that $X = \{ \{\alpha, \alpha'\}, \{\beta,\beta'\}\}$. We know that the elements of $X$ can be represented as $a,b \in \mathcal{A}$ that intersect each other only at $\bullet$. Thus, we know that we can find another element, $c \in \mathcal{A}$, that extends $a,b$ to a tri-arc. So, we take $\Phi(c) = \{\gamma, \gamma'\}$ and see that $\gamma, \gamma'$ extend $X$.
    Thus, we see that in both cases, $X$ is not maximal.

    \begin{figure}[htbp]
        \centering
    \begin{overpic}
    [width=8cm]{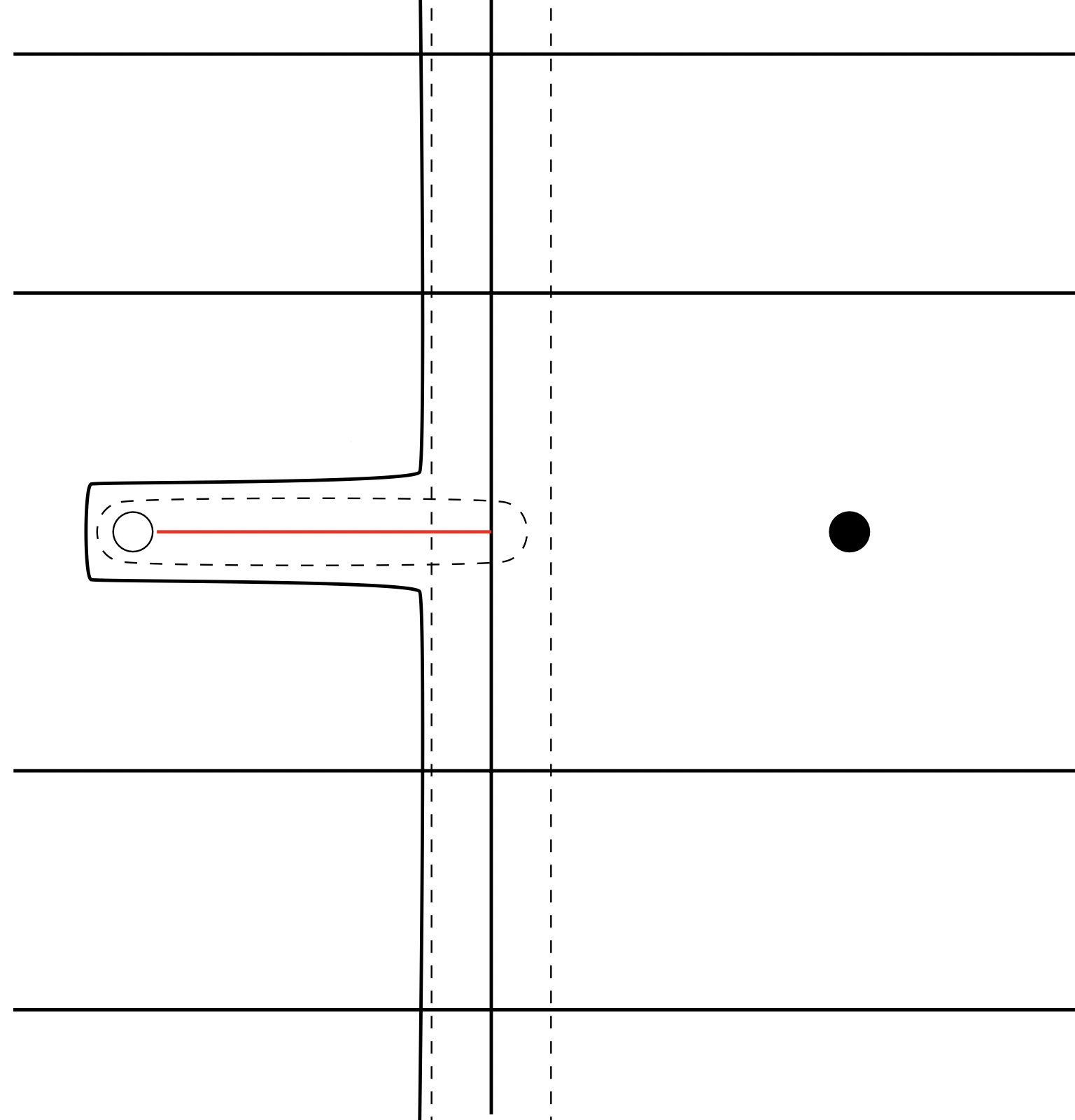}
    \put(10,90){$\alphahat$}
    \put(10,68){$\alphahat'$}
   \put(10,25){$\betahat$}
    \put(10,3){$\betahat'$}
    \put(42,103){$\gammahat$}
    \put(32,61){$\gammahat'$}

    \end{overpic}
    
        \caption{Constructing $\gamma'$ by navigating around $K$, pictured in red.}
        \label{fig: kanddelta}
    \end{figure}
    
    \item Suppose there exists a class $\gamma \in X$ that intersects every other class in $X$ exactly once. We will first focus on the case where $card(X)=5$. We know from Lemma~\ref{lem: cardleq2} that every $X$ with $card(X) =5$ will be in the form $X = \{\alpha, \alpha', \beta, \beta', \gamma\}$, where $\{\alpha,\alpha'\}, \{\beta,\beta'\}$ are disjoint pairs of homotopy classes that form pants decompositions and $\gamma$ intersects every other element in $X$ exactly once. We choose respective representatives $\alphahat, \alphahat', \betahat,\betahat',\gammahat$ to be in minimal position. We claim that there is a simple path $K:[0,1]\to\oncepuncturedtorus$ that satisfies the following conditions: that (1) $K\vert_{[0,1)}\subset \twicepuncturedtorus$; that (2) $K(0)\in \gammahat$ and $K(1) \in \{\bullet, \circ\}$; and that (3) $K$ is disjoint from $\alphahat$, $\alphahat'$, $\betahat$, and $\betahat'$. To see this, we show that such a $K$ exists by on cutting along one element in $\{\alphahat,\alphahat'\}$ and one in $\{\betahat,\betahat'\}$. Without loss of generality, we cut along $\alphahat,\betahat$ to yield a square with opposite sides identified, containing both punctures and the remaining representatives $\alphahat'$ and $\betahat'$ in its interior. Therefore, there exist 4 different cases for how $\gammahat$ could appear within this twice-punctured square up to homotopy, as shown below in Figure~\ref{fig:K}. 

\begin{figure}[htbp]
    \centering
    \begin{overpic}
    [width=8cm]{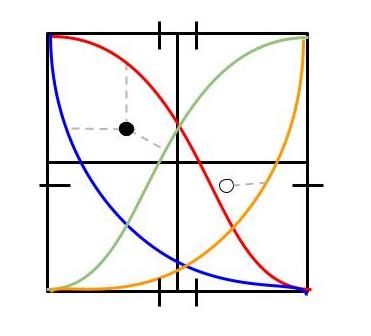}
    \put(46,87){$\alphahat$}
    \put(46,5){$\alphahat$}
    \put(63,49){$\alphahat'$}
    \put(5,46){$\betahat$}
    \put(86,46){$\betahat$}
    \put(50,70){$\betahat'$}
    \put(17,65){$\gammahat_1$}
    \put(74,65){$\gammahat_2$}
    \put(20,20){$\gammahat_3$}
    \put(72,20){$\gammahat_4$}
    \end{overpic}
    \vspace{0.3cm}
    \caption{There are four possibilities for $\gammahat$. Such a $K$ exists in each case.}
    \label{fig:K}
\end{figure}

    We can see that for every $i\in\{1,2,3,4\}$, there exists a simple path $K$ such that $K(0)\in \hat{\gamma}_i$ and $K(1) \in \{\bullet, \circ\}$. Note also that $K|_{[0,1)}$ is disjoint from $\alphahat\cup\alphahat'\cup\betahat\cup\betahat'\cup\hat{\gamma}$ and lies inside $\twicepuncturedtorus$. The possible paths are shown by the grey dotted lines in Figure~\ref{fig:K}. The same strategy can be applied to the cases in which $card(X)\le 4$ and there exists a $\gamma\in X$ such that $\gamma$ intersects every other class in $X$ exactly once. In those cases, the smaller number of curves in $X$ provides fewer obstructions to the existence of a path $K$ satisfying the necessary conditions. Therefore, we conclude that such a $K$ exists for any supercollection $X$ with $card(X) < 6$. 
    
    We now use the existence of such a $K$ to show that $X$ is not maximal. To achieve this result, construct tubular neighborhoods $V_{\hat{\gamma}}$, $V_K$ about $\hat{\gamma}$ and $K$ such that their boundaries are disjoint from every other curve. Then we can construct a homotopy class $\gamma'$ with representative $\hat{\gamma}$ by following along the boundaries of $V_K$ and $V_{\hat{\gamma}}$, as seen in Figure~\ref{fig: kanddelta}. By construction, we see that $\hat{\gamma}'$ and $\hat{\gamma}$ bound two once-punctured annuli, and $\hat{\gamma}'$ intersects every other class, $\alpha,\alpha',\beta,\beta'$ exactly once. It follows that $\gamma'$ is a class of essential, non-separating simple closed curves intersecting every other class in $X$ at most once. Thus, $X$ is not maximal. 
\end{itemize} We conclude that $card(X)\ge6$.
\end{proof}

\begin{cor}\label{cor: cardis6}
If $X$ is a supercollection, then $card(X)=6$.
\end{cor}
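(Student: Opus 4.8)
The plan is simply to combine the two lemmas that immediately precede this corollary. Lemma~\ref{lem: leq6} establishes that any supercollection $X$ satisfies $card(X) \leq 6$, while Lemma~\ref{lem: geq6} establishes that any supercollection $X$ satisfies $card(X) \geq 6$. Putting these two inequalities together forces $card(X) = 6$, which is exactly the claim. So the entire argument is a one-line squeeze, and there is no real obstacle to overcome here — all the substantive work (the intersection-number bookkeeping on $T^2$ via $\eta_*$ in Lemma~\ref{lem: leq6}, and the case analysis on whether some curve meets all others exactly once, together with the tubular-neighborhood extension argument using the path $K$, in Lemma~\ref{lem: geq6}) has already been done.

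If one wanted to say slightly more, one could note that this corollary is what licenses the reformulation of ``supercollection'' as a collection of cardinality exactly $6$, which is the bridge toward Theorem~\ref{thm: tripants and maximal collections}: once we know $card(X) = 6$, it remains (in the subsequent development) to check that the six curves of a supercollection organize into three pants decompositions with the prescribed intersection pattern, i.e.\ form a tri-pant, and conversely that a tri-pant is maximal. But for the corollary itself, the proof is exactly: ``By Lemma~\ref{lem: leq6}, $card(X) \leq 6$; by Lemma~\ref{lem: geq6}, $card(X) \geq 6$; hence $card(X) = 6$.''
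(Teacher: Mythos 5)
Your proof is correct and matches the paper's intent exactly: the corollary is an immediate consequence of combining Lemma~\ref{lem: leq6} ($card(X)\leq 6$) with Lemma~\ref{lem: geq6} ($card(X)\geq 6$), which is why the paper states it without further argument.
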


\begin{manualtheorem}{\ref{thm: tripants and maximal collections}}
A set $X$ of homotopy classes of simple closed curves on $\twicepuncturedtorus$ is a tri-pant if and only if $X$ is a supercollection.
\end{manualtheorem}

\begin{proof}
We will prove the given statement in both directions.
\begin{itemize}
\item[($\Leftarrow$)] Let $X$ be a supercollection. We will show that $X$ is a tri-pant. 
By Corollary~\ref{cor: cardis6}, we know that $card(X)=6$. So, let $X = \{\alpha_1, \alpha_2, \alpha_3, \alpha_4, \alpha_5, \alpha_6\}$ and denote the representatives of each class by $\alphahat_i$ for $i\in\{1,2,3,4,5,6\}$, chosen so that they intersect one another in minimal position. Recall that since $X$ is a supercollection, we have $i(\alpha_i, \alpha_j)\le1$ for all $i,j\in\{1,2,3,4,5,6\}$. For the sake of contradiction and without loss of generality, assume that $\alpha_6$ intersects every other representative exactly once; that is, $i(\alpha_6,\alpha_i)=1$ for all $i\neq6$. By Lemma~\ref{lem:intleq2}, we know that $i(\eta_*(\alpha_6), \eta_*(\alpha_i)) = 1$ on $T^2$ for all $i\neq6$. Recall from \cite{MC} that a maximal collection of essential simple closed curves that intersect pairwise at most once on $T^2$ has three elements. Then without loss of generality, we have $\eta_*(\alpha_1) \simeq \eta_*(\alpha_2) \simeq \eta_*(\alpha_3)$. This contradicts Lemma \ref{lem: cardleq2} because we found 3 disstinct elements of $X$ that are sent to the same homotopy class under $\eta_*$. Therefore, $\alphahat_6$ must be disjoint from at least one other $\alphahat_i$. We can extend this conclusion further, though. Since a maximal collection of disjoint essential non-separating simple closed curves in $\twicepuncturedtorus$ has cardinality equal to 2, we conclude that each $\alpha_i$ must be disjoint from exactly one other representative. Then $X$ is a set of three pairs of classes of disjoint, non-separating essential simple closed curves with representatives that pairwise intersect every other representative exactly once (using the alternate definition of a pants decomposition). Thus, $X$ is a tri-pant. 
\item[($\Rightarrow$)] Let $X$ be a tri-pant. If $X$ were not maximal, then we could find a set of homotopy classes of simple closed curves that pairwise intersect at most once with cardinality 7; Lemma~\ref{lem: leq6} rules this case out. Then $X$ is maximal and thereby is a supercollection.
\end{itemize} We have thus proven the statement in both directions.
\end{proof}
\subsection{The inclusion map and rational numbers}\label{sec: inclusion} Can we find an even more accessible representation of tri-pants that retains bijectivity? The answer to this question appears to be ``no"; however, we can describe a beautiful relationship between tri-pants and the rational numbers using the inclusion map. Let $\eta: \oncepuncturedtorus\to T^2$ be the inclusion map. We investigate its induced homomorphism $\eta_*:\pi_1(\oncepuncturedtorus, \bullet)\to\pi_1(T^2,\bullet)$, or perhaps more simply, $$\eta_*: \Z\cdot a\ast \Z\cdot b \to \Z\times\Z.$$ We note that $\eta_*$ maps ``words" written with the letters $a, a^{-1}, b$, and $b^{-1}$ into tuples of integers $(p,q)$, where $p$ and $q$ correspond to the number of times a given curve travels about the meridian and longitude of the torus, respectively. 

We see that this map is not bijecive. For example, we know that $aba^{-1}$ and $b$ are distinct elements of $\pi_1(\oncepuncturedtorus, \bullet)$, but $\eta_*(aba^{-1})=\eta_*(b)=(0,1)$. We should not let this lack of bijectivity discourage us though. If, instead of the tuple $(p,q)$, we represent elements in the image of $\eta_*$ by the rational numbers $\frac{p}{q}$, we introduce a powerful connection between tri-pants and $\Q$. In the next section, we thoroughly investigate this correspondence and extend it to relate the tri-pants graph with the dual of the Farey complex.
\section{A Tale of Two Graphs}\label{sec: graphs}
\subsection{The Farey graph and its dual} \label{subsec:farey}
As shown in the previous section, we can relate curves on  $\twicepuncturedtorus$ to those on $T^2\setminus\{\circ\}$ via the inclusion map, $\eta$. This relationship will provide implications for our main results, as set the homotopy classes of essential curves in $\oncepuncturedtorus$ is in one-to-one correspondence with the vertices of the Farey complex. This graph, denoted $\mathcal{F}$, provides a tessellation of the upper half-plane model of the hyperbolic plane, $\mathbb{H}^2$, which corresponds to the union of the set of complex numbers that have positive imaginary part with the set containing only infinity. 

\begin{defn}
The {\em Farey graph} is contained in $\overline{\mathbb{H}^2}$, with vertex set $V=\Q\cup\{\infty\}$. The points $\frac{p}{q}$ and $\frac{p'}{q'}$ are connected by an edge if and only if $|pq'-p'q|=1$. If $q=0$, then $\frac{p}{q}=\infty$.
\end{defn}

\begin{figure}[htbp]
     \centering
     \includegraphics[width=8cm]{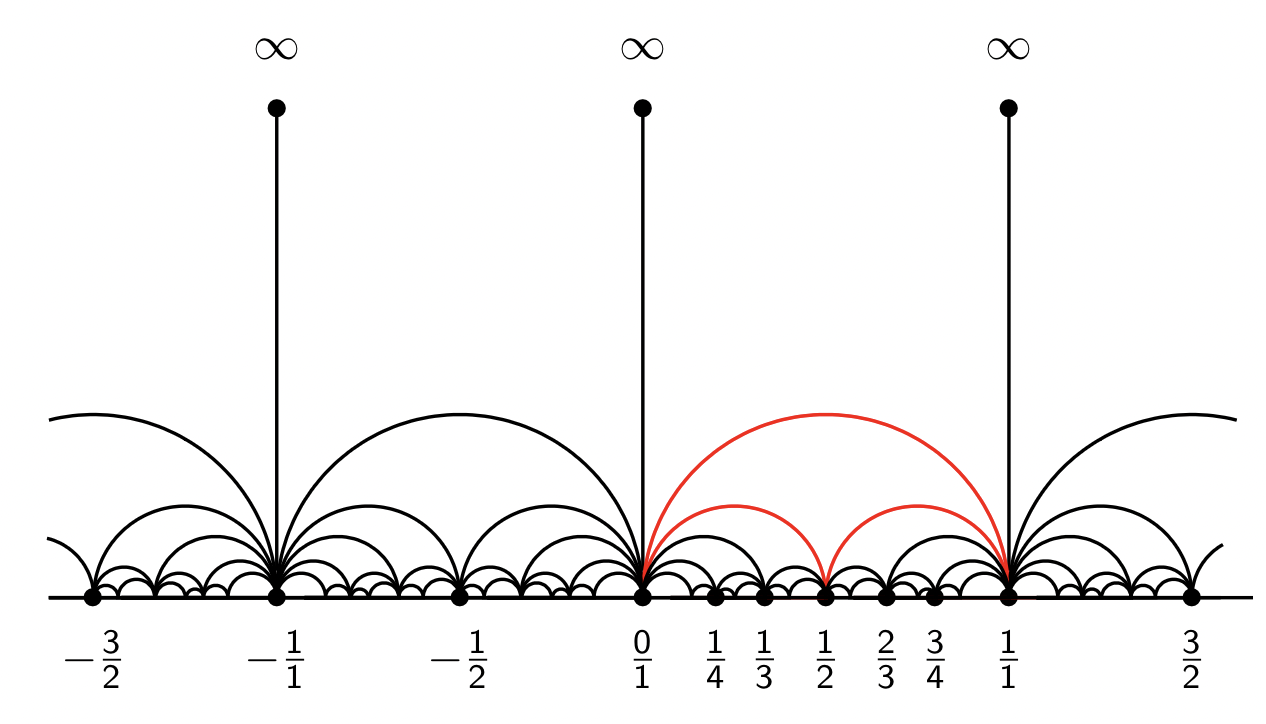}
    \caption{The Farey graph.}
    \label{fig:fareyGraph}
\end{figure}

If three vertices in $\mathcal{F}$ are pairwise connected by an edge, then those vertices create a \textit{triangle} in the Farey graph, as seen by the red edges in Figure \ref{fig:fareyGraph}. We say that two triangles are \textit{adjacent} if they share an edge. We define a metric $d_\mathcal{F}(\frac{p}{q},\frac{p'}{q'})$ to be the minimum number $n\in \Z$ of consecutive edges connecting $\frac{p}{q}$ and $\frac{p'}{q'}$. It is simple to see that the Farey Graph is connected.

As discussed in Section~\ref{sec: inclusion}, the inclusion map induces a homomorphism $\eta_*$ that allows us to associate pairs of integers with simple closed curves in $T^2\setminus\{\circ\}$. Then, two vertices in $\mathcal{F}$ are connected by an edge if the associated simple closed curves intersect exactly once. The relationships between the Farey graph and simple closed curves in $T^2\setminus\{\circ\}$ are as follows: 
\begin{itemize}
    \item Let $\alpha \in \pi_1(T^2\setminus\{\circ\}, \bullet)$ such that $\eta_*(\alpha) = (p,q)$.  Then $\alpha$ relates to the vertex $\frac{p}{q}\in\mathcal{F}$. It follows that if $\eta_*(\alpha) = (1,0)$, then $\alpha$ relates to the point at infinity in $\mathcal{F}$. Note that for all $(p,q)\in \Z\times \Z$ with $p,q$ coprime, there exists a unique $\alpha\in \pi_1(\oncepuncturedtorus, \bullet)$ such that $i_*(\alpha)=(p,q)$ and $\alpha$ is represented as a simple closed curve.  
    \item The map $\eta_*$ maps tri-arcs to triangles in $\mathcal{F}$. We see that applying $\eta_*$ to the arcs of a tri-arc yields three adjacent vertices on $\mathcal{F}$, due to the condition that the arcs intersect pairwise once. Thus, each pair of vertices in $\mathcal{F}$ is connected by an edge, so the tri-arc corresponds to a unique triangle in $\mathcal{F}$. We note that adjacent triangles in $\mathcal{F}$ correspond to tri-arcs that differ by exactly one arc. This conclusion follows from the fact that they share two vertices on $\mathcal{F}$ when applying $\eta_*$ to their arcs. 
\end{itemize}
\begin{defn}
Two tri-arcs are \emph{adjacent} if they differ by exactly one arc. 
\end{defn}
We will now extend this connection between homotopy classes of essential simple closed curves on $T^2\setminus\{\circ\}$ and $\mathcal{F}$ to a correspondence between curves on $T^2\setminus\{\circ\}$ and the dual of the Farey complex, denoted $\fareydual$. 
\begin{figure}[htbp]
    \centering
    \includegraphics[width=8cm]{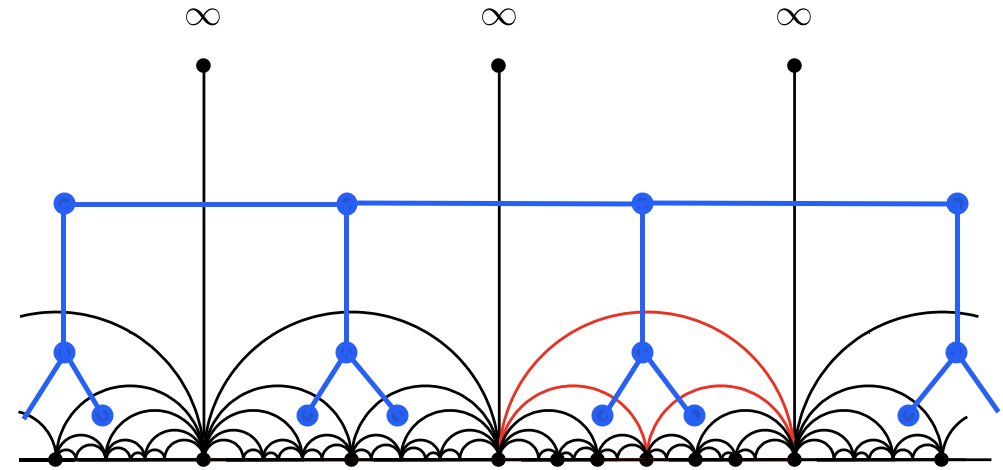}
    \caption{The dual of the Farey graph (blue) is superimposed atop $\mathcal{F}$.}
    \label{fig:FareyDual}
\end{figure}
\begin{defn}
The {\em dual of the Farey graph}, denoted $\fareydual$, is the graph associated $\mathcal{F}$ having a vertex $T^*$ per each triangle $T$ in $\mathcal{F}$ and whose edges connect two vertices $v$, $v'\in \fareydual$ if and only if their corresponding triangles $T^*$ and $(T^*)'$ are adjacent in $\mathcal{F}$.
\end{defn}
It is important to note that the dual of the Farey graph is a connected, trivalent, infinite tree and therefore has infinite diameter. For the remainder of this paper, we will examine powerful connections between $\mathcal{F}^*$ and the tri-pants graph.

\subsection{The Tri-Pants Graph}\label{sec:moves}
In this section, we will define, construct, and build intuition on the tri-pants graph. We will then state and prove the central results of this paper; namely, that the tri-pants graph is connected and has infinite diameter. First, though, one must understand the structure of the graph itself.
\subsubsection{Elementary moves}\label{subsubsec:elementary moves}
As stated in the previous section, tri-arcs that share two of their arcs that project into in adjacent triangles in $\mathcal{F}$. We now seek to understand how adjacent tri-arcs relate to one another. In fact, there are two distinct operations for moving between adjacent tri-arcs; tri-pants that correspond to adjacent tri-arcs differ by an elementary move. 

\begin{defn}[Elementary Moves]
Let $T_*=\{a,b,c\}$ be a tri-arc in $\twicepuncturedtorus$, and choose $\hat{a}, \hat{b}$, and $\hat{c}$ to be respective representatives of $a, b,$ and $c$. Without loss of generality, cut along $\hat{a}$ and $\hat{b}$ to obtain a punctured square with opposite sides identified, as pictured in Figure \ref{fig: punctured square}. Then $\hat{c}$ will lie inside this square and connect two diagonal vertices. We now define the two possible operations one may perform on $c$:
\begin{itemize}
    \item  Applying a \emph{big flip} of $T_*$ on $c$ yields a tri-arc $T'_*=\{a,b,c'\}$, where $c'\in\A$ is the unique homotopy class of arcs $\hat{c}'$ in the punctured square created by cutting along $\hat{a}$ and $\hat{b}$ such that $\hat{c}'$ and $\hat{c}$ share the same endpoints but lie on opposite sides of the puncture $\circ$. This operation is pictured in Figure~\ref{fig: big flip of c}.
    \item Applying a \emph{small flip} of $T_*$ on $c$ yields a tri-arc $T''_*=\{a,b,c''\}$, where $c''\in\A$ is a homotopy class of arcs $\hat{c}''$ in the punctured square created by cutting along $\hat{a}$ and $\hat{b}$ such that $\hat{c}''$ and $\hat{c}$ do not share endpoints. There are two distinct homotopy classes that applying a small flip of $T_*$ on $c$ may yield. This operation is pictured in Figure \ref{fig:small_flip}.
\end{itemize}
\end{defn}

\begin{figure}[htbp]
    \centering
    \includegraphics[scale=0.8]{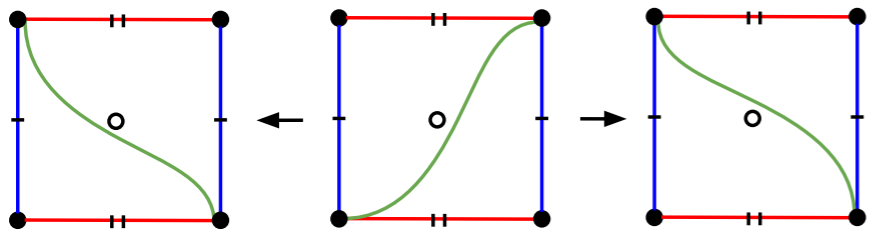}
    \caption{Small flip on a tri-arc.}
    \label{fig:small_flip}
\end{figure}

At times, it may be useful to represent tri-arcs as triples of elements in the fundamental group $\pi_1(\oncepuncturedtorus,\bullet)$. Since the homotopy classes $a,b,c$ consist of unoriented arcs, we must (arbitrarily) assign an orientation to each homotopy class to obtain elements of the fundamental group. This requirement motivates us to define a tri-arc with orientation.
\begin{figure}[htbp]
    \centering
    \begin{overpic}
    [scale=.6]{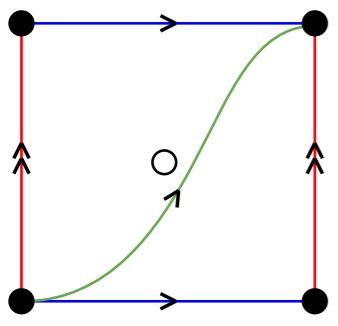}
    \end{overpic}
    \put(-54,-4){$a$}
    \put(-107,46.5){$b$}
    \put(-44,34){$c$}
    \caption{An oriented tri-arc $T_{\pi_1}=\{a,b,c\}$.}
    \label{fig: oriented triarc}
\end{figure}
\begin{defn}\label{defn: oriented triarc}
An {\em oriented tri-arc} $T_{\pi_1}=\{a,b,c\}$ is a tri-arc with choices of orientation assigned to the homotopy classes $a,b,c$ so that $a, b, c\in \pi_1(\oncepuncturedtorus,\bullet)$.

\end{defn}

The main difference between a standard tri-arc $T_*$ and an oriented tri-arc $T_{\pi_1}$ is that $T_*\subset\A$ but $T_{\pi_1}\subset\pi_1(\oncepuncturedtorus,\bullet)$. If we apply a big flip of $T_{\pi_1}=\{a,b,c\}$ on $c$, then the resulting arc $c'$ and $c$ will share the same orientation, as shown in Figure~\ref{fig: big flip of c}. We now define how a big flip operates on an oriented tri-arc through an algebraic lens.  

\begin{figure}[htbp]
    \centering
    \begin{overpic}[scale=.6]{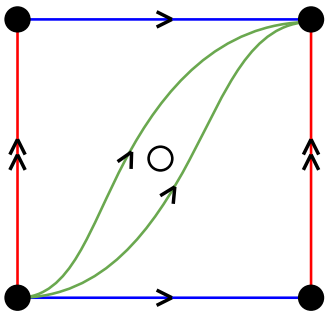}
    \end{overpic}
    \put(-53,-5){$a$}
    \put(-105,44.5){$b$}
    \put(-44,34){$c$}
    \put(-70,52){$c'$}
    \caption{Applying a big flip of $T_{\pi_1}=\{a,b,c\}$ on $c$ yields $c'$.}
    \label{fig: big flip of c}
\end{figure}

\begin{lem}\label{conj: cyclicperm}
Let $T_{\pi_1}=\{a, b, c\}$ be an oriented tri-arc. Fix $x,y\in\{a,a^{-1},b,b^{-1}\}$ in such a way that $c=xy$. Applying a big flip of $T_{\pi_1}$ on $c$ yields $c'$, and we denote this operation by $B_{c}(T_{\pi_1})$. Algebraically, we have $c':=[y,x]c$, so $B_{c}(T_{\pi_1}) =  \{a,b,c'\} = \{a,b,[y,x]c\}$.
\end{lem}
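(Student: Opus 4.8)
The plan is to work directly in the cut-open square. After cutting $\oncepuncturedtorus$ along $\hat{a}$ we obtain a cylinder, and after also cutting along $\hat{b}$ we obtain a punctured square $Q$ whose oriented edges project to $\hat a$ and $\hat b$; the side pairings give the standard presentation $\pi_1(\oncepuncturedtorus,\bullet)\cong \Z\cdot a\ast\Z\cdot b$ of Remark~\ref{rmk: fund grp generators}, where the based loop $a$ (resp.\ $b$) is read off by crossing the $\hat a$-edge (resp.\ $\hat b$-edge) positively. Writing $c=xy$ with $x,y\in\{a,a^{-1},b,b^{-1}\}$, the arc $\hat c$ connects one pair of opposite corners of $Q$; by Remark~\ref{rmk:triplearcs} the complementary regions of $\hat a\cup\hat b\cup\hat c$ are a triangle and a punctured triangle. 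First I would fix explicit coordinates: place $Q=[0,1]^2$ with the puncture $\circ$ at the center, the $\hat a$-edges as $\{0\}\times[0,1]$ and $\{1\}\times[0,1]$, the $\hat b$-edges as $[0,1]\times\{0\}$ and $[0,1]\times\{1\}$, oriented so that reading off the boundary word in the appropriate corner gives exactly $c=xy$; the four corners all map to $\bullet$, and $\hat c$ is (up to homotopy) the diagonal joining the two corners determined by the first letter $x$ and the second letter $y$.

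Next I would identify $c'$ geometrically. By definition of the big flip, $\hat c'$ has the same endpoints as $\hat c$ but lies on the opposite side of $\circ$ inside $Q$; since $\circ$ is the center, $\hat c'$ is the ``other'' diagonal-type arc, obtained from $\hat c$ by pushing it across $\circ$. Concretely, $\hat c'$ is homotopic (rel endpoints, in $\oncepuncturedtorus$, hence as a based loop in $T^2\setminus\{\circ\}$) to the concatenation that first runs a small loop encircling $\circ$ and then follows $\hat c$; the direction of that loop is dictated by which side of $\circ$ the original $\hat c$ sat on. Reading this small loop around $\circ$ as an element of $\pi_1(Q\setminus\{\circ\},\text{corner})$ and then expressing it in the generators $a,b$ via the side pairings: the boundary of $Q$, traversed from the relevant corner, spells out a conjugate of the commutator $[x,y]=xyx^{-1}y^{-1}$ (this is exactly the relator one would kill to pass from $T^2\setminus\{\circ\}$ to $T^2$). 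A careful bookkeeping of basepoint and orientation then yields $c'=[y,x]\,c$ as based loops, i.e.\ $c'=yx y^{-1}x^{-1}xy = yxy^{-1}x^{-1}c$, which is the claimed formula; I would also check $c'$ still has a simple representative and that $\{a,b,c'\}$ is again a tri-arc, which follows from the big-flip being a well-defined operation on $\A$ established when elementary moves were introduced.

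The bulk of the argument is a local picture-plus-van-Kampen computation, so the main obstacle is purely a matter of sign/convention discipline: getting the orientation of the small loop around $\circ$ correct (it depends on which of the two diagonals $\hat c$ is and on the chosen orientation of $\hat c$, which the big flip preserves by Figure~\ref{fig: big flip of c}), and getting the conjugating word to come out as $[y,x]$ rather than $[x,y]$, $[x^{-1},y^{-1}]$, or a cyclic variant. I would pin this down once and for all by computing a single explicit example — say $x=a$, $y=b$, so $c=ab$ — drawing $\hat c$ as the main diagonal of $Q$ and $\hat c'$ as the arc pushed to the other side of the center, and reading the loop $\hat c'\cdot\hat c^{-1}$ around $\circ$ directly off the square; this forces the universal formula $c'=[y,x]c$, since all other $(x,y)$ cases are obtained by relabeling/reorienting the generators and applying the same local model. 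Finally I would remark that, consistently with Lemma~\ref{lem: tri-arc decomp}, $c'=[y,x]xy$ indeed has the form of a reduced word representing an element of $\A$ extending $\{a,b\}$ to a tri-arc, completing the proof.
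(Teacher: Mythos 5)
Your proposal is correct and follows essentially the same route as the paper: reduce to the standard case $c=ab$ in the punctured square (other cases by relabeling), and read off that the loop $\hat{c}\cdot(\hat{c}')^{-1}$ encircles $\circ$ and equals the commutator $aba^{-1}b^{-1}$, giving $c'=[y,x]c$. Your van Kampen phrasing and the explicit coordinate model are just a more detailed dressing of the same picture-based computation the paper performs.
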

\begin{proof}
Recall that Lemma~\ref{lem: tri-arc decomp} allows us to always let $c=xy$ as desired. Choose respective representatives $\hat{a},\hat{b},\hat{c}$ of $a,b,$ and $c$ that intersect only at $\bullet$ and create a punctured square by cutting $\hat{a}$ and $\hat{b}$. We may assume orientations and labeling as in Figure~\ref{fig: oriented triarc}, since the punctured square is invariant under these choices and thus every other case is identical up to relabeling. Thus $c=ab$ and applying a big flip on $c$ yields the arc $c'$, as pictured in Figure~\ref{fig: big flip of c}. Notice that $c\cdot(c')^{-1} = aba^{-1}b^{-1}$, so $c' = bab^{-1}a^{-1}c = [b,a]c$. Therefore, $B_{c}(T_{\pi_1}) = \{a,b,[y,x]c\}$.
\end{proof}

By defining this algebraic relationship between tri-arcs differing only by a big flip, we are able to prove the following properties.

\begin{lem}\label{lem: twoflipID}
If $T_{\pi_1}$ contains the arc $c$, then $B_{c'}(B_c(T_{\pi_1})) = T_{\pi_1}$ with $c'$ as in Lemma~\ref{conj: cyclicperm}.
\end{lem}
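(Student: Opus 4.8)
The plan is to show that applying a big flip twice, first on $c$ and then on the resulting arc $c'$, returns the original oriented tri-arc, using the algebraic description of big flips from Lemma~\ref{conj: cyclicperm}. First I would fix $x,y \in \{a,a^{-1},b,b^{-1}\}$ with $c = xy$, as guaranteed by Lemma~\ref{lem: tri-arc decomp}, so that $B_c(T_{\pi_1}) = \{a,b,c'\}$ with $c' = [y,x]c = yxy^{-1}x^{-1}c$. The key point is to understand how $c'$ decomposes as a product of two letters from $\{a,a^{-1},b,b^{-1}\}$, so that Lemma~\ref{conj: cyclicperm} can be applied a second time. Working in the model of Figure~\ref{fig: big flip of c} where $c = ab$, one computes $c' = [b,a]c = bab^{-1}a^{-1}ab = ba$; thus $c'$ decomposes as $x'y'$ with $x' = y = b$ and $y' = x = a$, i.e. the big flip swaps the two letters in the decomposition. (Every other case is identical up to relabeling, as in the proof of Lemma~\ref{conj: cyclicperm}, since the punctured square picture is invariant under the choices.)

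With this in hand, Lemma~\ref{conj: cyclicperm} applied to $c' = yx$ gives $B_{c'}(B_c(T_{\pi_1})) = \{a, b, [x,y]c'\} = \{a, b, xyx^{-1}y^{-1}c'\}$. Substituting $c' = [y,x]c = yxy^{-1}x^{-1}c$ yields
\[
[x,y]c' = xyx^{-1}y^{-1} \cdot yxy^{-1}x^{-1} \cdot c = xyx^{-1} \cdot x^{-1} \cdot \dots
\]
— more carefully, $xyx^{-1}y^{-1}yxy^{-1}x^{-1} = xyx^{-1}\cdot xy^{-1}x^{-1} = xy\cdot y^{-1}x^{-1}=e$, so $[x,y]c' = c$. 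Hence $B_{c'}(B_c(T_{\pi_1})) = \{a,b,c\} = T_{\pi_1}$, as desired. I would present this as a short direct computation, after recording the observation that the big flip exchanges the order of the two letters.

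The main obstacle I anticipate is purely bookkeeping: one must be careful that the ``$c'$'' appearing in the statement of this lemma is precisely the arc produced by $B_c$, and that when we re-apply Lemma~\ref{conj: cyclicperm} to it, the hypothesis ``$c' = x'y'$ for some $x',y' \in \{a^{\pm1},b^{\pm1}\}$'' is genuinely met — this is exactly the content of the order-swap observation, and it is the only non-formal step. Once that is in place, the group-theoretic cancellation $[x,y][y,x] = e$ is immediate and the identity follows. Alternatively, if one prefers to avoid even this small computation, one can argue geometrically: a big flip on $c$ moves $\hat c$ to the other side of the puncture $\circ$ within the punctured square cut out by $\hat a \cup \hat b$, and there are exactly two homotopy classes of essential arcs with those endpoints in that square — the original and its image — so the inverse operation is again a big flip, namely $B_{c'}$.
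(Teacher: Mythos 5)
Your proposal is correct and follows essentially the same route as the paper: apply the algebraic big-flip formula of Lemma~\ref{conj: cyclicperm} twice (the paper does this in the normalized case $c=ab$, handling other cases by relabeling) and observe that the two commutators $[x,y]$ and $[y,x]$ cancel. Your explicit remark that the big flip swaps the two letters, so that the formula genuinely applies to $c'=yx$, is a slight sharpening of what the paper leaves implicit, but it is the same argument.
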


\begin{proof}
Let $T_{\pi_1}=\{a,b,c \}$, where $c=ab$ as stated in Lemma~\ref{conj: cyclicperm}. We can compute that $B_c(T_*) = \{a, b, [b,a]c\} = \{a,b, ba\}$. It follows from this point that $$B_{c'}(B_c(T_{\pi_1})) = \{a,b, [a,b]ba\} = \{a,b,ab\} = \{a, b, c\}.$$ Since the remaining cases for $c$ (in terms of $a,b,a^{-1},b^{-1}$) are identical up to relabeling, we conclude that applying two big flips in succession to the same arc preserves $T_{\pi_1}$.
\end{proof}

\begin{lem}
The composition of big flips along distinct arcs does not commute.
\end{lem}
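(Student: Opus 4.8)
The plan is to exhibit a single explicit oriented tri-arc and compute both orders of composition of big flips along two distinct arcs, showing the two results disagree. Since Lemma~\ref{conj: cyclicperm} gives a clean algebraic formula $B_c(\{a,b,c\}) = \{a,b,[y,x]c\}$ whenever $c = xy$ with $x,y \in \{a^{\pm1},b^{\pm1}\}$, the whole statement reduces to a computation in the free group $\pi_1(\oncepuncturedtorus,\bullet) = \Z\cdot a \ast \Z\cdot b$, and I only need to produce one counterexample. The natural candidate is the ``standard'' oriented tri-arc $T_{\pi_1} = \{a, b, c\}$ with $c = ab$, as in Figure~\ref{fig: oriented triarc}.

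First I would set up notation carefully: a big flip is only defined relative to a choice of which pair of arcs gets cut to form the punctured square, so I need to be explicit that $B_c$ flips $c$ using the square cut along the other two arcs, and likewise $B_b$ flips $b$ using the square cut along $a$ and $c$. To apply Lemma~\ref{conj: cyclicperm} to $B_b$, I first rewrite $b$ in terms of the generating pair $\{a, c\}$: from $c = ab$ we get $b = a^{-1}c$, so with $x = a^{-1}$, $y = c$ the lemma gives $B_b(\{a,c,b\}) = \{a, c, [c, a^{-1}]\,b\} = \{a, c, c a^{-1} c^{-1} a\, b\}$. (One subtlety: Lemma~\ref{conj: cyclicperm} and Lemma~\ref{lem: tri-arc decomp} are phrased for the pair $a,b$ with $c$ the ``derived'' arc; I should note that by the symmetry of the tri-arc — any two of the three arcs generate and the third is determined — the same formula applies with the roles permuted, which is exactly the ``identical up to relabeling'' remark used in the proof of Lemma~\ref{conj: cyclicperm}.)

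Then I would carry out the two compositions. For $B_b \circ B_c$: start with $\{a,b,ab\}$, apply $B_c$ to get $\{a, b, [b,a](ab)\} = \{a, b, ba\}$ (this is the computation already done in Lemma~\ref{lem: twoflipID}), and then apply $B_b$ to this tri-arc, rewriting $b$ in terms of $a$ and the current third arc $c_1 = ba$, namely $b = c_1 a^{-1}$, and reading off the new $b$-arc from the formula. For $B_c \circ B_b$: start with $\{a,b,ab\}$, apply $B_b$ as computed above to get $\{a, ca^{-1}c^{-1}ab, ab\}$ (keeping $a$ and the third arc $ab$ fixed while $b$ changes), then apply $B_c$, again re-expressing the relevant arcs in the appropriate generating pair. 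Finally I compare the two resulting triples as subsets of $\pi_1(\oncepuncturedtorus,\bullet)$ — since every element of the free group has a unique reduced word, two triples are equal iff they agree as sets of reduced words, and I would point to one arc whose reduced word differs between the two orders (e.g.\ the resulting $b$-arc, or $c$-arc, has different word length or a different first letter).

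The main obstacle I anticipate is bookkeeping rather than genuine difficulty: keeping straight, at each stage of each composition, \emph{which} two arcs form the square and \emph{which} generating pair to use when invoking Lemma~\ref{conj: cyclicperm}, since the "fixed" pair changes after the first flip. A secondary point needing care is justifying that the commutator formula of Lemma~\ref{conj: cyclicperm} transports verbatim under relabeling the three arcs of the oriented tri-arc; I would dispatch this by the same figure-invariance argument used in that lemma's proof, or simply restrict attention to flips whose square is cut along $a$ and $b$ in both orders by choosing the second pair of arcs to still contain $a$, so that a single instance of the formula suffices. Once the correct words are in hand, the inequality of the two triples is immediate from uniqueness of reduced words in a free group, so no delicate argument is needed at the end.
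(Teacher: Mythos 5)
Your proposal is correct and follows essentially the same route as the paper: both arguments take one explicit oriented tri-arc, use the commutator formula of Lemma~\ref{conj: cyclicperm} (with the roles of the arcs permuted via the relabeling symmetry) to compute the two orders of composition, and conclude from uniqueness of reduced words in the free group $\Z\cdot a \ast \Z\cdot b$ that the resulting triples differ. The only point to tighten is that, since the arcs are unoriented, the final comparison must be made up to inverting individual elements (as the paper notes with ``even under the inverses of elements''); your counterexample $\{a, a^{-1}ba, ba\}$ versus $\{a, aba^{-1}, a^{2}ba^{-1}\}$ indeed still distinguishes the two compositions under that coarser comparison.
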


\begin{proof}
Let $T_{\pi_1}=\{a, b, c\}$ be a tri-arc representation of a tri-pant. We will show that $B_{b}(B_a(T_{\pi_1})) \neq B_a(B_b(T_{\pi_1}))$. By Lemma~\ref{lem: tri-arc decomp}, we can assume without loss of generality that $a = cb$, and thus $b = c^{-1}a$. First, we see that $B_a(T_{\pi_1}) = \{[b,c]a, b, c\} = \{bcb^{-1}c^{-1}cb,b,c\} = \{bc,b,c\}$. It follows that \begin{align*}
    B_b(B_a(T_{\pi_1})) &= \{bc, [c^{-1}, bc] b, c\} \\
    &= \{bc,c^{-1}bc(cc^{-1})(b^{-1}b),c\} \\
    &= \{bc, c^{-1}bc, c\}.
\end{align*} We also have
$B_b(T_{\pi_1}) = \{a, [a,c^{-1}]b, c\} = \{a, ac^{-1}a^{-1}c(c^{-1}a), c\} = \{a, ac^{-1}, c\}$. Then \begin{align*}
    B_a(B_b(T_{\pi_1})) &= \{[c,ac^{-1}]a, ac^{-1},c\}\\ &=\{cac^{-1}(c^{-1}c)(a^{-1}a), ac^{-1}, c\}\\ 
    &=\{cac^{-1}, ac^{-1},c\}.
\end{align*}
Substituting $a = cb$ yields $B_a(B_b(T_{\pi_1}))=\{ccbc^{-1}, cbc^{-1},c\}$. Then even under the inverses of elements, we see that $B_{b}(B_a(T_{\pi_1})) \neq B_a(B_b(T_{\pi_1}))$.
\end{proof}

\subsubsection{Construction of the tri-pants graph}
Now that we understand the structure of tri-pants, tri-arcs, and elementary moves, we will construct the tri-pants graph, whose properties constitute the remainder of this paper. We denote the tri-pants graph $\tripantsgraph$. 
\begin{defn}\label{def: tripantsgraph}
    The \emph{tri-pants graph} $\tripantsgraph$ is a graph with vertices corresponding to choices of tri-pants on $\twicepuncturedtorus$. Two vertices are connected by an edge if and only if the associated tri-pants differ by an elementary move.
\end{defn}
We wish to understand the geometric properties of this graph in order to learn more about how tri-pants behave on $\twicepuncturedtorus$. Thus, we will observe the graph's structure, connectedness, and diameter. The results of Lemma~\ref{clm:nottree} and Lemma~\ref{lem: nineadjacent} follow quickly from Definition~\ref{def: tripantsgraph}. 

\begin{lem}\label{clm:nottree}
The tri-pants graph is not a tree.
\end{lem}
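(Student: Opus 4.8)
The plan is to exhibit an explicit non-trivial simple closed loop in $\tripantsgraph$, i.e. a finite sequence of tri-pants, each differing from the next by an elementary move, that returns to its starting vertex without backtracking. The natural place to look is inside a single fiber of the projection $\pi : \tripantsgraph \to \fareydual$, where elementary moves correspond to big flips between tri-arcs, since there the algebra of Lemma~\ref{conj: cyclicperm} gives us tight control. Working with an oriented tri-arc $T_{\pi_1} = \{a,b,c\}$ with $c = ab$, a big flip on $c$ replaces $c$ by $[b,a]c = ba$, a big flip on $b$ replaces it by a conjugate, and so on. The idea is to compose a short cycle of big flips along varying arcs and check, using the non-commutation computation already carried out in the lemma preceding Lemma~\ref{lem: twoflipID}, that one can close up a loop of length $\geq 3$ whose intermediate vertices are pairwise distinct.

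Concretely, first I would fix a base tri-arc $T_{\pi_1} = \{a,b,c\}$ with $c = ab$ and consider the three big flips $B_a$, $B_b$, $B_c$. By Lemma~\ref{lem: twoflipID}, $B_x$ is an involution on the set of tri-arcs containing the arc being replaced, so each $B_x$ gives an edge of $\tripantsgraph$ (via the bijection of Lemma~\ref{lem: bijection} and Claim~\ref{clm: triarcs, tripants corresp}, and the identification of big flips with elementary moves from Section~\ref{subsubsec:elementary moves}). Then I would compute the composite $B_c \circ B_b \circ B_a$ (or a similarly short word) applied to $T_{\pi_1}$ and verify two things: (i) the word is \emph{not} the identity seen as a sequence — i.e. the successive tri-pants are genuinely distinct, so the walk is embedded — and (ii) nonetheless the starting and ending tri-arcs coincide as \emph{unoriented} tri-arcs (recall a tri-arc is a set of isotopy classes in $\A$, so orientation choices and the ordering of the word do not matter for equality in $\tripantsgraph$). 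Step (ii) is where a well-chosen relation among big flips closes the loop; if a length-3 word does not close up, a length-4 or length-5 word built from $\{B_a, B_b, B_c\}$ will, and the computations are of the same elementary commutator-manipulation type as those already displayed.

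The main obstacle is ensuring the loop is \emph{simple}: it is easy to produce closed walks (e.g. $B_c \circ B_c$), but one must rule out that the closed walk we exhibit is a backtrack or repeats a vertex. This amounts to checking that all the intermediate tri-arcs in the chosen word are pairwise distinct as subsets of $\A$, which in turn reduces to showing certain words in $\pi_1(\oncepuncturedtorus,\bullet) = \Z\cdot a \ast \Z\cdot b$ are not conjugate/equal up to inversion — a free-group computation that is routine but must be done carefully for each pair. A clean alternative, which I would fall back on if the bookkeeping gets heavy, is to count: by Lemma~\ref{lem: nineadjacent} every vertex has degree $9$, so a neighbourhood of a vertex contains $1 + 9 = 10$ vertices and $9$ edges; if $\tripantsgraph$ were a tree any finite subgraph would satisfy $\#\text{edges} = \#\text{vertices} - 1$, whereas exhibiting even one additional edge among the $9$ neighbours of a fixed vertex — i.e. two distinct tri-pants $T', T''$ both adjacent to $T$ that are themselves adjacent — produces a triangle and hence a cycle. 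So the real content is: find $T$ and two of its neighbours that are adjacent to each other, which again comes down to a short explicit big-flip computation. Either route yields the result: $\tripantsgraph$ contains a simple closed loop and is therefore not a tree.
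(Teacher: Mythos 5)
Your primary route --- closing up a short word in the big flips $B_a,B_b,B_c$ so that the whole loop stays inside one fiber of $\pi$ --- has a genuine gap, and most likely cannot work at all. You assert that ``if a length-3 word does not close up, a length-4 or length-5 word built from $\{B_a,B_b,B_c\}$ will,'' but nothing in Lemma~\ref{conj: cyclicperm}, Lemma~\ref{lem: twoflipID} or the non-commutation computation supplies such a relation: the only relation exhibited there is $B_{c'}\circ B_c=\mathrm{id}$, which is exactly the backtracking you must avoid. Worse, the paper conjectures (see the Conclusion) that each fiber of $\pi$ is isomorphic to $\fareydual$, an infinite trivalent tree; if that is so, \emph{no} simple loop made only of big flips exists, so a cycle in $\tripantsgraph$ must use small flips and leave the fiber. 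Your plan as stated looks for the cycle in precisely the place where it is not expected to be.

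Your fallback, however, is essentially the paper's proof, and it is where the real argument lives: the paper simply exhibits a triangle (Figure~\ref{fig:loopongraph}), obtained by applying to $T$ a small flip, then a big flip, then a different small flip back to $T$. Concretely, the two small flips of $T=\{a,b,c\}$ along the same arc $c$ replace $c$ by the two homotopy classes of the opposite diagonal of the punctured square, one on each side of the puncture $\circ$; these two results $T_8,T_9$ are distinct, both adjacent to $T$, and differ from each other by a big flip (this is already observed in the discussion following Definition~\ref{defn: pitilde}). So the ``two neighbours of $T$ that are themselves adjacent'' that you hope to find are the two \emph{small}-flip neighbours at the same arc, not two big-flip neighbours, and the verification is the short computation you defer rather than carry out. (Also, a small correction to your counting remark: a finite subgraph of a tree satisfies $\#\text{edges}\le\#\text{vertices}-1$, not equality; equality holds for finite subtrees. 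The intended conclusion --- one extra edge among the neighbours of a vertex forces a cycle --- is of course fine.) With the triangle $T\to T_8\to T_9\to T$ made explicit, the lemma follows; without it, the proposal has identified the right target but not proved the statement.
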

\begin{proof}
See Figure~\ref{fig:loopongraph} for an example of a closed loop in $\tripantsgraph$.
\end{proof}
\begin{figure}[htbp]
    \centering
    \includegraphics{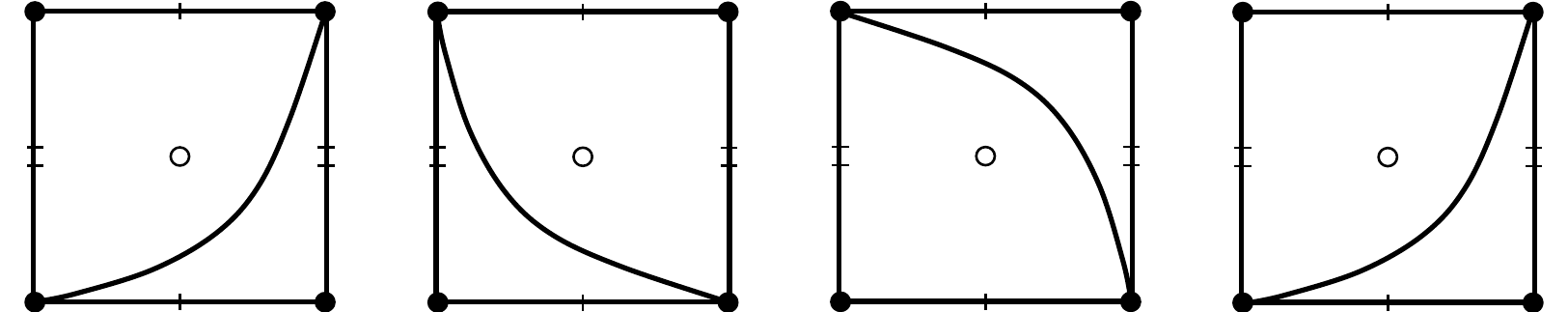}
    \caption{A closed loop in $\tripantsgraph$ created by applying a small flip, a big flip, and a different small flip to the diagonal in the left image.}
    \label{fig:loopongraph}
\end{figure}
\begin{lem}\label{lem: nineadjacent}
Let $T$ be a tri-pant, so $T$ is a vertex of $\tripantsgraph$. Then $\deg(T)=9$.
\end{lem}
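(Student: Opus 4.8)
The plan is to count the neighbors of a fixed tri-pant $T$ in $\tripantsgraph$ by working on the tri-arc side, where the combinatorics is transparent. By Lemma~\ref{lem: bijection} and Claim~\ref{clm: triarcs, tripants corresp}, $T$ corresponds to a unique tri-arc $T_*=\{a,b,c\}$, and two tri-pants differ by an elementary move exactly when their tri-arcs share exactly two of their three arcs. So a neighbor of $T$ is obtained by fixing two of $a,b,c$ — say we keep $a$ and $b$ — and replacing the third arc $c$ by some other arc $c^\dagger$ so that $\{a,b,c^\dagger\}$ is again a tri-arc. Thus I would first argue that the neighbors of $T$ split into three disjoint groups according to which pair of arcs is preserved (these groups are disjoint because two distinct neighbors obtained by preserving different pairs would have to agree with $T$ in two arcs in two different ways, forcing them to equal $T$), and then show each group has exactly $3$ elements, giving $\deg(T)=3\cdot 3=9$.

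The core computation is therefore: with $a,b$ fixed, how many arcs $c^\dagger\neq c$ make $\{a,b,c^\dagger\}$ a tri-arc? Here I would cut along representatives $\hat a,\hat b$ to obtain the punctured square of Remark~\ref{rmk:punctured_square}, with the four corners all mapping to $\bullet$ and the puncture $\circ$ in the interior. By Remark~\ref{rmk:triplearcs}, the third arc of any tri-arc containing $a,b$ must be an essential simple arc in this punctured square joining the two diagonal corners (a corner-to-itself arc bounds a disk or punctured disk, and an arc between adjacent corners is homotopic to or cobounds a cylinder with $\hat a$ or $\hat b$). There are two choices of diagonal, and for each diagonal the arc is determined by which side of $\circ$ it passes — but an arc can wind around the puncture, so I must be careful. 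The clean way is to invoke the structure already set up: in the language of Definition~\ref{defn: oriented triarc} and Lemma~\ref{lem: tri-arc decomp}, once $a,b$ are chosen (with orientations), the admissible third arcs, read as elements of $\pi_1(\oncepuncturedtorus,\bullet)\cong \Z\cdot a * \Z\cdot b$, are exactly the eight products $x y$ with $x\in\{a^{\pm1}\}$, $y\in\{b^{\pm1}\}$ together with $y x$ — but as \emph{unoriented} arcs these collapse in pairs, leaving the four words $ab, ab^{-1}, a^{-1}b, ba$ (equivalently $a^{-1}b^{-1}$, etc., identified under orientation reversal), one of which is $c$ itself; hence $3$ choices for $c^\dagger$. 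I would double check this count against the pictures in Figure~\ref{fig: big flip of c} and Figure~\ref{fig:small_flip}: the big flip produces one of these three, and the small flip produces the other two, matching the statement already made in the text that a small flip yields two distinct homotopy classes while a big flip yields one.

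Assembling the pieces: each of the three choices of which pair of arcs from $\{a,b,c\}$ to preserve contributes exactly $3$ new tri-arcs, these $9$ tri-arcs are pairwise distinct (if one arose from preserving two different pairs it would share two arcs with $T$ in two incompatible ways and hence coincide with $T$, which it does not since a tri-pant does not differ from itself by an elementary move), and by the bijection with tri-pants these give exactly $9$ neighbors of $T$ in $\tripantsgraph$. Therefore $\deg(T)=9$.

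The main obstacle I anticipate is making rigorous the claim that there are precisely three replacement arcs for a fixed pair — in particular ruling out arcs that wind repeatedly around the puncture $\circ$ and arguing that the two diagonals genuinely give inequivalent tri-arcs. The honest fix is to lean on Lemma~\ref{lem: tri-arc decomp}: any arc $c^\dagger$ completing $\{a,b,\cdot\}$ to a tri-arc satisfies $c^\dagger = a^{\pm1}b^{\pm1}$ or $b^{\pm1}a^{\pm1}$ in $\pi_1(\oncepuncturedtorus,\bullet)$ after orienting, which is a finite list, and then one checks which of these actually yield tri-arcs (all of them, by the non-cylinder condition) and which coincide as unoriented classes. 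This reduces a potentially delicate topological enumeration to a short bookkeeping argument in the free group, which is the step I would write out carefully.
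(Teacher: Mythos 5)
Your proposal is correct and follows essentially the same route as the paper: fix each of the three pairs of arcs of the associated tri-arc, enumerate the three possible replacement arcs (one big flip, two small flips) as words in $\pi_1(\oncepuncturedtorus,\bullet)$ via Lemma~\ref{lem: tri-arc decomp}, and check the nine resulting tri-arcs are distinct from each other and from $T$. If anything, your enumeration of the unoriented third arcs ($ab$, $ba$, $ab^{-1}$, $a^{-1}b$ up to inversion) is slightly more careful than the paper's list, which writes the two small flips as $ab^{-1}$ and $ba^{-1}$ even though these coincide as unoriented classes.
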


\begin{proof}
Let $T$ correspond to $T_{*}=\{a, b, c\}$ with respective representatives $\hat{a}, \hat{b}$, and $\hat{c}$. There are three possible choices one can make to represent $T_{*}$ on a punctured square with arbitrarily-chosen orientation. \begin{enumerate}
    \item Cut along $\hat{a}$ and $\hat{b}$ to yield the construction shown in Figure~\ref{fig: oriented triarc}. Since $c = ab$, we write that $T_{\pi_1}=\{a,b,ab\}$. There are three possible elementary moves to apply on $ab$: two small flips and a big flip. As elements in $\pi_1(\oncepuncturedtorus,\bullet)$, these operations yield the triples $\{a,b,ba\}$, $\{a,b,ab^{-1}\}$, and $\{a,b,ba^{-1}\}$.
    \item Cut along $\hat{a}$ and $\hat{c}$. Applying the three possible elementary moves on $b$ yields the triples $\{a,aba, ab\}$, $\{a, a^2b, ab\}$, and $\{a,ab^{-1}a^{-1}, ab\}$, since $c = ab$. 
    \item Cut along $\hat{b}$ and $\hat{c}$. Applying the three possible elementary moves on $a$ yields the triples $\{bab,b,ab\}$, $\{ab^2,b,ab\}$, and $\{b^{-1}ab,b,ab\}$, since $c = ab$. 
\end{enumerate}
Even under the inverses of elements, these nine resulting triples are distinct from one another, allowing us to conclude that $\deg(T)=9$.
\end{proof}

\subsubsection{The projection map $\pi$}\label{sec: pi}
We now use $\fareydual$ to produce more substantial results about the geometric structure of $\tripantsgraph$. We must first define a relation between the two graphs. 

\begin{defn}\label{defn: pitilde}
Define the projection map $\pi: \tripantsgraph \to \fareydual$ such that: 
\begin{itemize}
    \item Every $T\in \tripantsgraph$ maps to a vertex $v\in \fareydual$ corresponding to the triangle with vertices obtained by applying $i_*$ on the arcs of $T_*$, the tri-arc representing $T$. Observe in particular that (1) if $T$ and $T'$ differ by a big flip, then $\pi(T) = \pi(T')$, and that (2) if $T$ and $T'$ differ by a small flip, then $\pi(T)$ and $\pi(T')$ are adjacent in $\fareydual$.
    \item Let $e \in \tripantsgraph$ be an edge connecting two vertices $T, T'$ that differ by a \emph{big flip}. Then $\pi(e)=\pi(T) = \pi(T')$.
    \item Let $e \in \tripantsgraph$ be an edge connecting two vertices $T, T'$ that differ by a \emph{small flip}. Then $\pi(e)$ connects $\pi(T)$ to $\pi(T')$. 
\end{itemize}
\end{defn}

\begin{figure}[htbp]
    \centering
    \includegraphics[width = 14cm]{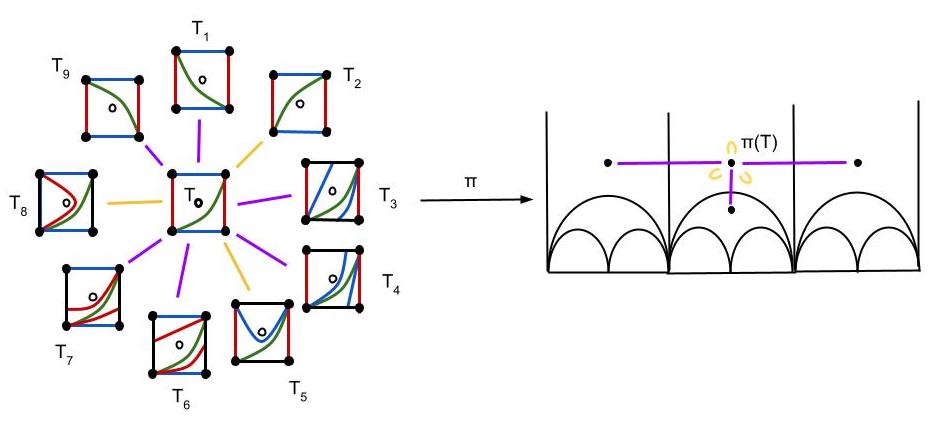}
    \caption{The map $\pi$ maps vertices in $\tripantsgraph$ to vertices in $\fareydual$.}
    \label{fig:pimap}
\end{figure}

Figure \ref{fig:pimap} provides intuition as to how the tri-pants graph is projected onto $\fareydual$. In this image, we represent edges in $\tripantsgraph$ corresponding to small flips as purple and edges in $\tripantsgraph$ corresponding to big flips as yellow. As proven in Lemma~\ref{lem: nineadjacent}, we know that vertices in $\tripantsgraph$ have valence equal to 9. However, we can see that their images under $\pi$ only have valence 3. Thus, we desire to understand what lies in the preimage of each element of $\fareydual$ under this map. 

\begin{defn}
Define a \emph{fiber} of $\pi$ to be $\pi^{-1}(v)\subset \tripantsgraph$ for some $v\in \fareydual$. 
\end{defn}

By Definition~\ref{defn: pitilde}, if we have $T\in\pi^{-1}(v)$ for some $v\in\fareydual$, then every other tri-pant $T'$ which differs from $T$ by a finite sequence of big flips will also be in the same fiber $\pi^{-1}(v)$. In Figure~\ref{fig:pimap}, we can see that $T$, $T_2$, $T_5$, $T_7$ and the yellow edges that connect them all exist within the same fiber. We must also note that for every edge $e \in \fareydual$, we see that $card(\pi^{-1}(e)) \geq 2$. To convince ourselves of this fact, look again to Figure~\ref{fig:pimap}. As shown, $T_8$ and $T_9$ differ from $T$ by a small flip and from each other by a big flip. Then $\pi(T_8) = \pi(T_9)$ and the two tri-pants exist in the fiber adjacent to that of $T$. Thus, we see that the two purple edges joining $T_8$ and $T_9$ to $T$ in $\tripantsgraph$ have the same image under $\pi$. Lemma~\ref{lem: pisurj} follows naturally from this observation.

\begin{lem}\label{lem: pisurj}
The map $\pi$ is surjective.
\end{lem}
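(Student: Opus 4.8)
The plan is to exploit the connectedness of $\fareydual$: it suffices to show that the vertex set $\pi(\tripantsgraph)\subset\fareydual$ is non-empty and closed under passing to adjacent vertices, since a non-empty subset of the vertices of a connected graph with this property is the whole vertex set. Non-emptiness is immediate because tri-pants exist (for instance the one coming from the tri-arc $\{a,b,ab\}$ depicted in Figure~\ref{fig: oriented triarc}). So the real content is the adjacency-closure step, and after that a short remark will upgrade vertex-surjectivity to surjectivity as a map of graphs.

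For the adjacency step I would fix $v\in\pi(\tripantsgraph)$, write $v=\pi(T)$ for a tri-pant $T$ with tri-arc $T_*=\{a,b,c\}$, so that the three vertices of the triangle $v\subset\mathcal{F}$ are the slopes $\eta_*(a),\eta_*(b),\eta_*(c)$. The three vertices of $\fareydual$ adjacent to $v$ correspond to the three triangles of $\mathcal{F}$ obtained by flipping $v$ across one of its edges; the triangle sharing the edge $\{\eta_*(a),\eta_*(b)\}$ with $v$ has third vertex the other Farey mediant of $\eta_*(a)$ and $\eta_*(b)$, which is $\eta_*(a)-\eta_*(b)$ once we orient the primitive integer vectors so that $\eta_*(c)=\eta_*(a)+\eta_*(b)$. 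I would then invoke the local description of elementary moves from the proof of Lemma~\ref{lem: nineadjacent}: cutting along representatives of $a$ and $b$ presents $c$ as the diagonal $ab$ of a punctured square, and a small flip of $T_*$ on $c$ produces a tri-arc whose third arc is $ab^{-1}$ or $ba^{-1}$; since $\eta_*(ab^{-1})=\eta_*(a)-\eta_*(b)$ and $\eta_*(ba^{-1})=\eta_*(b)-\eta_*(a)$ represent the same slope, in either case the resulting tri-pant $T''$ has $\pi(T'')$ equal to the triangle of $\mathcal{F}$ adjacent to $v$ across the edge $\{\eta_*(a),\eta_*(b)\}$. Repeating this with the three presentations of $T_*$ on a punctured square (cutting along $\{a,b\}$, $\{a,c\}$, $\{b,c\}$, exactly as in Lemma~\ref{lem: nineadjacent}) reaches all three neighbors of $v$, giving the adjacency-closure and hence surjectivity of $\pi$ on vertices.

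Finally, to conclude surjectivity of $\pi$ as a map of graphs, I would observe that every edge $e$ of $\fareydual$ has both endpoints $v,v'$ in the image, and the small flip constructed above joining a chosen $T\in\pi^{-1}(v)$ to $T''\in\pi^{-1}(v')$ is an edge of $\tripantsgraph$ whose image under $\pi$ is precisely $e$ (as recorded in Definition~\ref{defn: pitilde}); thus $\pi^{-1}(e)\neq\emptyset$ for every edge $e$, and $\pi$ is surjective.

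The main obstacle is the bookkeeping in the adjacency step: one must verify that a small flip on the arc of $T_*$ ``opposite'' to the edge $\{\eta_*(a),\eta_*(b)\}$ changes that arc's slope to exactly the other Farey mediant, and in particular that it genuinely moves to a \emph{distinct} adjacent triangle rather than returning to $v$ (i.e.\ $\eta_*(a)-\eta_*(b)\neq\eta_*(a)+\eta_*(b)$ as slopes, which holds since $\eta_*(b)\neq 0$). This is not deep — it is essentially the computation already done in Lemma~\ref{lem: nineadjacent} together with the fact that $\eta_*$ factors through abelianization (so big flips preserve slope while a small flip on $c=ab$ lands on the slope of $\eta_*(a)-\eta_*(b)$) — but it is the place where care is needed, and it should be written out rather than left to the figure.
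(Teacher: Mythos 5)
Your proposal is correct and follows essentially the same route the paper has in mind: the paper states Lemma~\ref{lem: pisurj} as following ``naturally'' from the observation that small flips carry a tri-pant to one projecting to an adjacent vertex of $\fareydual$ (and that edges of $\fareydual$ are realized by small-flip edges), which combined with connectedness of $\fareydual$ is exactly your nonempty-and-adjacency-closed argument. Your write-up simply supplies the slope bookkeeping (the mediant computation via Lemma~\ref{lem: nineadjacent} and $\eta_*$) that the paper leaves implicit, which is a welcome level of detail but not a different method.
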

\subsubsection{Connectedness}\label{sec: connect}

With our goal being to understand how two tri-pants can relate, it is important to see if we can use elementary moves to change any tri-pant into another. This problem translates into proving one of our main results, that the tri-pants graph $\tripantsgraph$ is connected. To tackle this proof, we will focus on the study of big and small flips. Because big flips connect two tri-arcs within the same fiber and small flips connect two tri-arcs within different fibers, it is sufficient to prove the following statements:
\begin{enumerate}
    \item Proving that any two tri-arcs in a fiber can be related by a sequence of big flips.
    \item Proving that any two fibers are connected via a sequence of small flips (and possibly big flips within intermediate fibers).
\end{enumerate}
It turns out that proving $(2)$ is not such a difficult task, given the relationship that the map $\pi$ gives between the set of fibers and the vertices of $\fareydual$. However, proving $(1)$ is more difficult and requires us to define homeomorphisms of $\twicepuncturedtorus$ that perform big flips on tri-arcs. To locate these homeomorphisms we look within the surface's mapping class group.

\begin{defn}\label{defn: pmod}
Consider a surface with punctures $S=(\overline{S},\mathcal{P})$ and let Homeo$^+(S,\partial S\cup \mathcal{P})$ be the set of orientation-preserving homeomorphisms $f:\overline{S}\to\overline{S}$ such that $f|_{\partial S\cup\mathcal{P}}$ is the identity map. Additionally, let Homeo$_0(S,\partial S\cup \mathcal{P})$ be the set of maps $f\in$ Homeo$^+(S,\partial S\cup \mathcal{P})$ for which there exists a homotopy $F:[0,1]\times \overline{S}\to\overline{S}$ from $f$ to the identity maps and so that $F(t,\cdot) \in$ Homeo$^+(S,\partial S\cup \mathcal{P})$ for all $t\in[0,1]$ (that is, $f$ is isotopic to the identity in Homeo$^+(S,\partial S\cup \mathcal{P})$). We define the \emph{pure mapping class group}
$$\text{PMod}(S):=\faktor{\text{Homeo}^+(S,\partial S\cup \mathcal{P})}{\text{Homeo}_0(S,\partial S\cup \mathcal{P})}.$$
\end{defn}

From the definition, we see that the elements of PMod$(S)$ are isotopy classes of orientation-preserving homeomorphisms on $S$ which fix the boundary points and punctures. Within PMod$(\twicepuncturedtorus)$, we wish to find the isotopy classes that rearrange tri-arcs via big flips. Luckily, there exist homeomorphisms that act nicely for our study.

\begin{defn}\label{defn: push}
For any homotopy class $\theta\in\pi_1(\oncepuncturedtorusblack,\circ)$, define an isotopy class Push$(\theta)\in\pmodtorus$ of maps $P_a\in\homeoplus$ for $a\in\theta$. To apply a map $P_a$, we slide $\circ$ along $a$, pushing all arcs which intersect $a$ along as in Figure~\ref{fig: push}.

\begin{figure}[htbp]
    \centering
    \begin{overpic}{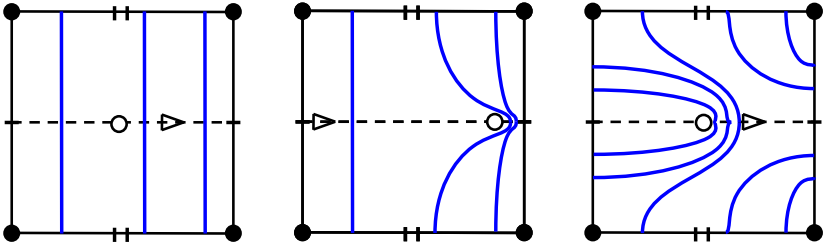}
    \end{overpic}
    \put(-332,69){$a$}
    \caption{The map $P_a$, a ``push" along $a\in\theta$.}
    \label{fig: push}
\end{figure}

\end{defn}

We ensure that Push$(\theta)$ is well-defined for each $\theta\in\pi_1(\oncepuncturedtorusblack,\circ)$ by considering two maps $P_a$ and $P_{a'}$ for $a,a'\in\theta$ and extending the homotopy between $a$ and $a'$ to an isotopy of $\twicepuncturedtorus$. Although the exact way to do this is not obvious, the procedure is fully described in Sections 4.2.1 - 4.2.3 of \cite{MC}. From here on, we discuss the isotopy classes, Push$(\theta)$, in terms of their behavior on tri-arcs, since each $P_a \in$ Push$(\theta)$ has the same behavior on intersecting arcs.

\begin{remark}\label{rmk: push morphism}
The map Push$: \pi_1(\oncepuncturedtorusblack,\circ)\to \pmodtorus$ is a group homomorphism by construction. Thus, for any $\theta_1,\theta_2 \in\pi_1(\oncepuncturedtorusblack,\circ)$, if $\theta=\theta_1\theta_2$, then Push$(\theta)$ pushes along $\theta_1$ then $\theta_2$, which is the same as applying Push$(\theta_2)\circ$ Push$(\theta_1)$.
\end{remark}

The fact that Push acts as a group morphism allows us to prove explicitly that any push map acts on a tri-arc purely by big flips, giving us the precise maps we have been looking for.

Before stating the following Lemma, we must define what we mean by $f(T_*)$ for a homeomorphism $f$ and a tri-arc $T_*$. If we let $T_*=\{a,b,c\}$, then $a,b,c$ are equivalence classes in $\A$ denoted by $a_{\A},b_{\A},c_{\A}$. For a function $f$, we define $f(T_*):=\{[f(a)]_{\A},[f(b)]_{\A},[f(c)]_{\A}\}$.

\begin{lem}\label{lem: Push, big flips}
For any tri-arc $T_*$ and $\theta\in\pi_1(\oncepuncturedtorusblack,\circ)$, the resulting tri-arc $T_*':=P_a(T_*)$ differs from $T_*$ by an even number of big flips for any $P_a \in \Push(\theta)$.
\end{lem}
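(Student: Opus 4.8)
The plan is to reduce the statement to two facts: first, that it suffices to treat the case of a generator $\theta$ of $\pi_1(\oncepuncturedtorusblack,\circ)$, since Push is a homomorphism (Remark~\ref{rmk: push morphism}) and composing an even number of big flips with an even number of big flips yields an even number; second, that a single generator acts on a tri-arc by exactly two big flips. For the first reduction, I would write $\theta$ as a word in a fixed pair of generators $\theta_1,\theta_2$ of $\pi_1(\oncepuncturedtorusblack,\circ)$, so that $\Push(\theta)$ is a composition of maps $\Push(\theta_i^{\pm 1})$; since being related by an even number of big flips is an equivalence relation preserved under composition (two even numbers sum to an even number), it is enough to handle $\theta = \theta_i$ and $\theta = \theta_i^{-1}$. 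Moreover $P_a$ and $P_{a^{-1}}$ are inverse up to isotopy, so if one acts by two big flips, so does the other (the inverse of a big flip on an arc is again a big flip on an arc, by Lemma~\ref{lem: twoflipID}).

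For the core computation, I would fix a tri-arc $T_* = \{a,b,c\}$ and, using the bijection with tri-pants and the punctured-square picture of Remark~\ref{rmk:punctured_square}, choose representatives $\hat a, \hat b$ cutting $\oncepuncturedtorus$ into a punctured square with $\hat c$ the diagonal (Remark~\ref{rmk:triplearcs}). One of the generators of $\pi_1(\oncepuncturedtorusblack,\circ)$ can be represented by a loop $\hat\theta$ based at $\circ$ that crosses exactly one of the three arcs, say $\hat c$, transversally once (and misses $\hat a, \hat b$); the other generator crosses a different arc. Pushing $\circ$ once around $\hat\theta$ drags the arc $\hat c$ across the puncture and back, and by inspecting the effect in the punctured square one sees directly that $\hat a, \hat b$ are fixed while $\hat c$ is carried to the arc obtained by composing two big flips on $c$ — equivalently, by Lemma~\ref{conj: cyclicperm}, $c \mapsto [b,a][\,\cdot\,]c$ accumulated twice, which is the $\Push$-action by a commutator. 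This is the step I expect to be the main obstacle: one must verify carefully, picture in hand, that the push of a generator across a single arc is precisely a ``there-and-back'' pair of big flips on that arc (not a single big flip, not a small flip), and that the two other arcs are genuinely unaffected up to isotopy. A clean way to package this is to exhibit $\Push(\theta_i)$ as a product of two point-pushing ``half-twists'' each realizing a big flip, or to compute its action on $\pi_1(\oncepuncturedtorus,\bullet)$ via the Birman exact sequence and match it against iterated applications of Lemma~\ref{conj: cyclicperm}.

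Once the generator case is established, the general case follows: for arbitrary $\theta = s_1 s_2 \cdots s_k$ with each $s_j$ a generator or its inverse, Remark~\ref{rmk: push morphism} gives $P_a \simeq P_{a_k} \circ \cdots \circ P_{a_1}$ with each $a_j \in s_j$, and each factor changes the current tri-arc by two big flips, so $T_*' $ differs from $T_*$ by $2k$ big flips, an even number. I would close by noting that the same argument shows the count is independent of the chosen representative $P_a \in \Push(\theta)$, since $\Push(\theta)$ is a single isotopy class and isotopic homeomorphisms induce the same map on $\A$, hence the same resulting tri-arc.
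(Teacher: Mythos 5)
Your first reduction (write $\theta$ as a word in two generators, use Remark~\ref{rmk: push morphism}, and note that evenness is preserved under composition) is exactly the paper's strategy. The gap is in your core computation for a single generator. You propose representing a generator of $\pi_1(\oncepuncturedtorusblack,\circ)$ by a loop based at $\circ$ that crosses exactly one of the three arcs, say $\hat c$, once and misses $\hat a$ and $\hat b$. No such loop exists: $T^2\setminus(\hat a\cup\hat b\cup\{\bullet\})$ is the open square of Remark~\ref{rmk:punctured_square}, an open disk, so any loop at $\circ$ disjoint from $\hat a\cup\hat b$ lies in that disk, is null-homotopic in $\oncepuncturedtorusblack$ (hence is not a generator), and crosses the diagonal $\hat c$ an even number of times. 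In the true picture the generators correspond, under $\pi_1(\oncepuncturedtorus,\bullet)\cong\pi_1(\oncepuncturedtorusblack,\circ)$, to loops parallel to $\hat a$ and to $\hat b$, and each such loop crosses the \emph{two} other arcs once apiece.

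Consequently the effect you describe is also wrong in kind, not just in picture: ``a there-and-back pair of big flips on the same arc'' is, by Lemma~\ref{lem: twoflipID}, the identity on the tri-arc, so your verification, if carried out, would force $\Push$ of a generator to fix every tri-arc — contradicting the actual computation (pushing along $\hat a$ sends $\{a,b,ab\}$ to $\{a,\,aba^{-1},\,a^{2}ba^{-1}\}$). What is really true, and what the paper proves, is that a generator push fixes one arc of the tri-arc and performs \emph{one} big flip on each of the other two arcs, giving two big flips in total; this is checked algebraically with the formula of Lemma~\ref{conj: cyclicperm} in each of the four cases $\psi\in\{\tilde\alpha^{\pm1},\tilde\beta^{\pm1}\}$ and compared with the pictures of the pushed arcs. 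Your parenthetical alternative (compute the action on $\pi_1(\oncepuncturedtorus,\bullet)$ and match it against Lemma~\ref{conj: cyclicperm}) is essentially the correct route; the primary geometric argument as written would fail. The closing observations (generator case gives $2k$ big flips for a word of length $k$; independence of the representative $P_a$ within the isotopy class) are fine once the generator case is repaired.
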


\begin{proof}
Let $T_*=\{a_{\A},b_{\A},c_{\A}\}$ with corresponding representatives $\hat{a},\hat{b},\hat{c}$. Choose orientations to obtain an oriented tri-arc $T_{\pi_1}=\{a,b,c\}$. Denote $\alpha:=a,\beta:=b\in\pi_1(T^2\setminus\{\circ\},\bullet)$ for simplicity. Note that $\pi_1(T^2\setminus\{\circ\},\bullet)\cong\pi_1(\oncepuncturedtorusblack,\circ)$, so we may find an isomorphism $\phi:\pi_1(T^2\setminus\{\circ\},\bullet)\to\pi_1(\oncepuncturedtorusblack,\circ)$ and let $\tilde{\alpha}:=\phi(\alpha),\tilde{\beta}:= \phi(\beta)$ be generators of $\pi_1(\oncepuncturedtorusblack,\circ)\cong\Z\ast\Z$ (Remark~\ref{rmk: fund grp generators} allows us to do this). Thus $\theta=\tilde{\alpha}^{n_1}\tilde{\beta}^{m_1}\cdots \tilde{\alpha}^{n_k}\tilde{\beta}^{m_k}$ for $(m_i)_{i=1}^k, (n_i)_{i=1}^k\subset\Z$. By Remark~\ref{rmk: push morphism}, we may assume that Push$(\theta)$ is the composition of finitely many maps of the form Push$(\psi)$, for $\psi\in\{\tilde{\alpha},\tilde{\beta},\tilde{\alpha}^{-1},\tilde{\beta}^{-1}\}$. We may cut along $\hat{a}$ and $\hat{b}$ in a way so that $\hat{c}$ is in our desired position of the obtained square representation. Without loss of generality, assume that $c=ab$ and thus our image is as in Figure~\ref{fig: oriented triarc}. We now claim that each push map $P\in$ Push$(\psi)$ will perform a sequence of two big flips on $T_*$. Note that, since $\psi\in\{\tilde{\alpha},\tilde{\beta},\tilde{\alpha}^{-1},\tilde{\beta}^{-1}\}$, the map $P$ will be isotopic in $\homeoplus$ to a push map $\tilde{P}\in\{P_{\hat{a}},P_{\hat{b}},P_{\hat{a}^{-1}},P_{\hat{b}^{-1}}\}$. Restricting this isotopy to the curves in the resulting oriented tri-arc $P(\hat{a}), P(\hat{b}), P(\hat{c})$ shows that $P(T_{\pi_1}) = \tilde{P}(T_{\pi_1})$ since the isotopy classes are the same. This allows us to check only the cases $P\in \{P_{\hat{a}},P_{\hat{b}},P_{\hat{a}^{-1}},P_{\hat{b}^{-1}}\}$. Figure~\ref{fig: twopush} shows the outcomes of each of these push maps. Using our formula for big flip proven in Lemma~\ref{conj: cyclicperm}, we may check each case for $\psi$. First, consider the case $\psi=\tilde{\alpha}$. We first perform a big flip on $b=a^{-1}c$, giving $b'=[c,a^{-1}]b=ca^{-1}c^{-1}ab=aba^{-1}$. We then perform a big flip on $c=b'a$ to get $c'=[a,b']c=aaba^{-1}a^{-1}ab^{-1}a^{-1}c=a^2ba^{-1}$, with the resulting tri-arc $[a,b',c']$. When $\psi = \tilde{\beta}$, we perform a big flip first on $c$ and then on $a$. Since $c = ab$, $c'=[b,a]a=bab^{-1}a^{-1}ab = ba$. Then, $a = b^{-1}c'$, so $a'=[c',b^{-1}]a=bab^{-1}a^{-1}b^{-1}ba=bab^{-1}$. Next, suppose $\psi = \tilde{\alpha^{-1}}$. We now perform a big flip on $c$, followed by $b$. We saw in the above case that $c'=ba$, so $b = c'a^{-1}$ and thus $b'=[a^{-1},c']b=a^{-1}baaa^{-1}b^{-1}b=a^{-1}ba$. Lastly, assume $\psi=\tilde{\beta}$. In this case, we perform a big flip on $a$ and then $c$. Since $a=cb^{-1}$, $a'=[b^{-1},c]a=b^{-1}abbb^{-1}a^{-1}a=b^{-1}ab$. Then, $c=ba'$, so $c'=[a',b]c=b^{-1}abbb^{-1}a^{-1}bb^{-1}ab=b^{-1}ab^2$. All cases are the same as in the images of Figure~\ref{fig: twopush}.
\end{proof}

\begin{figure}[htbp]
    \centering
    \begin{overpic}
    [scale=.7]{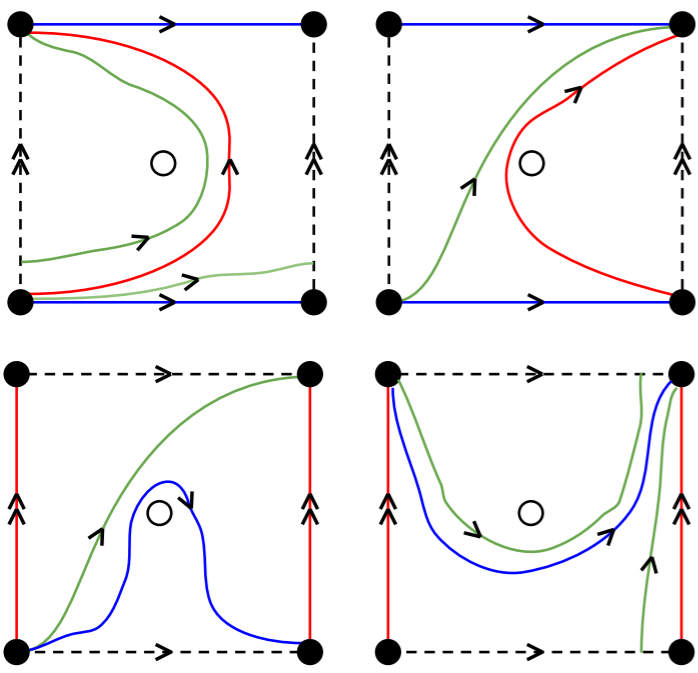}
    \end{overpic}
    \put(-220,115){Push along $\hat{a}$}
    \put(-92,115){Push along $\hat{a}^{-1}$}
    \put(-220,-10){Push along $\hat{b}$}
    \put(-92,-10){Push along $\hat{b}^{-1}$}
    \caption{The images of $\hat{a}$ (blue), $\hat{b}$ (red), and $\hat{c}$ (green).}
    \label{fig: twopush}
\end{figure}

The proof of this lemma also shows us that, given a tri-arc $T_*$, we may choose the class $\theta$ so that Push$(\theta)$ performs a big flip on excactly two homotopy classes in $T_*$. Because push maps have such nice behavior, we want to investigate further ways to classify push maps. Because Push is a morphism,
$$\Push(\theta^{-1})\circ\Push(\theta) = [id]_{\pmodtorus} $$
for all $\theta \in \pi_1(\oncepuncturedtorusblack,\circ)$. That is, push maps $P_a$ are undone by simply pushing along $a$ backwards. This makes us want to determine if push maps will all be isotopic to the identity in $\homeoplus$. However, we are not able to construct one in this fashion (by just pushing along $a^{-1}$), since all intermediate maps would need to fix $\circ$. Luckily, if we consider these homeomorphisms in the space Homeo$^+(\oncepuncturedtorusblack,\{\bullet\})$, we no longer need to fix $\circ$, allowing us to create an isotopy to the identity. To more formally describe this property, we are motivated to define a ``Forget" map.

\begin{defn}\label{defn: forget}
We define the map Forget$:\pmodtorus\to$ PMod$(\oncepuncturedtorusblack)$ as follows: let $[f]_{\pmodtorus}\in\pmodtorus$ and consider the representative $f$ as a map in Homeo$^+(\oncepuncturedtorusblack,\{\bullet\})$ with $f(\circ)=(\circ)$. Then Forget$([f]_{\pmodtorus}) = [f]_{\text{PMod}(\oncepuncturedtorusblack)}$, the isotopy class of $f$ in $\oncepuncturedtorusblack$. That is, this map ``forgets" the puncture $\circ$.
\end{defn}

With the process described above, we can see that Forget(Push$(\theta)$) should always be the identity element. If we take $P_a$, for $a\in\theta$, then this map will be isotopic to the identity in Homeo$^+(\oncepuncturedtorusblack,\{\bullet\})$ since $\circ$ need not be fixed by the intermediate homeomorphisms. Then, we can travel along $a^{-1}$, which will bring the images of all curves intersecting $a$ back to their pre-images, yielding the identity map. This implies that Image(Push) $\subseteq$ Ker(Forget). However, we wish to determine if push maps describe all homeomorphisms which are isotopic to the identity in Homeo$^+(\oncepuncturedtorusblack)$, a result that is not at all obvious to show. Luckily, \cite{MC} discusses construction of the Birman exact sequence,
$$1\to\pi_1(\oncepuncturedtorusblack,\circ)\overset{Push}{\longrightarrow}\pmodtorus\overset{Forget}{\longrightarrow}\text{PMod}(\oncepuncturedtorusblack)\to 1. $$
As proven in Section 4.2.3 of \cite{MC}, this sequence is exact (that is, Image(Push) = Ker(Forget). Within this source, we can also find a proof of our desired result. 

\begin{lem}\label{lem: push image}
Image(Push) $=$ Ker(Forget).
\end{lem}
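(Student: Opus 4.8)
The plan is to recognize Lemma~\ref{lem: push image} as precisely the exactness statement for the Birman exact sequence applied to the puncture $\circ$ on the once-punctured torus $\oncepuncturedtorusblack$, and to reduce its verification to the standard references rather than reproving it from scratch. The excerpt has already laid all the groundwork: the maps $\Push$ and $\mathrm{Forget}$ are defined, and the inclusion $\mathrm{Image}(\Push)\subseteq\mathrm{Ker}(\mathrm{Forget})$ has already been argued informally (a push $P_a$ along a loop $a$ at $\circ$ becomes isotopic to the identity once we forget $\circ$, by sliding $\circ$ back along $a^{-1}$). So the content that remains is the reverse inclusion $\mathrm{Ker}(\mathrm{Forget})\subseteq\mathrm{Image}(\Push)$, together with the injectivity of $\Push$ needed for the full exact sequence.

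First I would state explicitly that $\mathrm{PMod}(\oncepuncturedtorusblack)$ here plays the role of the mapping class group of the surface obtained by filling in the puncture $\circ$, i.e. the once-punctured torus $\oncepuncturedtorus$ (remembering only $\bullet$), so that the configuration space $\oncepuncturedtorusblack$ is the one-point configuration space of $\oncepuncturedtorus$ minus $\bullet$, whose fundamental group is $\pi_1(\oncepuncturedtorusblack,\circ)$. The Birman exact sequence (see \cite{birman1969}, or Theorem 4.6 and Section 4.2.5 of \cite{MC}) then asserts exactly
$$1\longrightarrow\pi_1(\oncepuncturedtorusblack,\circ)\overset{\Push}{\longrightarrow}\pmodtorus\overset{\mathrm{Forget}}{\longrightarrow}\mathrm{PMod}(\oncepuncturedtorusblack)\longrightarrow 1.$$
The hypotheses for the Birman exact sequence to hold in this form are that the surface with the extra puncture filled in (here $\oncepuncturedtorus$) is not a sphere with at most two punctures — which is clearly satisfied, since it has negative Euler characteristic and the relevant configuration space is aspherical with the stated $\pi_1$. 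Exactness at the middle term is precisely the assertion $\mathrm{Image}(\Push)=\mathrm{Ker}(\mathrm{Forget})$, which is the statement of the lemma; exactness at the left term gives injectivity of $\Push$, and surjectivity of $\mathrm{Forget}$ follows because any homeomorphism of $\oncepuncturedtorus$ fixing $\bullet$ can be isotoped to fix $\circ$ as well (extend by the isotopy extension theorem) and then represents a class in $\pmodtorus$ mapping to it.

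Concretely, the key steps in order are: (1) record the already-established inclusion $\mathrm{Image}(\Push)\subseteq\mathrm{Ker}(\mathrm{Forget})$ from the discussion preceding the lemma; (2) invoke the Birman exact sequence for filling in the puncture $\circ$ on $\oncepuncturedtorusblack$, citing \cite{birman1969} and Section 4.2 of \cite{MC}, after checking the mild hypothesis that the filled surface $\oncepuncturedtorus$ is not a low-complexity sphere; (3) read off exactness at the middle, which gives the reverse inclusion $\mathrm{Ker}(\mathrm{Forget})\subseteq\mathrm{Image}(\Push)$ and hence the desired equality. I would keep the proof short, essentially a pointer to the literature with the verification that our $\Push$ and $\mathrm{Forget}$ maps agree with the maps in the Birman sequence — this identification (matching the topological ``point-pushing'' map of \cite{MC} with our $\Push$ of Definition~\ref{defn: push}, and the ``forgetful'' map with our $\mathrm{Forget}$ of Definition~\ref{defn: forget}) is the only genuinely necessary check, and it is immediate from the definitions.

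The main obstacle is not mathematical depth but bookkeeping: one must be careful that the roles of the two punctures $\bullet$ and $\circ$ are the ones the Birman sequence expects — here $\circ$ is the puncture being pushed/forgotten and $\bullet$ is the fixed basepoint that persists — and that ``$\mathrm{PMod}$'' in our notation matches the ``pure'' mapping class group in \cite{MC} (homeomorphisms fixing each puncture individually), which is exactly why the sequence has $\pi_1$ rather than a braid group on the left. Once these identifications are pinned down, the lemma follows directly from the cited exactness, with no further argument needed.
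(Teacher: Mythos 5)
Your proposal is correct and follows essentially the same route as the paper: the paper likewise establishes $\mathrm{Image}(\Push)\subseteq\mathrm{Ker}(\mathrm{Forget})$ in the preceding discussion and then obtains the equality by citing exactness of the Birman exact sequence for forgetting the puncture $\circ$ (Section 4.2 of \cite{MC}). Your additional care in matching the $\Push$ and $\mathrm{Forget}$ maps with the literature and in noting the hypothesis on the filled-in surface (the once-punctured torus, which has negative Euler characteristic) is a harmless refinement of the same citation-based argument.
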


Thus, push maps do describe all maps in $\pmodtorus$ which are isotopic to the identity in Homeo$^+(\oncepuncturedtorusblack,\{\bullet\})$. While this result is not only a nice characterization of push maps, it also gives us another way to locate then. Since push maps are difficult to explicitly construct, it is useful to study tools which allow us to ``find" push maps within $\pmodtorus$. Lemma~\ref{lem: fund grp identity} gives us one such tool. Note that the proof of the lemma's statement also depends on a result from \cite{MC}. There is an isomorphism $\sigma:\text{PMod}(\oncepuncturedtorusblack)\to\text{SL}(2,\Z)$ defined as follows: 

First, take any $[f]\in\text{PMod}(\oncepuncturedtorusblack)$ and a representative $f$. By definition of PMod$(\oncepuncturedtorusblack)$, $f$ is an isomorphism of $T^2$ and thus induces an isomorphism $f_*:\Z\times\Z\to\Z\times\Z$. Note that group of isomorphisms on $\Z\times\Z$ is itself isomorphic to SL$(2,\Z)$ by taking an isomorphism $g$ and constructing a matrix $\begin{pmatrix}
a & b \\
c & d
\end{pmatrix}$ with $g(1,0)=(a,c)$ and $g(0,1)=(b,d)$. We apply this process to $f_*$ in order to find the matrix $\sigma[f]$. The proof that this map is an isomorphism is discussed in Section 2.2.4 of \cite{MC}.

\begin{lem}\label{lem: fund grp identity}
If a map $\phi\in\homeoplus$ has induced group morphism $\phi_*:\pi_1(T^2,\bullet)\to\pi_1(T^2,\bullet)$ that is the identity, then $[\phi]_{\text{PMod}(\oncepuncturedtorusblack)} =$ Push$(\theta)$ for some $\theta\in\pi_1(\oncepuncturedtorusblack,\circ)$.
\end{lem}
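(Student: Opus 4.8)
The plan is to use the Birman exact sequence to identify $[\phi]$ with an element of the kernel of Forget, and then invoke Lemma~\ref{lem: push image}. First I would observe that the hypothesis is about $\phi_* : \pi_1(T^2,\bullet) \to \pi_1(T^2,\bullet)$, the map induced on the fundamental group of the \emph{closed} torus. By the construction of the isomorphism $\sigma : \text{PMod}(\oncepuncturedtorusblack) \to \text{SL}(2,\Z)$ recalled just before the statement, the matrix $\sigma([\phi]_{\text{PMod}(\oncepuncturedtorusblack)})$ is precisely the matrix representing $\phi_* : \Z \times \Z \to \Z \times \Z$ in the standard basis. Since $\phi_*$ is the identity on $\pi_1(T^2,\bullet) \cong \Z \times \Z$, this matrix is the identity matrix, and because $\sigma$ is an isomorphism we conclude that $[\phi]_{\text{PMod}(\oncepuncturedtorusblack)}$ is the trivial element of $\text{PMod}(\oncepuncturedtorusblack)$.

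Next I would bring in the Birman exact sequence
$$1 \to \pi_1(\oncepuncturedtorusblack,\circ) \overset{\Push}{\longrightarrow} \pmodtorus \overset{\text{Forget}}{\longrightarrow} \text{PMod}(\oncepuncturedtorusblack) \to 1.$$
The class $[\phi]_{\pmodtorus}$ makes sense because $\phi \in \homeoplus$ fixes both punctures, and by Definition~\ref{defn: forget}, $\text{Forget}([\phi]_{\pmodtorus}) = [\phi]_{\text{PMod}(\oncepuncturedtorusblack)}$, which we have just shown is trivial. Hence $[\phi]_{\pmodtorus} \in \text{Ker}(\text{Forget})$. By Lemma~\ref{lem: push image}, $\text{Ker}(\text{Forget}) = \text{Image}(\Push)$, so there exists $\theta \in \pi_1(\oncepuncturedtorusblack,\circ)$ with $[\phi]_{\pmodtorus} = \Push(\theta)$, which is the desired conclusion (with the statement's ``$[\phi]_{\text{PMod}(\oncepuncturedtorusblack)}$'' understood as $[\phi]_{\pmodtorus}$).

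The main point requiring care — and the step I expect to be the genuine obstacle — is the first one: correctly relating the hypothesis ``$\phi_*$ on $\pi_1(T^2,\bullet)$ is the identity'' to the statement ``$[\phi]$ is trivial in $\text{PMod}(\oncepuncturedtorusblack)$.'' One must check that the homotopy class of a homeomorphism of the closed torus fixing a marked point is detected by its action on $\pi_1(T^2)$ (equivalently on $H_1(T^2)$), i.e. that $\text{PMod}(\oncepuncturedtorusblack) \hookrightarrow \text{SL}(2,\Z)$ via $\sigma$ is injective and that $\sigma([\phi])$ is exactly the matrix of $\phi_*$. This is the content of the change-of-coordinates and Alexander-method discussion in \cite{MC} (Section 2.2.4), so it can be cited; the rest of the argument is a formal diagram chase through the Birman sequence using Lemma~\ref{lem: push image}.
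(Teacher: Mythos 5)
Your proposal is correct and follows essentially the same route as the paper: compute $\sigma([\phi]_{\text{PMod}(\oncepuncturedtorusblack)})$ from $\phi_*$ to see it is the identity matrix, conclude triviality of the class via the isomorphism $\sigma$, and then chase through the Birman sequence using $\text{Ker(Forget)} = \text{Image(Push)}$ (Lemma~\ref{lem: push image}) to produce $\theta$ with $[\phi]_{\pmodtorus} = \Push(\theta)$. Your parenthetical reading of the conclusion as a statement about $[\phi]_{\pmodtorus}$ matches what the paper's proof actually establishes.
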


\begin{proof}
Note that, by definition of $\homeoplus$, $\phi$ is a homeomorphism of $T^2$ which fixes $\bullet$, and is thus an element of Homeo$^+(\oncepuncturedtorusblack,\{\bullet\})$. By appealing to the construction of $\sigma$ defined above, $\sigma([\phi]_{\text{PMod}(\oncepuncturedtorusblack)}) = \begin{pmatrix}
a & b \\
c & d
\end{pmatrix}$, where $(a,c) = \phi_*(1,0)$ and $(b,d)=\phi_*(0,1)$. However, since $\phi_*$ is the identity, $\sigma([\phi]_{\text{PMod}(\oncepuncturedtorusblack)})=\begin{pmatrix}
1 & 0 \\
0 & 1
\end{pmatrix}$. Because $\sigma$ is group isomorphism which maps $[\phi]_{\text{PMod}(\oncepuncturedtorusblack)}$ to the identity element, $[\phi]_{\text{PMod}(\oncepuncturedtorusblack)}$ must be the identity element of $\text{PMod}(\oncepuncturedtorusblack)$. Note now that by definition of the Forget map, $$[\phi]_{\text{PMod}(\oncepuncturedtorusblack)}=\text{Forget}([\phi]_{\pmodtorus}),$$ which implies that
$$[\phi]_{\pmodtorus} \in \text{Ker(Forget)} = \text{Image(Push)}.$$
That is, there exists a $\theta\in\pi_1(\oncepuncturedtorusblack,\circ)$ so that $[\phi]_{\pmodtorus}=$ Push$(\theta)$.
\end{proof}

We then reach yet another description of push maps: the elements of $\pmodtorus$ whose representatives all induce the identity morphism on $\pi_1(T^2,\bullet)$. However, it is still not easy to find many homeomorphisms of the torus which act as identities on homotopy classes. In fact, we are not used to defining homeomorphisms based on their effect on curves at all. For this, we appeal again to \cite{MC} in order to define homeomorphisms based on mappings of curves, giving us the following result.

\begin{lem}[Change of Coordinates Principle for Pairs Intersecting Once]\label{lem: change of coords}
Suppose that $a,b$ and $a',b'$ are pairs of simple closed curves in surface $S$ which intersect exactly once. There exists an orientation-preserving homeomorphism $\phi:S\to S$ such that $\phi(a)=a'$ and $\phi(b)=b'$.
\end{lem}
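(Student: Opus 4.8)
The plan is to reduce everything to a single ``model'' configuration by cutting the surface open along the given pair of curves. First I would observe that since $a$ and $b$ intersect exactly once, a regular neighborhood $N$ of $a \cup b$ is a one-holed torus: cutting $S$ along $a$ and then along the image of $b$ produces a single polygon (a square with two pairs of opposite sides identified in the standard way, exactly as in Remark~\ref{rmk:punctured_square} but without the puncture), so $N$ is a compact surface of genus $1$ with one boundary component, and the isotopy class of the pair $(a,b)$ inside $N$ is unique up to orientation-preserving homeomorphism of $N$. Doing the same for $a', b'$ gives a second one-holed torus $N' \subset S$ and a homeomorphism $\psi_0 : N \to N'$ with $\psi_0(a) = a'$, $\psi_0(b) = b'$ (one builds $\psi_0$ directly on the square models, then checks it descends through the side identifications). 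The main content is then to promote $\psi_0$ to a homeomorphism of all of $S$.

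The key step is the extension across the complement. Because $a \cup b$ fills $N$ and $\partial N$ is a single curve, the complementary subsurface $S \setminus \mathrm{int}(N)$ and $S \setminus \mathrm{int}(N')$ are each compact surfaces with one boundary circle, and they are homeomorphic to one another: indeed $\chi(S \setminus \mathrm{int}(N)) = \chi(S) - \chi(N) = \chi(S) + 1$ and the number of boundary components plus punctures is determined by $S$, so the classification of surfaces applies; moreover the two complements are glued to $N, N'$ respectively along a single circle, and any homeomorphism of a circle extends (after an isotopy) to either boundary identification. Thus I would extend $\psi_0 : \partial N \to \partial N'$ over the complement using the classification of surfaces (this is where one invokes that $S$ is compact, orientable, connected, and that punctures in the complement can be matched up), obtaining a homeomorphism $\phi : S \to S$ restricting to $\psi_0$ on $N$; in particular $\phi(a) = a'$ and $\phi(b) = b'$. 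If the resulting $\phi$ happens to be orientation-reversing, post-compose with an orientation-reversing homeomorphism of $S$ fixing $N'$ setwise and preserving the pair $(a', b')$ (available because the unoriented pair in the one-holed torus admits an orientation-reversing self-homeomorphism), restoring orientation-preservation.

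The main obstacle I anticipate is purely bookkeeping about the complementary pieces: one must be sure that $S \setminus \mathrm{int}(N)$ and $S \setminus \mathrm{int}(N')$ really are homeomorphic \emph{with matching puncture data}, and that the gluing-circle identifications can be arranged to agree, so that the Alexander-type extension goes through. For the surface $\twicepuncturedtorus$ that is our focus this is immediate — the complement of a one-holed-torus neighborhood of a once-intersecting pair is a twice-punctured disk in both cases — and for the general statement it follows from the standard classification of compact surfaces together with the fact that mapping classes of surfaces with boundary surject onto homeomorphisms of the boundary. In the write-up I would simply cite the classification of surfaces and the change-of-coordinates discussion in Section~1.3 of \cite{MC}, since the argument there is exactly this cut-and-reglue scheme.
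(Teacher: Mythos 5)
Your proposal is correct and follows essentially the same route the paper relies on: the paper offers no proof of its own for this lemma, deferring to Sections 1.3.1--1.3.2 of \cite{MC}, and your scheme (regular neighborhood of $a\cup b$ is a one-holed torus, compare the complements via the classification of surfaces, reglue along the boundary circle, and fix orientation at the end) is exactly the change-of-coordinates argument given there. The only hypothesis worth making explicit is that the single intersection is transverse (equivalently $i(a,b)=1$), since that is what forces the neighborhood to be a one-holed torus rather than a pair of pants; this matches the paper's intended use with curves in minimal position.
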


The proof of this statement is discussed in Sections 1.3.1-1.3.2 of \cite{MC}. The change of coordinates principle gives us a nice way to find homeomorphisms in $\homeoplus$ that act as we wish on curves.

Note that for this proof, we will be considering both homotopy classes of unoriented arcs in $\A$ and homotopy classes of oriented arcs in the fundamental group. For the sake of notation, we will notate homotopy classes in $\A$ by $a_{\A}$, with homotopy classes in the fundamental group being notated simply by $a$. Further, it will make things easier if we define a new set
$$\A'=\faktor{\{a:[0,1]\to T^2: a\text{ unoriented essential simple arc with } a(0)=a(1)=\bullet\}}{\simeq}$$
which only differs from $\A$ by considering homotopy of arcs in $T^2$ instead of $T^2\setminus\{\circ\}$. We will denote elements of $\A'$ by $a_{\A'}$. For $a\in\A$, we will be considering the homotopy class $[i(\hat{a})]_{\A'}$, where $i$ is the inclusion map $\oncepuncturedtorus\hookrightarrow T^2$ and $\hat{a}$ is a representative of $a$. We will let $i(a)_{\A'}$ denote this homotopy class, where $i(a)$ denotes an oriented homotopy class in $\pi_1(T^2,\bullet)$. However, for a representative $\hat{a}\in a$, $i(\hat{a})\subset T^2$ still denotes the image of $\hat{a}$ in $T^2$.

\begin{manualtheorem}{\ref{thm: fibers connected}}
For any $v \in \fareydual$, the fiber $\pi^{-1}(v)$ is connected.
\end{manualtheorem}

\begin{proof}
Let $v\in \fareydual$, and assume $T,T'\in\tripantsgraph$ such that $T,T' \in \pi^{-1}(v)$. Let $T_*=\{a_{\A},b_{\A},c_{\A}\}$, $T_*'=\{a'_{\A},b'_{\A},c'_{\A}\}$ be the corresponding tri-arcs. We will prove that there is a finite sequence of big flips separating $T_*$ and $T_*'$, implying the existence of a sequence of edges in $\pi^{-1}(v)$ connecting the vertex $T_*$ to $T_*'$. Choose representatives $\hat{a},\hat{b},\hat{c},\hat{a}',\hat{b}',\hat{c}'$ that realize the tri-arcs $T_*$ and $T_*'$.  Note first that the hypothesis implies that
$$i(a)_{\A'}=i(a')_{\A'},\:\:i(b)_{\A'}=i(b')_{\A'},\;\;i(c)_{\A'}=i(c')_{\A'}, $$
up to relabeling. Applying Lemma~\ref{lem: change of coords} for $S=T^2\setminus\{\circ\}$, we get a homeomorphism $\phi\in\homeoplus$ such that $\phi(\hat{a})=\hat{a}'$ and $\phi(\hat{b})=\hat{b}'$. Note that $\phi:T^2\to T^2$ fixes $\circ$, so $\psi:=\phi|_{\oncepuncturedtorus}$ is an orientation-preserving homeomorphism which fixes $\bullet$ (since it sends the intersection point of $\hat{a},\hat{b}$ to the intersection point of $\hat{a}',\hat{b}'$) and has the property $\phi\circ i\equiv i\circ\psi$. Thus, $\psi(\hat{a})=\phi(\hat{a})=\hat{a}'$ and $\psi(\hat{b})=\phi(\hat{b})=\hat{b}'$. Further, these maps generate isomorphisms $\phi_*:\pi_1(T^2,\bullet)\to\pi_1(T^2,\bullet)$ and $\psi_*:\pi_1(\oncepuncturedtorus,\bullet)\to\pi_1(\oncepuncturedtorus,\bullet)$. In order to study these isomorphisms, we must choose orientations to obtain an orientated tri-arc $T_{\pi_1}=\{a,b,c\}\subset\pi_1(\oncepuncturedtorus,\bullet)$, which gives us natural orientations for the inclusions of these classes $i(a),i(b),i(c)\in\pi_1(T^2,\bullet)$. Note that we can define orientations for $a',b',c'$ (and thus $i(a'),i(b'),i(c')$) by composing parameterizations of these oriented curves with $\psi$. It follows that $i(a)=i(a'),i(b)=i(b')$, and $i(c)=i(c')$ as elements of $\pi_1(T^2,\bullet)$. We first study $\phi_*$ and choose a generating set $i(a), i(b)$ of $\pi_1(T^2,\bullet)\cong\Z\times\Z$. Because $$\phi(i(a))=[\phi(i(\hat{a}))]=i(a')\text{  and  }\phi(i(b))=[\phi(i(\hat{b}))]=i(b'),$$ we see that $\phi_*$ fixes a generating set and is thus the identity map on $\pi_1(T^2,\bullet)$. Applying Lemma \ref{lem: fund grp identity} and then Lemma \ref{lem: Push, big flips} to $\phi$  tells us that $$\phi(T_*)=\{a'_{\A},b'_{\A},\phi(c)_{\A}\}=\{a'_{\A},b'_{\A},\psi(c)_{\A}\}=\psi(T_*)$$ differs from $T_*$ be an even number of big flips. Note that $\psi(T_*)$ is necessarily a tri-arc since it is the image of a tri-arc by a homeomorphism. Note now that $\phi_*$ being the identity implies that $\phi(i(c))_{\A'}=i(c')_{\A'}$. By assumption $\phi(i(\hat{c}))=i(\psi(\hat{c}))$, so we then have $$i(\psi(c))_{\A'}=i(c')_{\A'}.$$ Note that since $\psi(T_*)$ is a tri-arc, this equation implies that $\pi(\psi(T_*)) = \pi(T_*')$. Moreover, if $\psi(c)_{\A}\neq c'_{\A}$ then $\psi(c)_{\A}$ differs from $c'_{\A}$ by either a small or a big flip. By Definition~\ref{defn: pitilde}, a small flip is sent by $\pi$ onto an edge of $\mathcal{F}^*$ connecting distinct vertices, so the former case implies that $\pi(\psi(T_*))\neq \pi(T_*')$, contradiction. Thus either $\psi(c)_{\A} = c'_{\A}$ or $\psi(c)_{\A}$ is a big flip of $c'_{\A}$. In either case, $T_*$ differs by $T_*'$ by a finite sequence of big flips.
\end{proof}

With connectedness of the fibers, we have tackled the first part of the proof. We are now able to prove Theorem~\ref{thm: tri-pants graph connected} by appealing to connectedness of $\fareydual$.

\begin{manualtheorem}{\ref{thm: tri-pants graph connected}}
The tri-pants graph $\tripantsgraph$ is connected.
\end{manualtheorem}

\begin{proof}
Let $T,T'\in\tripantsgraph$ be two tri-pants. We will prove that there is a sequence of edges in $\tripantsgraph$ which connects $T$ to $T'$. Let $v=\pi(T), v' = \pi(T')\in\fareydual$ be two vertices. Since $\fareydual$ is connected, there is a sequence of vertices $v_0,v_1,\dots,v_k,v_{k+1}$ (where $v=v_0, v_{k+1}=v'$) and a sequence of edges $e_0,e_1,\dots,e_k$ both contained in $\fareydual$ and so that $e_i$ connects $v_i$ to $v_{i+1}$ for all $i\in\{0,1,\dots,k\}$. Because $\pi$ is surjective by Lemma~\ref{lem: pisurj}, for each edge $e_i$ in $\tripantsgraph$, there is at least one corresponding edge $w_i \in \pi^{-1}(e_i)\subset\tripantsgraph$ which connects some tri-pant $T_i\in\pi^{-1}(v_i)$ to another $T_{i+1}\in\pi^{-1}(v_{i+1})$. Starting with $T\in\pi^{-1}(v_0)$, we find an edge $w_0\in\pi^{-1}(e_0)$ which connects some $T_0\in\pi^{-1}(v_0)$ to a $T_1\in\pi^{-1}(v_1)$. By Theorem~\ref{thm: fibers connected}, there is a sequence of edges in the fiber $\pi^{-1}(v_0)$ which connects $T$ to $T_0$. Adding the edge $w_0$ at the end of this sequence gives a path from $T$ to $T_1$. Continuing with this pattern, after adding the edge $w_k$ we have found a path in $\tripantsgraph$ from $T$ to some $T_{k+1}\in\pi^{-1}(v')$. Because $T' \in \pi^{-1}(v')$ as well, Theorem \ref{thm: fibers connected} implies the existence of a path within the fiber $\pi^{-1}(v')$ connecting $T_{k+1}$ to $T'$. Therefore, there is a path in $\tripantsgraph$ connecting $T$ to $T'$.
\end{proof}

\subsubsection{Infinite Diameter}
We will again use the connection between the tri-pants graph and the Farey graph to prove Theorem~\ref{thm: infinitediameter}. 
\begin{manualtheorem}{\ref{thm: infinitediameter}}
The tri-pants graph $\tripantsgraph$ has infinite diameter.
\end{manualtheorem}
\begin{proof}
Recall that $\mathcal{F}^*$ is a connected tree with infinite diameter. Thus, for any $n\in\mathbb{Z}_{\ge0}$ there exists vertices $v_1, v_2\in \mathcal{F}^*$ such that $d_{\mathcal{F}^*}(v_1, v_2)\geq n$. Let $T\in\tilde{\pi}^{-1}(v_1)$ and $T'\in\tilde{\pi}^{-1}(v_2)$. We claim that any path $\gamma$ connecting $T$ and $T'$ will have length at least $n$. For the sake of contradiction, assume that there exists some path $\delta$ between $T$ and $T'$ with length less then $n$. Then $\pi(\delta)$ provides a map between $v_1$ and $v_2$ with length less than or equal to $n$ since $\pi$ sends edges in $\tripantsgraph$ to either vertices or edges in $\fareydual$. If this length is less than $n$, we contradict the statement that $\mathcal{F}^*$ has infinite diameter. Thus, for any $n\in\mathbb{Z}_{\ge0}$ there exists $T,T'\in\tripantsgraph$ with distance at least $n$, hence the tri-pants graph has infinite diameter. 
\end{proof}

\section*{Conclusion}
Throughout this paper, we have investigated the properties of homotopy classes of essential simple closed curves on $\twicepuncturedtorus$ which form pants decompositions, with a large focus on how different decompositions can relate to one another. In this study, the object of a tri-pant arose when considering triples of pants decompositions which pairwise intersect once. After presenting the primary definition of a tri-pant, Definition~\ref{def: tripantsI}, we were able to prove equivalence with a second definition: a maximal collection of homotopy classes of essential simple closed curves on $\twicepuncturedtorus$ which pairwise intersect at most once. Since tri-pants still proved difficult to picture on the twice-punctured torus, we found it easier to relate these structures to collections of homotopy classes of essential simple arcs on the once-punctured torus which intersect only at the base point and do not pairwise bound cylinders. Luckily for us, we were able to prove a one-to-one correspondence between tri-pants and these new objects, tri-arcs. Because we were able to view tri-arcs on the punctured square representation of $\oncepuncturedtorus$, this correspondence gave a great pictorial view of tri-pants. 

In fact, to our delight, we could then also relate these new structures to the Farey graph $\mathcal{F}$ by utilizing the map $i_*:\pi_1(\oncepuncturedtorus,\bullet)\to\pi_1(T^2,\bullet)$. By mapping the three representative arcs of a tri-arc to three corresponding simple closed curves on $T^2$, we were able to relate a tri-pant to a triangle on the Farey graph, or a vertex on its dual $\fareydual$. Mapping between sets of tri-pants and the Farey graph motivated us to create a graph whose vertices are tri-pants. However, as it was unclear when vertices of two tri-pants should be connected by an edge, we introduced elementary moves. Because two adjacent triangles in $\mathcal{F}$ correspond to triples which shared two of the same curves, it made sense for two tri-pants to be connected by an edge if they shared two pants decompositions. In the process of defining elementary moves, we also introduced big and small flips to explain elementary moves between two tri-pants as viewed on their associated tri-arcs. With this understanding, we could then define the tri-pants graph $\tripantsgraph$.

In our study of $\tripantsgraph$, we were able to formally define their association with $\fareydual$ by introducing the map $\pi:\tripantsgraph\to\fareydual$. Although $\pi$ is not a bijection, it is a surjective map. Further, studying $\pi$ allowed us to prove, among other results, that every vertex in the graph has valence equal to $9$ and that $\tripantsgraph$ is connected, is not a tree, and has infinite diameter. We hope that this paper provides a robust exposition to the tri-pants graph, although many questions pertaining to $\tripantsgraph$ remain unanswered. In particular, we conjecture that
each fiber of the map $\pi$ is isomorphic to the dual of the Farey graph $\fareydual$. There is a fundamental similarity between big flips and diagonal exchanges on triangulations of $\oncepuncturedtorus$, which we believe may prove that each fiber of the map $\pi$ is infinite.

Finally, we hope to inspire similar work on other ``small" surfaces. For instance, does such a graph structure exist atop the thrice-punctured torus, and if so, is there a relationship between this construction and the tri-pants graph? This area of study is relatively unexplored, and we eagerly await future findings.
\emergencystretch=1em

\bibliographystyle{hamsplain}
\bibliography{biblio}
\end{document}